% MPDDnmedoc.tex V3.0, 13 May 2010
\documentclass[times]{nmeauth}
\pdfoutput=1
\usepackage{moreverb}
\usepackage{amsthm}
\usepackage[T1]{fontenc}
\usepackage{color}
\usepackage{array}
\usepackage{textcomp}
\usepackage{amsthm}
\usepackage{amsbsy}
\usepackage{amstext}
\usepackage{amssymb}
\usepackage{graphicx}
\usepackage{setspace}
\usepackage{cite}
\usepackage{threeparttable}

\usepackage[ruled]{algorithm}
\usepackage{algpseudocode}
\usepackage{helvet}
\usepackage{float}
\usepackage[unicode,bookmarksopen,bookmarksnumbered,citecolor=black,urlcolor=black]{hyperref}
%%%%%%%%%%%%%%%%%%%%%%%%%%%%%% Textclass specific LaTeX commands.
\theoremstyle{plain}
\newtheorem{thm}{\protect\theoremname}
  \theoremstyle{remark}
  \newtheorem{rem}[thm]{\protect\remarkname}
  \theoremstyle{plain}
  \newtheorem{prop}[thm]{\protect\propositionname}
  \theoremstyle{plain}
  
  \theoremstyle{plain}
  \theoremstyle{plain}
  
  \theoremstyle{plain}

  \providecommand{\corollaryname}{Corollary}
  \providecommand{\lemmaname}{Lemma}
  \providecommand{\propositionname}{Proposition}
  \providecommand{\remarkname}{Remark}
\providecommand{\theoremname}{Theorem}

\algrenewcommand\alglinenumber[1]{
    {\sf\footnotesize#1}}
\algrenewcommand\algorithmicrequire{\textbf{Precondition:}}
\algrenewcommand\algorithmicensure{\textbf{Postcondition:}}

\newcommand\BibTeX{{\rmfamily B\kern-.05em \textsc{i\kern-.025em b}\kern-.08em
T\kern-.1667em\lower.7ex\hbox{E}\kern-.125emX}}

\begin{document}
\runningheads{S. Rahman and X. Ren}{Stochastic Sensitivity Analysis}

\title{Novel Computational Methods for High-Dimensional Stochastic Sensitivity Analysis}

\author{Sharif Rahman\corrauth\footnotemark[2], Xuchun Ren\footnotemark[3]}

\address{College of Engineering, The University of Iowa, Iowa City, Iowa 52242, U.S.A.}

\corraddr{Sharif Rahman, Department of Mechanical \& Industrial Engineering, The University of Iowa, Iowa City, Iowa 52242, U.S.A. E-mail:  rahman@engineering.uiowa.edu}

\cgsn{U.S. National Science Foundation}{CMMI-0969044}

\begin{abstract}
This paper presents three new computational methods for calculating
design sensitivities of statistical moments and reliability of high-dimensional
complex systems subject to random input. The first method represents
a novel integration of polynomial dimensional decomposition (PDD)
of a multivariate stochastic response function and score functions.
Applied to the statistical moments, the method provides mean-square convergent analytical expressions of design sensitivities of the
first two moments of a stochastic response. The second and third methods,
relevant to probability distribution or reliability analysis, exploit
two distinct combinations built on PDD: the PDD-SPA method, entailing
the saddlepoint approximation (SPA) and score functions; and the PDD-MCS
method, utilizing the embedded Monte Carlo simulation (MCS) of the
PDD approximation and score functions. For all three methods developed,
the statistical moments or failure probabilities and their design
sensitivities are both determined concurrently from a single stochastic
analysis or simulation. Numerical examples, including a 100-dimensional
mathematical problem, indicate that the new methods developed provide
not only theoretically convergent or accurate design sensitivities,
but also computationally efficient solutions. A practical example
involving robust design optimization of a three-hole bracket illustrates
the usefulness of the proposed methods.
\end{abstract}

\keywords{dimension reduction; orthogonal polynomials; polynomial dimensional decomposition; robust design optimization; saddlepoint approximation; score function}

\maketitle

\footnotetext[2]{Professor.}
\footnotetext[3]{Graduate student.}
%\vspace{-6pt}

\section{Introduction}
%\vspace{-2pt}
Stochastic sensitivity analysis plays a central role in robust and
reliability-based design optimizations (RDO and RBDO) of complex systems.
For calculating design sensitivities of a stochastic response of interest,
the finite-difference method \cite{lecuyer94} constitutes the most
general and straightforward approach, but it mandates repeated stochastic
analyses for different instances of design variables. Therefore, for
practical design optimizations, the finite-difference method is very
expensive, if not prohibitive. The two other prominent methods, the
infinitesimal perturbation analysis \cite{glasserman91} and the score
function method \cite{rubinstein93}, have been mostly viewed as competing
methods, where both stochastic responses and sensitivities can be
obtained from a single stochastic simulation. However, there are additional
requirements of regularity conditions, in particular smoothness of
the performance function or the probability measure. Both methods,
when valid, are typically employed in conjunction with crude Monte
Carlo simulation (MCS). Unfortunately, for optimization of complex
mechanical systems, where stochastic response and sensitivity analyses
are required at each design iteration, even a single MCS is impractical,
as each deterministic trial of simulation often requires expensive
finite-element or other numerical calculations \cite{rahman09}.

The dimensional decomposition is a finite, hierarchical, and convergent
expansion of a multivariate output function in terms of its input
variables with increasing dimensions \cite{hoeffding48,efron81,sobol03,rahman12}.
The decomposition ameliorates the curse of dimensionality \cite{bellman57}
to some extent by developing an input-output behavior of complex systems
with low effective dimensions \cite{caflisch97}, wherein the degrees
of interactions between input variables attenuate rapidly or vanish
altogether. Based on a coupling between dimensional decomposition
and score function, Rahman \cite{rahman09} developed an efficient
method for calculating design sensitivities of stochastic systems.
The method, which sidesteps the need for crude MCS, is capable of
estimating both the probabilistic response and its sensitivity from
a single stochastic analysis without requiring performance function
gradients. Another related method, proposed by Huang and Zhang \cite{huang12},
combines Daniel's saddlepoint approximation (SPA) \cite{daniels54}
with Xu and Rahman's dimension-reduction integration technique \cite{xu04}
to perform stochastic sensitivity analysis. In their method, the sensitivity
of reliability through SPA is connected to the sensitivities of moments
of the performance function. To calculate the sensitivities of moments,
the kernel functions, similar to the score functions, are used with
dimension-reduction integration, which is the same as the dimensional
decomposition exploited by Rahman \cite{rahman09}. Nonetheless, Huang
and Zhang's method offers a few additional advantages: the tail probabilistic
characteristics of a stochastic response, if they closely follow the
exponential family of distributions, are accurately estimated by SPA;
furthermore, the embedded MCS of Rahman \cite{rahman09} for calculating
sensitivity of reliability is avoided. It is important to clarify
that the ``dimensional decomposition'' and ``dimension-reduction''
concepts invoked by these two sensitivity methods are the same as
the referential dimensional decomposition (RDD) formally presented
in latter works \cite{rahman11,rahman12}. Therefore, both methods
essentially employ RDD for multivariate function approximations, where
the mean values of random input are treated as the reference point
\cite{xu04}. The developments of these methods were motivated by
the fact that RDD requires only function evaluations, as opposed to
high-dimensional integrals required by another dimensional decomposition,
known as the ANOVA dimensional decomposition \cite{efron81} or its
polynomial version, the polynomial dimensional decomposition (PDD)
\cite{rahman08,rahman09b}. However, a recent error analysis \cite{rahman12}
reveals sub-optimality of RDD approximations, meaning that an RDD
approximation, regardless of how the reference point is chosen, cannot
be better than an ANOVA approximation for identical degrees of interaction.
The analysis also finds ANOVA approximations to be exceedingly more
precise than RDD approximations at higher-variate truncations. Therefore,
a more precise function decomposition, such as the PDD \cite{rahman08,rahman09b},
which inherits all desirable properties of the ANOVA dimensional decomposition,
should be employed for sensitivity analysis.

This paper presents three new computational methods for calculating
design sensitivities of statistical moments and reliability of high-dimensional
complex systems subject to random input. The first method represents
a novel integration of PDD of a multivariate stochastic response function
and Fourier-polynomial expansions of score functions associated with
the probability measure of the random input. Applied to the statistical
moments, the method provides analytical expressions of design sensitivities
of the first two moments of a stochastic response. The second and
third methods, relevant to probability distribution or reliability
analysis, exploit two distinct combinations grounded in PDD: the PDD-SPA
method, entailing SPA and score functions; and the PDD-MCS method,
utilizing the embedded MCS of PDD approximation and score functions.
Section 2 describes the PDD approximation of a multivariate function,
resulting in explicit formulae for the first two moments, and the
PDD-SPA and PDD-MCS methods for reliability analysis. Section 3 defines
score functions and unveils new closed-form formulae or numerical
procedures for design sensitivities of moments. The convergence of
the sensitivities of moments by the proposed method is also proved
in this section. Section 4 describes the PDD-SPA and PDD-MCS methods
for sensitivity analysis and explains how the effort required to calculate
the failure probability also delivers its design sensitivities, sustaining
no additional cost. The calculation of PDD expansion coefficients,
required in sensitivity analyses of both moments and failure probability,
is discussed in Section 5. In Section 6, six numerical examples are
presented to probe the convergence properties, accuracy, and computational
efficiency of the proposed methods, including design optimization
of a three-hole bracket. Finally, conclusions are drawn in Section
7.

\section{Polynomial Dimensional Decomposition Methods for Stochastic Analyses}

Let $\mathbb{N}$, $\mathbb{N}_{0}$, $\mathbb{R}$, and $\mathbb{R}_{0}^{+}$
represent the sets of positive integer (natural), non-negative integer,
real, and non-negative real numbers, respectively. For $k\in\mathbb{N}$,
denote by $\mathbb{R}^{k}$ the $k$-dimensional Euclidean space and
by $\mathbb{N}_{0}^{k}$ the $k$-dimensional multi-index space. These
standard notations will be used throughout the paper.

Consider a measurable space $(\Omega,\mathcal{F})$, where $\Omega$
is a sample space and $\mathcal{F}$ is a $\sigma$-field on $\Omega$.
Defined over $(\Omega,\mathcal{F})$, let $\{P_{\mathbf{d}}:\mathcal{F}\to[0,1]\}$
be a family of probability measures, where for $M\in\mathbb{N}$ and
$N\in\mathbb{N}$, $\mathbf{d}=(d_{1},\cdots,d_{M})\in\mathcal{D}$
is an $\mathbb{R}^{M}$-valued design vector with non-empty closed
set $\mathcal{D}\subseteq\mathbb{R}^{M}$, and $\mathbf{X}:=(X_{1},\cdots,X_{N}):(\Omega,\mathcal{F})\to(\mathbb{R}^{N},\mathcal{B}^{N})$
be an $\mathbb{R}^{N}$-valued input random vector
with $\mathcal{B}^{N}$ representing the Borel $\sigma$-field on
$\mathbb{R}^{N}$, describing the statistical uncertainties in loads,
material properties, and the geometry of a complex mechanical system.
The probability law of $\mathbf{X}$ is completely defined by a family
of the joint probability density functions (PDF) $\{f_{\mathbf{X}}(\mathbf{x};\mathbf{d}),\:\mathbf{x}\in\mathbb{R}^{N},\:\mathbf{d}\in\mathcal{D}\}$
that are associated with probability measures $\{P_{\mathbf{d}},\:\mathbf{d}\in\mathcal{D}\}$,
so that the probability triple $(\Omega,\mathcal{F},P_{\mathbf{d}})$
of $\mathbf{X}$ depends on $\mathbf{d}$. A design variable $d_{k}$
can be any distribution parameter or a statistic $-$ for instance,
the mean or standard deviation $-$ of one or more random variables.

\subsection{Polynomial dimensional decomposition}

Let $y(\mathbf{X})$ be a real-valued, square-integrable, measurable
transformation on $(\Omega,\mathcal{F})$, describing the relevant
performance function of a complex system. It is assumed that $y:(\mathbb{R}^{N},\mathcal{B}^{N})\to(\mathbb{R},\mathcal{B})$
is not an explicit function of $\mathbf{d}$, although $y$ implicitly
depends on $\mathbf{d}$ via the probability law of $\mathbf{X}$.
Assuming independent coordinates of $\mathbf{X}$, its joint PDF is
expressed by a product, $f_{\mathbf{\mathbf{X}}}(\mathbf{x};\mathbf{d})={\prod_{i=1}^{i=N}}f_{X_{i}}(x_{i};\mathbf{d})$,
of marginal PDF $f_{X_{i}}:\mathbb{R}\to\mathbb{R}_{0}^{+}$ of $X_{i}$,
$i=1,\cdots,N$, defined on its probability triple $(\Omega_{i},\mathcal{F}_{i},P_{i,\mathbf{d}})$
with a bounded or an unbounded support on $\mathbb{R}$. Then, for
a given subset $u\subseteq\{1,\cdots,N\}$, $f_{\mathbf{X}_{-u}}(\mathbf{x}_{-u};\mathbf{d}):=\prod_{i=1,i\notin u}^{N}f_{X_{i}}(x_{i};\mathbf{d})$
defines the marginal density function of $\mathbf{X}_{-u}:=\mathbf{X}_{\{1,\cdots,N\}\backslash u}$.

\subsubsection{ANOVA dimensional decomposition}

The analysis-of-variance (ANOVA) dimensional decomposition, expressed
by the recursive form \cite{efron81,sobol03,rahman12}
\begin{align}
y(\mathbf{X}) & ={\displaystyle \sum_{u\subseteq\{1,\cdots,N\}}y_{u}(\mathbf{X}_{u};\mathbf{d})},\label{ANOVA1}\\
y_{\emptyset}(\mathbf{d}) & =\int_{\mathbb{R}^{N}}y(\mathbf{x})f_{\mathbf{X}}(\mathbf{x};\mathbf{d})d\mathbf{x},\label{ANOVA2}\\
y_{u}(\mathbf{X}_{u};\mathbf{d}) & ={\displaystyle \int_{\mathbb{R}^{N-|u|}}y(\mathbf{X}_{u},\mathbf{x}_{-u})}f_{\mathbf{X}_{-u}}(\mathbf{x}_{-u};\mathbf{d})d\mathbf{\mathbf{x}}_{-u}-{\displaystyle \sum_{v\subset u}}y_{v}(\mathbf{X}_{v};\mathbf{d}),\label{ANOVA3}
\end{align}
is a finite, hierarchical expansion of $y$ in terms of its input
variables with increasing dimensions, where $u\subseteq\{1,\cdots,N\}$
is a subset with the complementary set $-u=\{1,\cdots,N\}\backslash u$
and cardinality $0\le|u|\le N$, and $y_{u}$ is a $|u|$-variate
component function describing the interactive effect
of $\mathbf{X}_{u}=(X_{i_{1}},\cdots,X_{i_{|u|}})$, $1\leq i_{1}<\cdots<i_{|u|}\leq N$,
a subvector of $\mathbf{X}$. The
summation in Equation \ref{ANOVA1} comprises $2^{N}$
terms, with each term depending on a group of variables indexed by
a particular subset of $\{1,\cdots,N\}$, including the empty set
$\emptyset$.

The ANOVA component functions $y_{u}$, $\emptyset\ne u\subseteq\{1,\cdots,N\}$,
have two remarkable properties: (1) the component functions,
$y_{u}$, $\emptyset\ne u\subseteq\{1,\cdots,N\}$, have zero means;
and (2) any two distinct component functions $y_{u}$ and $y_{v}$,
where $u\subseteq\{1,\cdots,N\}$, $v\subseteq\{1,\cdots,N\}$, and
$u\neq v$, are orthogonal. Further details are available elsewhere
\cite{rahman12}.
\begin{rem}
The coefficient $y_{\emptyset}=\mathbb{E}_{\mathbf{d}}[y(\mathbf{X})]$
in Equation \ref{ANOVA2} is a function of the
design vector $\mathbf{d},$ which describes the probability distribution
of the random vector $\mathbf{X}$. Therefore, the adjective ``constant''
used to describe $y_{\emptyset}$ should be interpreted with respect
to $\mathbf{X}$, not $\mathbf{d}$. A similar condition applies for
the non-constant component functions $y_{u}$, $\emptyset\ne u\subseteq\{1,\cdots,N\}$,
which also depend on $\mathbf{d}$.
\end{rem}

\subsubsection{Orthonormal Polynomials and Stochastic Expansions}

Let $\{\psi_{ij}(x_{i};\mathbf{d});\; j=0,1,\cdots\}$ be a set of
univariate, orthonormal polynomial basis functions in the Hilbert
space $\mathcal{L}_{2}(\Omega_{i},\mathcal{F}_{i},P_{i,\mathbf{d}})$
that is consistent with the probability measure $P_{i,\mathbf{d}}$
or $f_{X_{i}}(x_{i};\mathbf{d})dx_{i}$ of $X_{i}$ for a given design
$\mathbf{d}$. For $\emptyset\ne u=\{i_{1},\cdots,i_{|u|}\}\subseteq\{1,\cdots,N\}$,
where $1\le|u|\le N$, let $(\times_{p=1}^{p=|u|}\Omega_{i_{p}},\times_{p=1}^{p=|u|}\mathcal{F}_{i_{p}},\times_{p=1}^{p=|u|}P_{i_{p},\mathbf{d}})$
be the product probability triple of $\mathbf{X}_{u}=(X_{i_{1}},\cdots,X_{i_{|u|}})$.
Denote the associated space of the $|u|$-variate component functions
of $y$ by
\begin{equation}
\mathcal{L}_{2}\left(\times_{p=1}^{p=|u|}\Omega_{i_{p}},\times_{p=1}^{p=|u|}\mathcal{F}_{i_{p}},\times_{p=1}^{p=|u|}P_{i_{p},\mathbf{d}}\right):=\left\{ y_{u}:\int_{\mathbb{R}^{|u|}}y_{u}^{2}(\mathbf{x}_{u};\mathbf{d})f_{\mathbf{X}_{u}}(\mathbf{x}_{u};\mathbf{d})d\mathbf{x}_{u}<\infty\right\} ,
\end{equation}
which is a Hilbert space. Since the joint density of $\mathbf{X}_{u}$
is separable (independence of $X_{i}\textrm{, }i\in u$), that is,
$f_{\mathbf{X}_{u}}(\mathbf{x}_{u};\mathbf{d})={\textstyle \prod_{p=1}^{|u|}}f_{X_{i_{p}}}(x_{i_{p}};\mathbf{d})$, the product $\psi_{u\mathbf{j}_{|u|}}(\mathbf{X}_{u};\mathbf{d}):=\prod_{p=1}^{|u|}\psi_{i_{p}j_{p}}(X_{i_{p}};\mathbf{d})$,
where $\mathbf{j}_{|u|}=(j_{1},\cdots,j_{|u|})\in\mathbb{N}_{0}^{|u|}$,
a $|u|$-dimensional multi-index, constitutes a multivariate orthonormal polynomial basis in $\mathcal{L}_{2}(\times_{p=1}^{p=|u|}\Omega_{i_{p}},\times_{p=1}^{p=|u|}\mathcal{F}_{i_{p}},\times_{p=1}^{p=|u|}P_{i_{p},\mathbf{d}})$.
Two important properties of these product polynomials from tensor
products of Hilbert spaces are as follows.
\begin{prop}
\label{p1}The product polynomials {\small $\psi_{u\mathbf{j}_{|u|}}(\mathbf{X}_{u};\mathbf{d})$,}
$\emptyset\ne u\subseteq\{1,\cdots,N\}$, $j_{1},\cdots,j_{|u|}\ne0$, $\mathbf{d}\in\mathcal{D}$, have zero means, i.e.,
\begin{equation}
\mathbb{E}_{\mathbf{d}}\left[\psi_{u\mathbf{j}_{|u|}}(\mathbf{X}_{u};\mathbf{d})\right]=0.\label{ep1}
\end{equation}

\end{prop}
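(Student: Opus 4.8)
The plan is to exploit two structural facts already in place: the independence of the coordinates of $\mathbf{X}$ and the orthonormality of the univariate polynomial bases. Because each index satisfies $j_p \ne 0$, it will suffice to show that every univariate factor in the product has zero mean, whence the product of the factorized means vanishes.

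First I would use independence to factorize the expectation. Writing out the definition $\psi_{u\mathbf{j}_{|u|}}(\mathbf{X}_u;\mathbf{d})=\prod_{p=1}^{|u|}\psi_{i_p j_p}(X_{i_p};\mathbf{d})$ and invoking the separability $f_{\mathbf{X}_u}(\mathbf{x}_u;\mathbf{d})=\prod_{p=1}^{|u|}f_{X_{i_p}}(x_{i_p};\mathbf{d})$, the Fubini--Tonelli theorem turns the integral over $\mathbb{R}^{|u|}$ into a product of one-dimensional integrals, giving
\[
\mathbb{E}_{\mathbf{d}}\!\left[\psi_{u\mathbf{j}_{|u|}}(\mathbf{X}_u;\mathbf{d})\right]=\prod_{p=1}^{|u|}\int_{\mathbb{R}}\psi_{i_p j_p}(x_{i_p};\mathbf{d})\,f_{X_{i_p}}(x_{i_p};\mathbf{d})\,dx_{i_p}=\prod_{p=1}^{|u|}\mathbb{E}_{\mathbf{d}}\!\left[\psi_{i_p j_p}(X_{i_p};\mathbf{d})\right].
\]
Integrability poses no issue, since each $\psi_{i_p j_p}\in\mathcal{L}_2(\Omega_{i_p},\mathcal{F}_{i_p},P_{i_p,\mathbf{d}})\subseteq\mathcal{L}_1$ on the probability space, so the product function is absolutely integrable and the interchange is legitimate.

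Next I would identify the zeroth-order polynomial with the constant function. Since $\psi_{i0}(x_i;\mathbf{d})$ has degree zero and $\int_{\mathbb{R}}f_{X_i}(x_i;\mathbf{d})\,dx_i=1$, the unit-norm normalization in $\mathcal{L}_2(\Omega_i,\mathcal{F}_i,P_{i,\mathbf{d}})$ forces $\psi_{i0}\equiv 1$. Consequently, for each $j_p\ge 1$ the orthonormality of $\psi_{i_p j_p}$ against $\psi_{i_p 0}$ reads
\[
\mathbb{E}_{\mathbf{d}}\!\left[\psi_{i_p j_p}(X_{i_p};\mathbf{d})\right]=\int_{\mathbb{R}}\psi_{i_p j_p}(x_{i_p};\mathbf{d})\cdot 1\cdot f_{X_{i_p}}(x_{i_p};\mathbf{d})\,dx_{i_p}=0,
\]
so every factor in the preceding product vanishes and the proposition follows. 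I do not expect a genuine obstacle: the result is an immediate consequence of the product measure together with the defining orthonormality. The only step deserving care is the explicit identification $\psi_{i0}\equiv 1$, which is what licenses reading a univariate mean as the inner product with the zeroth-order basis polynomial.
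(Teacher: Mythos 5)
Your proof is correct and follows exactly the route the paper itself implies: the paper states Proposition \ref{p1} without proof, as an immediate consequence of the tensor-product (independence) structure and the orthonormality of the univariate bases, which is precisely what you make explicit via factorization of the expectation and orthogonality of each $\psi_{i_p j_p}$, $j_p\ne 0$, against the constant zeroth-order polynomial. The only trifling imprecision is that unit normalization forces $\psi_{i0}\equiv\pm 1$ rather than $+1$, but the orthogonality argument gives the zero mean regardless of the sign convention.
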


\begin{prop}
\label{p2}Any two distinct product polynomials $\psi_{u\mathbf{j}_{|u|}}(\mathbf{X}_{u};\mathbf{d})$
and $\psi_{v\mathbf{k}_{|v|}}(\mathbf{X}_{v};\mathbf{d})$ for $\mathbf{d}\in\mathcal{D}$, where $\emptyset\ne u\subseteq\{1,\cdots,N\}$,
$\emptyset\ne v\subseteq\{1,\cdots,N\}$, $j_{1},\cdots,j_{|u|}\ne0$, $k_{1},\cdots,k_{|v|}\ne0$, are uncorrelated and each has unit variance,
i.e., \emph{
\begin{equation}
\mathbb{E}_{\mathbf{d}}\left[\psi_{u\mathbf{j}_{|u|}}(\mathbf{X}_{u};\mathbf{d})\psi_{v\mathbf{k}_{|v|}}(\mathbf{X}_{v};\mathbf{d})\right]=\left\{ \begin{array}{ll}
1 & \mathrm{if}\; u=v;\:\mathbf{j}_{|u|}=\mathbf{k}_{|v|},\\
0 & \mathrm{otherwise.}
\end{array}\right.\label{ep2}
\end{equation}
}\end{prop}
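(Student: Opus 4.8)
The plan is to reduce the multivariate expectation in Equation \ref{ep2} to a product of univariate expectations by invoking the independence of the coordinates of $\mathbf{X}$, and then to evaluate each univariate factor using orthonormality of the basis $\{\psi_{ij}\}$ together with the normalization $\psi_{i0}\equiv1$ of the zeroth-order member. Write $u=\{i_1,\dots,i_{|u|}\}$ and $v=\{l_1,\dots,l_{|v|}\}$. First I would pad both multi-indices to full length: define $a_i:=j_p$ if $i=i_p\in u$ and $a_i:=0$ otherwise, and likewise $b_i:=k_q$ if $i=l_q\in v$ and $b_i:=0$ otherwise. With the convention $\psi_{i0}\equiv1$, the product of the two polynomials then becomes a product over all $N$ coordinates,
\begin{equation*}
\psi_{u\mathbf{j}_{|u|}}(\mathbf{X}_u;\mathbf{d})\,\psi_{v\mathbf{k}_{|v|}}(\mathbf{X}_v;\mathbf{d})=\prod_{i=1}^{N}\psi_{ia_i}(X_i;\mathbf{d})\,\psi_{ib_i}(X_i;\mathbf{d}),
\end{equation*}
since the padded factors with $a_i=0$ or $b_i=0$ merely insert the constant $1$.

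Next, because the joint density separates as $f_{\mathbf{X}}(\mathbf{x};\mathbf{d})=\prod_{i=1}^{N}f_{X_i}(x_i;\mathbf{d})$ under independence, the expectation of this product factorizes coordinatewise,
\begin{equation*}
\mathbb{E}_{\mathbf{d}}\!\left[\psi_{u\mathbf{j}_{|u|}}\psi_{v\mathbf{k}_{|v|}}\right]=\prod_{i=1}^{N}\mathbb{E}_{\mathbf{d}}\!\left[\psi_{ia_i}(X_i;\mathbf{d})\,\psi_{ib_i}(X_i;\mathbf{d})\right]=\prod_{i=1}^{N}\delta_{a_ib_i},
\end{equation*}
where the last equality is the univariate orthonormality relation $\mathbb{E}_{\mathbf{d}}[\psi_{ij}\psi_{ik}]=\delta_{jk}$ in $\mathcal{L}_2(\Omega_i,\mathcal{F}_i,P_{i,\mathbf{d}})$; the special case $b_i=0$, $a_i\neq0$ reproduces the zero-mean property of Proposition \ref{p1}.

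Finally I would read off the dichotomy. The product $\prod_{i=1}^{N}\delta_{a_ib_i}$ equals $1$ exactly when $a_i=b_i$ for every $i$, and $0$ otherwise. Since by hypothesis all components of $\mathbf{j}_{|u|}$ and of $\mathbf{k}_{|v|}$ are nonzero, the nonzero entries of $(a_i)$ are supported precisely on $u$ and those of $(b_i)$ precisely on $v$; hence $a_i=b_i$ for all $i$ holds if and only if $u=v$ and $\mathbf{j}_{|u|}=\mathbf{k}_{|v|}$, which is exactly the asserted cases. Specializing to $u=v$ and $\mathbf{j}_{|u|}=\mathbf{k}_{|v|}$ yields $\mathbb{E}_{\mathbf{d}}[\psi_{u\mathbf{j}_{|u|}}^2]=1$, and combined with Proposition \ref{p1} this is the unit-variance claim.

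This argument presents no genuine obstacle beyond careful index bookkeeping; the one step deserving attention is the padding, where the convention $\psi_{i0}\equiv1$ (forced by normalization of the zeroth orthonormal polynomial) is precisely what allows a coordinate lying in only one of $u$ or $v$ to contribute a zero-mean factor. This is the mechanism by which any index in the symmetric difference $u\triangle v$ annihilates the entire expectation, delivering orthogonality whenever $u\neq v$.
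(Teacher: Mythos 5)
Your proof is correct and is essentially the argument the paper itself relies on: the paper states Proposition \ref{p2} without a detailed proof, attributing it to the tensor-product structure of the Hilbert spaces, which is exactly your mechanism of padding with $\psi_{i0}\equiv1$, factorizing the expectation via the separable density $f_{\mathbf{X}}(\mathbf{x};\mathbf{d})=\prod_{i=1}^{N}f_{X_i}(x_i;\mathbf{d})$, and applying univariate orthonormality $\mathbb{E}_{\mathbf{d}}[\psi_{ij}\psi_{ik}]=\delta_{jk}$. Your observation that any index in the symmetric difference $u\triangle v$ contributes a zero-mean factor that annihilates the expectation is precisely the right bookkeeping, and no gap remains.
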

\begin{rem}
Given a probability measure $P_{i,\mathbf{d}}$ of any random variable
$X_{i}$, the well-known three-term recurrence relation is commonly
used to construct the associated orthogonal polynomials \cite{rahman09b,gautschi04}.
For $m\in\mathbb{N}$, the first $m$ recursion coefficient pairs
are uniquely determined by the first $2m$ moments of $X_{i}$ that
must exist. When these moments are exactly calculated, they lead to
exact recursion coefficients, some of which belong to classical orthogonal
polynomials. For an arbitrary probability measure, approximate methods,
such as the Stieltjes procedure, can be employed to obtain the recursion
coefficients \cite{rahman09b,gautschi04}.
\end{rem}

The orthogonal polynomial expansion of a non-constant $|u|$-variate
ANOVA component function in Equation \ref{ANOVA3} becomes \cite{rahman08,rahman09b}
\begin{equation}
y_{u}(\mathbf{X}_{u};\mathbf{d})=\sum_{{\textstyle {\mathbf{j}_{|u|}\in\mathbb{N}_{0}^{|u|}\atop j_{1},\cdots,j_{|u|}\neq0}}}C_{u\mathbf{j}_{|u|}}(\mathbf{d})\psi_{u\mathbf{j}_{|u|}}(\mathbf{X}_{u};\mathbf{d})\label{fpe}
\end{equation}
for any $\emptyset\ne u\subseteq\{1,\cdots,N\}$ with
\begin{equation}
C_{u\mathbf{j}_{|u|}}(\mathbf{d}):=\int_{\mathbb{R}^{N}}y(\mathbf{x})\psi_{u\mathbf{j}_{|u|}}(\mathbf{\mathbf{x}}_{u};\mathbf{d})f_{\mathbf{X}}(\mathbf{x};\mathbf{d})d\mathbf{x}\label{coeff}
\end{equation}
representing the corresponding expansion coefficient. Similar to $y_{\emptyset}$,
the coefficient $C_{u\mathbf{j}_{|u|}}$ also depends on the design
vector $\mathbf{d}$. When $u=\{i\}$, $i=1,\cdots,N$, the univariate
component functions and expansion coefficients are
\begin{equation}
y_{\{i\}}(X_{i};\mathbf{d})=\sum_{j=1}^{\infty}C_{ij}(\mathbf{d})\psi_{ij}(X_{i};\mathbf{d})
\end{equation}
and $C_{ij}(\mathbf{d}):=C_{\{i\}(j)}(\mathbf{d})$, respectively.
When $u=\{i_{1},i_{2}\}$, $i_{1}=1,\cdots,N-1$, $i_{2}=i_{1}+1,\cdots,N$,
the bivariate component functions and expansion coefficients are
\begin{equation}
y_{\{i_{1},i_{2}\}}(X_{i_{1}},X_{i_{2}};\mathbf{d})=\sum_{j_{1}=1}^{\infty}\sum_{j_{2}=1}^{\infty}C_{i_{1}i_{2}j_{1}j_{2}}(\mathbf{d})\psi_{i_{1}j_{1}}(X_{i_{1}};\mathbf{d})\psi_{i_{2}j_{2}}(X_{i_{2}};\mathbf{d})
\end{equation}
and $C_{i_{1}i_{2}j_{1}j_{2}}(\mathbf{d}):=C_{\{i_{1},i_{2}\}(j_{1},j_{2})}(\mathbf{d})$,
respectively, and so on. Using Propositions \ref{p1} and \ref{p2},
all component functions $y_{u}$, $\emptyset\ne u\subseteq\{1,\cdots,N\}$,
are found to satisfy the annihilating conditions of the ANOVA dimensional
decomposition. The end result of combining Equations \ref{ANOVA1}-\ref{ANOVA3}
and \ref{fpe} is the PDD \cite{rahman08,rahman09b},
\begin{equation}
y(\mathbf{X})=y_{\emptyset}(\mathbf{d})+{\displaystyle \sum_{\emptyset\ne u\subseteq\{1,\cdots,N\}}}\sum_{{\textstyle {\mathbf{j}_{|u|}\in\mathbb{N}_{0}^{|u|}\atop j_{1},\cdots,j_{|u|}\neq0}}}\!\! C_{u\mathbf{j}_{|u|}}(\mathbf{d})\psi_{u\mathbf{j}_{|u|}}(\mathbf{X}_{u};\mathbf{d}),\label{pddfull}
\end{equation}
providing a hierarchical expansion of $y$ in terms of an infinite number of coefficients and orthonormal
polynomials. In practice, the number of coefficients or polynomials
must be finite, say, by retaining at most $m$th-order polynomials
in each variable. Furthermore, in many applications, the function
$y$ can be approximated by a sum of at most $S$-variate component
functions, where $S\in\mathbb{N}$; $1\le S\le N$, resulting in the
$S$-variate, $m$th-order PDD approximation
\begin{equation}
\tilde{y}_{S,m}(\mathbf{X})=y_{\emptyset}(\mathbf{d})+{\displaystyle \sum_{{\textstyle {\emptyset\ne u\subseteq\{1,\cdots,N\}\atop 1\le|u|\le S}}}}\sum_{{\textstyle {\mathbf{j}_{|u|}\in\mathbb{N}_{0}^{|u|},||\mathbf{j}_{|u|}||_{\infty}\le m\atop j_{1},\cdots,j_{|u|}\neq0}}}\!\! C_{u\mathbf{j}_{|u|}}(\mathbf{d})\psi_{u\mathbf{j}_{|u|}}(\mathbf{X}_{u};\mathbf{d}),\label{pdd}
\end{equation}
containing $\sum_{k=0}^{S}\binom{N}{k}m^{k}$ number of PDD coefficients
and corresponding orthonormal polynomials. The inner sum of Equation \ref{pdd} contains the $\infty-$norm $||\mathbf{j}_{|u|}||_{\infty} := \max \left(j_{1},\cdots, j_{|u|}\right) \in \mathbb{N}_{0}^{|u|}$ and precludes $j_{1},\cdots, j_{|u|}\ne 0$, that is, the individual degree of each variable $X_{i}$ in $\psi_{u\mathbf{j}_{|u|}},\ i\in u$, can not be \emph{zero} since $y_{u}$ is a \emph{zero}-mean strictly $|u|-$variate function. Due to its additive structure, the approximation in Equation \ref{pdd} includes
degrees of interaction among at most $S$ input variables $X_{i_{1}},\cdots,X_{i_{S}}$,
$1\le i_{1}\le\cdots\le i_{S}\le N$. For instance, by selecting $S=1$
and $2$, the functions
\begin{equation}
\tilde{y}_{1,m}(\mathbf{\mathbf{X}})=y_{\emptyset}+{\displaystyle \sum_{i=1}^{N}}{\displaystyle \sum_{j=1}^{m}}C_{ij}(\mathbf{d})\psi_{ij}(X_{i};\mathbf{d})\label{updd}
\end{equation}
 and
\begin{equation}
\begin{array}{rcl}
\tilde{y}_{2,m}(\mathbf{X}) & = & y_{\emptyset}(\mathbf{d})+{\displaystyle \sum_{i=1}^{N}}{\displaystyle \sum_{j=1}^{m}}C_{ij}(\mathbf{d})\psi_{ij}(X_{i};\mathbf{d})+\\
 &  & {\displaystyle \sum_{i_{1}=1}^{N-1}}\;{\displaystyle \sum_{i_{2}=i_{1}+1}^{N}}{\displaystyle \sum_{j_{1}=1}^{m}}{\displaystyle \sum_{j_{2}=1}^{m}}C_{i_{1}i_{2}j_{1}j_{2}}(\mathbf{d})\psi_{i_{1}j_{1}}(X_{i_{1}};\mathbf{d})\psi_{i_{2}j_{2}}(X_{i_{2}};\mathbf{d}),
\end{array}\label{bpdd}
\end{equation}
respectively, provide univariate and bivariate $m$th-order PDD approximations,
contain contributions from all input variables, and should not be
viewed as first- and second-order approximations, nor as limiting the nonlinearity of $y$. Depending on how the component
functions are constructed, arbitrarily high-order univariate and bivariate
terms of $y$ could be lurking inside $\tilde{y}_{1,m}$ and $\tilde{y}_{2,m}$.
When $S\to N$ and $m\to\infty$, $\tilde{y}_{S,m}$ converges to
$y$ in the mean-square sense, permitting Equation \ref{pdd} to generate a hierarchical and convergent sequence of approximations
of $y$. Readers interested in further details of PDD are referred
to the authors' past works \cite{rahman08,rahman09b}.

\subsection{Statistical Moment Analysis}

Let $m^{(r)}(\mathbf{d}):=\mathbb{E}_{\mathbf{d}}[y^{r}(\mathbf{X})]$,
if it exists, define the raw moment of $y$ of order $r$, where $r\in\mathbb{N}$.
Given an $S$-variate, $m$th-order PDD approximation $\tilde{y}_{S,m}(\mathbf{X})$
of $y(\mathbf{X})$, let $\tilde{m}_{S,m}^{(r)}(\mathbf{d}):=\mathbb{E}_{\mathbf{d}}[\tilde{y}_{S,m}^{r}(\mathbf{X})]$
define the raw moment of $\tilde{y}_{S,m}$ of order $r$. The following
subsections describe the explicit formulae or analytical expressions
for calculating the moments by PDD approximations.

\subsubsection{First- and Second-Order Moments}

Applying the expectation operator on $\tilde{y}_{S,m}(\mathbf{X})$
and $\tilde{y}_{S,m}^{2}(\mathbf{X})$, and recognizing Propositions
\ref{p1} and \ref{p2}, the first moment or mean \cite{rahman10}
\begin{equation}
\tilde{m}_{S,m}^{(1)}(\mathbf{d}):=\mathbb{E}_{\mathbf{d}}\left[\tilde{y}_{S,m}(\mathbf{X})\right]=y_{\emptyset}(\mathbf{d})=\mathbb{E}_{\mathbf{d}}\left[y(\mathbf{X})\right]=:m^{(1)}(\mathbf{d})\label{mom1}
\end{equation}
of the $S$-variate, $m$th-order PDD approximation matches the exact
mean of $y$, regardless of $S$ or $m$, whereas the second moment
\cite{rahman10}
\begin{equation}
\tilde{m}_{S,m}^{(2)}(\mathbf{d}):=\mathbb{E}_{\mathbf{d}}\left[\tilde{y}_{S,m}^{2}(\mathbf{X})\right]=y_{\emptyset}^{2}(\mathbf{d})+{\displaystyle \sum_{{\textstyle {\emptyset\ne u\subseteq\{1,\cdots,N\}\atop 1\le|u|\le S}}}}\sum_{{\textstyle {\mathbf{j}_{|u|}\in\mathbb{N}_{0}^{|u|},||\mathbf{j}_{|u|}||_{\infty}\le m\atop j_{1},\cdots,j_{|u|}\neq0}}}C_{u\mathbf{j}_{|u|}}^{2}(\mathbf{d})\label{mom2}
\end{equation}
is calculated as the sum of squares of all expansion coefficients
of $\tilde{y}_{S,m}(\mathbf{X})$. Clearly, the approximate second
moment in Equation \ref{mom2} approaches the exact second moment
\begin{equation}
m^{(2)}(\mathbf{d}):=\mathbb{E}_{\mathbf{d}}\left[y^{2}(\mathbf{X})\right]=y_{\emptyset}^{2}(\mathbf{d})+{\displaystyle \sum_{\emptyset\ne u\subseteq\{1,\cdots,N\}}}\sum_{{\textstyle {\mathbf{j}_{|u|}\in\mathbb{N}_{0}^{|u|}\atop j_{1},\cdots,j_{|u|}\neq0}}}C_{u\mathbf{j}_{|u|}}^{2}(\mathbf{d})
\end{equation}
of $y$ when $S\to N$ and $m\to\infty$. The mean-square convergence
of $\tilde{y}_{S,m}$ is guaranteed as $y$, and its component functions
are all members of the associated Hilbert spaces. In addition, the
variance of $\tilde{y}_{S,m}(\mathbf{X})$ is also mean-square convergent.

For the two special cases, $S=1$ and $S=2$, the univariate and bivariate
PDD approximations yield the same exact mean value $y_{\emptyset}(\mathbf{d})$,
as noted in Equation \ref{mom1}. However, the
respective second moment approximations,
\begin{equation}
\tilde{m}_{1,m}^{(2)}(\mathbf{d})=y_{\emptyset}^{2}(\mathbf{d})+{\displaystyle \sum_{i=1}^{N}}{\displaystyle \sum_{j=1}^{m}}C_{ij}^{2}(\mathbf{d})
\end{equation}
 and
\begin{equation}
\tilde{m}_{2,m}^{(2)}(\mathbf{d})=y_{\emptyset}^{2}(\mathbf{d})+{\displaystyle \sum_{i=1}^{N}}{\displaystyle \sum_{j=1}^{m}}C_{ij}^{2}(\mathbf{d})+{\displaystyle \sum_{i_{1}=1}^{N-1}}{\displaystyle \sum_{i_{2}=i_{1}+1}^{N}}{\displaystyle \sum_{j_{2}=1}^{m}}{\displaystyle \sum_{j_{1}=1}^{m}}C_{i_{1}i_{2}j_{1}j_{2}}^{2}(\mathbf{d}),
\end{equation}
differ, depend on $m$, and progressively improve as $S$ becomes
larger. Recent works on error analysis indicate that the second-moment
properties obtained from the ANOVA dimensional decomposition, which
leads to PDD approximations, are superior to those derived from dimension-reduction
methods that are grounded in RDD \cite{rahman11,rahman12}.

\subsubsection{Higher-Order Moments}

When calculating higher-order ($2<r<\infty)$ moments by the PDD approximation,
no explicit formulae exist for a general function $y$ or the probability
distribution of $\mathbf{X}$. In which instance, two options are
proposed to estimate the higher-order moments.

Option I entails expanding the $r$th power of the PDD approximation
of $y$ by
\begin{equation}
\tilde{y}_{S,m}^{r}(\mathbf{X})=g_{\emptyset}(\mathbf{d})+{\displaystyle \sum_{{\textstyle {\emptyset\ne u\subseteq\{1,\cdots,N\}\atop 1\le|u|\le\min(rS,N)}}}}g_{u}(\mathbf{X}_{u};\mathbf{d})\label{yrPDD}
\end{equation}
in terms of a constant $g_{\emptyset}(\mathbf{d})$ and at most $\min(rS,N)$-variate
polynomial functions $g_{u}(\mathbf{X}_{u};\mathbf{d})$ and then
calculating the moment
\begin{equation}
\begin{array}{rcl}
\tilde{m}_{S,m}^{(r)}(\mathbf{d}) & = & \int_{\mathbb{R}^{N}}\tilde{y}_{S,m}^{r}(\mathbf{x})f_{\mathbf{\mathbf{X}}}(\mathbf{x};\mathbf{d})d\mathbf{x}\\
 & = & {\displaystyle g_{\emptyset}(\mathbf{d})+\sum_{{\textstyle {\emptyset\ne u\subseteq\{1,\cdots,N\}\atop 1\le|u|\le\min(rS,N)}}}\int_{\mathbb{R}^{|u|}}g_{u}(\mathbf{x}_{u};\mathbf{d})f_{\mathbf{\mathbf{X}}_{u}}(\mathbf{x}_{u};\mathbf{d})d\mathbf{x}_{u}}
\end{array}\label{momr}
\end{equation}
by integration, if it exists. For well-behaved functions, including
many encountered in practical applications, $\tilde{m}_{S,m}^{(r)}(\mathbf{d})$
should render an accurate approximation of $m^{(r)}(\mathbf{d})$,
the $r$th-order moment of $y(\mathbf{X})$, although there is no
rigorous mathematical proof of convergence when $r>2$. Note that
Equation \ref{momr} involves integrations of elementary polynomial
functions and does not require any expensive evaluation of the original
function $y$. Nonetheless, since $\tilde{y}_{S,m}(\mathbf{X})$ is
a superposition of at most $S$-variate component functions of independent
variables, the largest dimension of the integrals in Equation \ref{momr}
is $\min(rS,N)$. Therefore, Option I mandates high-dimensional integrations
if $\min(rS,N)$ is large. In addition, if $rS\ge N$ and $N$ is
large, then the resulting $N$-dimensional integration is infeasible.

As an alternative, Option II, relevant to large $N$, creates an additional
$\bar{S}$-variate, $\bar{m}$th-order PDD approximation
\begin{equation}
\tilde{z}_{\bar{S},\bar{m}}(\mathbf{X})=z_{\emptyset}(\mathbf{d})+{\displaystyle \sum_{{\textstyle {\emptyset\ne u\subseteq\{1,\cdots,N\}\atop 1\le|u|\le\bar{S}}}}}{\displaystyle \sum_{{\textstyle {\mathbf{j}_{|u|}\in\mathbb{N}_{0}^{|u|},||\mathbf{j}_{|u|}||_{\infty}\le\bar{m}\atop j_{1},\cdots,j_{|u|}\neq0}}}}\bar{C}_{u\mathbf{j}_{|u|}}(\mathbf{d})\psi_{u\mathbf{j}_{|u|}}(\mathbf{X}_{u};\mathbf{d})
\end{equation}
of $\tilde{y}_{S,m}^{r}(\mathbf{X})$, where $\bar{S}$
and $\bar{m}$, potentially distinct from $S$ and $m$, are accompanying
truncation parameters, $z_{\emptyset}(\mathbf{d}):=\int_{\mathbb{R}^{N}}\tilde{y}_{S,m}^{r}(\mathbf{x})f_{\mathbf{\mathbf{X}}}(\mathbf{x};\mathbf{d})d\mathbf{x}$,
and $\bar{C}_{u\mathbf{j}_{|u|}}(\mathbf{d}):=\int_{\mathbb{R}^{N}}\tilde{y}_{S,m}^{r}(\mathbf{x})\psi_{u\mathbf{j}_{|u|}}(\mathbf{\mathbf{x}}_{u};\mathbf{d})f_{\mathbf{X}}(\mathbf{x};\mathbf{d})d\mathbf{x}$
are the associated PDD expansion coefficients of $\tilde{z}_{\bar{S},\bar{m}}(\mathbf{X})$.
Replacing $\tilde{y}_{S,m}^{r}(\mathbf{x})$ with $\tilde{z}_{\bar{S},\bar{m}}(\mathbf{x})$,
the first line of Equation \ref{momr} produces
\begin{equation}
\tilde{m}_{S,m}^{(r)}(\mathbf{d})=\int_{\mathbb{R}^{N}}\tilde{z}_{\bar{S},\bar{m}}(\mathbf{x})f_{\mathbf{\mathbf{X}}}(\mathbf{x};\mathbf{d})d\mathbf{x}=:z_{\emptyset}(\mathbf{d}).\label{momr2}
\end{equation}
Then the evaluation of $z_{\emptyset}(\mathbf{d})$ from the definition,
which also requires $N$-dimensional integration, leads Equation \ref{momr2}
back to Equation \ref{momr}, raising the question of why Option II
is introduced. Indeed, the distinction between the two options forms
when the constant $z_{\emptyset}(\mathbf{d})$ is approximately calculated
by dimension-reduction integration, to be explained in Section 5,
entailing at most $\bar{S}$-dimensional integrations. Nonetheless,
if $\bar{S}\ll rS<N$, then a significant dimension reduction is possible
in Option II for estimating higher-order moments. In other words,
Option II, which is an approximate version of Option I, may provide
efficient solutions to high-dimensional problems, provided that a
loss of accuracy in Option II, if any, is insignificant. The higher-order
moments are useful for approximating the probability distribution
of a stochastic response or reliability analysis, including their
sensitivity analyses, and will  be revisited in the next subsection.

\subsection{Reliability Analysis}

A fundamental problem in reliability analysis entails calculation
of the failure probability
\begin{equation}
P_{F}(\mathbf{d}):=P_{\mathbf{d}}\left[\mathbf{X}\in\Omega_{F}\right]=\int_{\mathbb{R}^{N}}{\displaystyle I_{\Omega_{F}}(\mathbf{x})f_{\mathbf{X}}(\mathbf{x};\mathbf{d})d\mathbf{x}}=:\mathbb{E}_{\mathbf{d}}\left[I_{\Omega_{F}}(\mathbf{X})\right],\label{pf}
\end{equation}
where $\Omega_{F}$ is the failure set and $I_{\Omega_{F}}(\mathbf{x})$
is the associated indicator function, which is equal to \emph{one}
when $\mathbf{x}\in\Omega_{F}$ and \emph{zero} otherwise. Depending
on the nature of the failure domain $\Omega_{F}$, a component or
a system reliability analysis can be envisioned. For component reliability
analysis, the failure domain is often adequately described by a single
performance function $y(\mathbf{x})$, for instance, $\Omega_{F}:=\{\mathbf{x}:y(\mathbf{x})<0\}$.
In contrast, multiple, interdependent performance functions $y_{i}(\mathbf{x}),\; i=1,2,\cdots,$
are required for system reliability analysis, leading, for example,
to $\Omega_{F}:=\{\mathbf{x}:\cup_{i}y_{i}(\mathbf{x})<0\}$ and $\Omega_{F}:=\{\mathbf{x}:\cap_{i}y_{i}(\mathbf{x})<0\}$
for series and parallel systems, respectively. In this subsection,
two methods are presented for estimating the failure probability.
The PDD-SPA method, which blends the PDD approximation with SPA, is
described first. Then the PDD-MCS method, which exploits the PDD approximation
for MCS, is elucidated.

\subsubsection{The PDD-SPA Method}

Let $F_{y}(\xi):=P_{\mathbf{d}}[y\le\xi]$ be the cumulative distribution
function (CDF) of $y(\mathbf{X})$. Assume that the PDF $f_{y}(\xi):=dF_{y}(\xi)/d\xi$
exists and suppose that the cumulant generating function (CGF)
\begin{equation}
K_{y}(t):=\ln\left\{ \int_{-\infty}^{+\infty}\exp(t\xi)f_{y}(\xi)d\xi\right\} \label{cgf}
\end{equation}
of $y$ converges for $t\in\mathbb{R}$ in some non-vanishing interval
containing the origin. Using inverse Fourier transformation, exponential
power series expansion, and Hermite polynomial approximation, Daniels
\cite{daniels54} developed an SPA formula to approximately evaluate
$f_{y}(\xi)$. However, the success of such formula is predicated
on how accurately the CGF and its derivatives, if they exist, are
calculated. In fact, determining $K_{y}(t)$ is immensely difficult because it is equivalent to knowing all higher-order moments of $y$.
To mitigate this problem, consider the Taylor series expansion of
\begin{equation}
K_{y}(t)=\sum_{r\in\mathbb{N}}\frac{\kappa^{(r)}t^{r}}{r!}\label{cgf2}
\end{equation}
at $t=0$, where $\kappa^{(r)}:=d^{r}K_{y}(0)/dt^{r},$ $r\in\mathbb{N}$,
is known as the $r$th-order cumulant of $y(\mathbf{X})$. If some
of these cumulants are effectively estimated, then a truncated Taylor
series provides a useful means to approximate $K_{y}(t)$. For instance,
assume that, given a positive integer $Q<\infty$, the raw moments
$\tilde{m}_{S,m}^{(r)}(\mathbf{d})$ of order at most $Q$ have been
calculated with sufficient accuracy using an $S$-variate, $m$th-order
PDD approximation $\tilde{y}_{S,m}(\mathbf{X})$ of $y(\mathbf{X})$,
as described in the preceding subsection. Then the corresponding approximate
cumulants are easily obtained from the well-known cumulant-moment
relationship,
\begin{equation}
\tilde{\kappa}_{S,m}^{(r)} (\mathbf{d})= \left\{ \begin{array}{l@{\quad:\quad}l}
\tilde{m}_{S,m}^{(1)}(\mathbf{d}) & r=1,\\
\tilde{m}_{S,m}^{(r)}(\mathbf{d})-\sum\limits _{p=1}^{r-1}\binom{r-1}{p-1}\tilde{\kappa}_{S,m}^{(p)}(\mathbf{d})\tilde{m}_{S,m}^{(r-p)}(\mathbf{d})\; & 2\le r\le Q,
\end{array}\right.\label{mom2cum}
\end{equation}
where the functional argument $\mathbf{d}$ serves as a reminder that
the moments and cumulants all depend on the design vector $\mathbf{d}$.
Setting $\kappa^{(r)}=\tilde{\kappa}_{S,m}^{(r)}$ for $r=1,\cdots,Q$,
and \emph{zero} otherwise in Equation \ref{cgf2}, the result is an
$S$-variate, $m$th-order PDD approximation
\begin{equation}
\tilde{K}_{y,Q,S,m}(t;\mathbf{d})=\sum_{r=1}^{Q}\frac{\tilde{\kappa}_{S,m}^{(r)}(\mathbf{d})t^{r}}{r!}\label{cgf3}
\end{equation}
of the $Q$th-order Taylor series expansion of $K_{y}(t)$. It is
elementary to show that $\tilde{K}_{y,Q,S,m}(t;\mathbf{d})\to K_{y}(t)$
when $S\to N$, $m\to\infty$, and $Q\to\infty$.

Using the CGF approximation in Equation \ref{cgf3}, Daniel's SPA
leads to the explicit formula \cite{daniels54},
\begin{equation}
\tilde{f}_{y,PS}(\xi;\mathbf{d})=\left[2\pi\tilde{K''}_{y,Q,S,m}(t_{s};\mathbf{d})\right]^{-\frac{1}{2}}\exp\left[\tilde{K}_{y,Q,S,m}(t_{s};\mathbf{d})-t_{s}\xi\right],\label{daniels}
\end{equation}
for the approximate PDF of $y$, where the subscript "PS" stands for PDD-SPA and $t_{s}$ is the saddlepoint
that is obtained from solving
\begin{equation}
\tilde{K}'_{y,Q,S,m}(t_{s};\mathbf{d})=\xi\label{sp}
\end{equation}
with $\tilde{K}'_{y,Q,S,m}(t;\mathbf{d}):=d\tilde{K}{}_{y,Q,S,m}(t;\mathbf{d})/dt$
and $\tilde{K}''_{y,Q,S,m}(t;\mathbf{d}):=d^{2}\tilde{K}{}_{y,Q,S,m}(t;\mathbf{d})/dt^{2}$
defining the first- and second-order derivatives, respectively, of
the approximate CGF of $y$ with respect to $t$. Furthermore, based
on a related work of Lugannani and Rice \cite{lugannani80}, the approximate
CDF of $y$ becomes
\begin{equation}
\begin{array}{c}
\tilde{F}_{y,PS}(\xi;\mathbf{d})=\Phi(w)+\phi(w){\displaystyle \left(\frac{1}{w}-\frac{1}{v}\right),}\\
w=\mbox{sgn}(t_{s})\left\{ 2\left[t_{s}\xi-\tilde{K}{}_{y,Q,S,m}(t_{s};\mathbf{d})\right]\right\} ^{\frac{1}{2}},\; v=t_{s}\left[\tilde{K}''_{y,Q,S,m}(t_{s};\mathbf{d})\right]^{\frac{1}{2}},
\end{array}\label{lr}
\end{equation}
where $\Phi(\cdot)$ and $\phi(\cdot)$ are the CDF and PDF, respectively,
of the standard Gaussian variable and $\mbox{sgn}(t_{s})=+1,-1,\mbox{or}\ 0$,
depending on whether $t_{s}$ is positive, negative, or zero. According
to Equation \ref{lr}, the CDF of $y$ at a point $\xi$ is obtained
using solely the corresponding saddlepoint $t_{s}$, that is, without
the need to integrate Equation \ref{daniels} from $-\infty$ to $\xi$.

Finally, using Lugannani and Rice's formula, the PDD-SPA estimate
$\tilde{P}_{F,PS}(\mathbf{d})$ of the component failure probability
$P_{F}(\mathbf{d}):=P[y(\mathbf{X})<0]$ is obtained as
\begin{equation}
\tilde{P}_{F,PS}(\mathbf{d})=\tilde{F}_{y,PS}(0;\mathbf{d}),\label{pflr}
\end{equation}
the PDD-SPA generated CDF of $y$ at $\xi=0$. It is important to
recognize that no similar SPA-based formulae are available for the
joint PDF or joint CDF of dependent stochastic responses. Therefore,
the PDD-SPA method in the current form cannot be applied to general
system reliability analysis.

The PDD-SPA method contains several truncation parameters that should
be carefully selected. For instance, if $Q$ is too small, then the
truncated CGF from Equation \ref{cgf3} may spoil the method, regardless
of how large are $S$ and $m$ chosen in the PDD approximation.
On the other hand, if $Q$ is overly large, then many higher-order
moments involved may not be accurately calculated by the PDD approximation.
More significantly, a finite-order truncation of CGF may cause loss
of convexity of the actual CGF, meaning that the one-to-one relationship
between $\xi$ and $t_{s}$ in Equation \ref{sp} is not ensured for
every threshold $\xi$. Furthermore, the important property $\tilde{K}''_{y,Q,S,m}(t_{s};\mathbf{d})>0$
may not be maintained. To resolve this quandary, Yuen et al. \cite{yuen07}
presented for $Q=4$ several distinct cases of the cumulants, describing
the interval $(t_{l},t_{u})$, where $-\infty\le t_{l}\le0$ and $0\le t_{u}\le\infty$,
such that $t_{l}\le t_{s}\le t_{u}$ and $\tilde{K}''_{y,Q,S,m}(t_{s};\mathbf{d})>0$,
ruling out any complex values of the square root in Equation \ref{daniels}
or \ref{lr}. Table \ref{tab:1-8case} summarizes these cases, which
were employed in the PDD-SPA method described in this paper. If the
specified threshold $\xi\in(\tilde{K'}{}_{y,Q,S,m}(t_{l};\mathbf{d}),\tilde{K'}{}_{y,Q,S,m}(t_{u};\mathbf{d}))$,
then the saddlepoint $t_{s}$ is uniquely determined from Equation
\ref{sp}, leading to the CDF or reliability in Equation \ref{lr}
or \ref{pflr}. Otherwise, the PDD-SPA method will fail to provide
a solution. It is important to note that developing similar cases
for $Q>4$, assuring a unique solution of the saddlepoint, is not
trivial, and was not considered in this work.

{\small }
\begin{table}[tbph]
\centering
\begin{threeparttable}
\caption{Intervals of the saddlepoint for $Q=4$$^{\tnote{(a)}}$ }
\label{tab:1-8case}
\begin{centering}
\begin{tabular}{cccc}
\hline
Case & Condition & $t_{l}$ & $t_{u}$\tabularnewline
\hline
1 & $\tilde{\kappa}_{S,m}^{(4)}>0$, $\Delta>0$, $\tilde{\kappa}_{S,m}^{(3)}>0$  & ${\displaystyle \frac{-\tilde{\kappa}_{S,m}^{(3)}+\sqrt{\Delta}}{\tilde{\kappa}_{S,m}^{(4)}}}$ & $+\infty$ \tabularnewline
2 & $\tilde{\kappa}_{S,m}^{(4)}>0$, $\Delta>0$, $\tilde{\kappa}_{S,m}^{(3)}<0$  & $-\infty$ & ${\displaystyle \frac{-\tilde{\kappa}_{S,m}^{(3)}-\sqrt{\Delta}}{\tilde{\kappa}_{S,m}^{(4)}}}$\tabularnewline
3 & $\tilde{\kappa}_{S,m}^{(4)}>0$, $\Delta=0$  & $-\infty$$^{\tnote{(b)}}$ & $+\infty$$^{\tnote{(b)}}$\tabularnewline
4 & $\tilde{\kappa}_{S,m}^{(4)}>0$, $\Delta<0$ & $-\infty$ & $+\infty$\tabularnewline
5 & $\tilde{\kappa}_{S,m}^{(4)}=0$, $\tilde{\kappa}_{S,m}^{(3)}>0$  & ${\displaystyle -\frac{\tilde{\kappa}_{S,m}^{(2)}}{\tilde{\kappa}_{S,m}^{(3)}}}$ & $+\infty$\tabularnewline
6 & $\tilde{\kappa}_{S,m}^{(4)}=0$, $\tilde{\kappa}_{S,m}^{(3)}=0$  & $-\infty$ & $+\infty$\tabularnewline
7 & $\tilde{\kappa}_{S,m}^{(4)}=0$, $\tilde{\kappa}_{S,m}^{(3)}<0$  & $-\infty$ & ${\displaystyle -\frac{\tilde{\kappa}_{S,m}^{(2)}}{\tilde{\kappa}_{S,m}^{(3)}}}$\tabularnewline
8 & $\tilde{\kappa}_{S,m}^{(4)}<0$  & ${\displaystyle \frac{-\tilde{\kappa}_{S,m}^{(3)}+\sqrt{\Delta}}{\tilde{\kappa}_{S,m}^{(4)}}}$ & ${\displaystyle \frac{-\tilde{\kappa}_{S,m}^{(3)}-\sqrt{\Delta}}{\tilde{\kappa}_{S,m}^{(4)}}}$\tabularnewline
\hline
\end{tabular}
\begin{tablenotes}
\footnotesize
\item [(a)] For $\tilde{K}_{y,4,S,m}(t;\mathbf{d})={\displaystyle \tilde{\kappa}_{S,m}^{(1)}(\mathbf{d})t+\frac{1}{2!}\tilde{\kappa}_{S,m}^{(2)}(\mathbf{d})t^{2}+\frac{1}{3!}\tilde{\kappa}_{S,m}^{(3)}(\mathbf{d})t^{3}+\frac{1}{4!}\tilde{\kappa}_{S,m}^{(4)}(\mathbf{d})t^{4}}$,
the discriminant of $\tilde{K}'_{y,4,S,m}(t;\mathbf{d})$ is $\Delta:=\tilde{\kappa}_{S,m}^{(3)^{2}}-2\tilde{\kappa}_{S,m}^{(2)}\tilde{\kappa}_{S,m}^{(4)}$.
\item [(b)] The point $-\tilde{\kappa}_{S,m}^{(3)}/(2\tilde{\kappa}_{S,m}^{(2)})$
should not be an element of $(t_{l}, t_{u})$, i.e., $(t_{l}, t_{u})=(-\infty, \infty)\setminus \{-\tilde{\kappa}_{S,m}^{(3)}/(2\tilde{\kappa}_{S,m}^{(2)})\}$.
\end{tablenotes}
\par\end{centering}
\end{threeparttable}
\end{table}

\subsubsection{The PDD-MCS Method}

Depending on component or system reliability analysis, let $\tilde{\Omega}_{F,S,m}:=\{\mathbf{x}:\tilde{y}_{S,m}(\mathbf{x})<0\}$
or $\tilde{\Omega}_{F,S,m}:=\{\mathbf{x}:\cup_{i}\tilde{y}_{i,S,m}(\mathbf{x})<0\}$
or $\tilde{\Omega}_{F,S,m}:=\{\mathbf{x}:\cap_{i}\tilde{y}_{i,S,m}(\mathbf{x})<0\}$
be an approximate failure set as a result of $S$-variate, $m$th-order
PDD approximations $\tilde{y}_{S,m}(\mathbf{X})$ of $y(\mathbf{X})$
or $\tilde{y}_{i,S,m}(\mathbf{X})$ of $y_{i}(\mathbf{X})$. Then
the PDD-MCS estimate of the failure probability $P_{F}(\mathbf{d})$
is
\begin{equation}
\tilde{P}_{F,PM}(\mathbf{d})=\mathbb{E}_{\mathbf{d}}\left[I_{\tilde{\Omega}_{F,S,m}}(\mathbf{X})\right]=\lim\limits _{L\rightarrow\infty}\frac{1}{L}\sum\limits _{l=1}^{L}I_{\tilde{\Omega}_{F,S,m}}(\mathbf{x}^{(l)}),\label{pfmcs}
\end{equation}
where the subscript "PM" stands for PDD-MCS, $L$ is the sample size, $\mathbf{x}^{(l)}$ is the $l$th realization
of $\mathbf{X}$, and $I_{\tilde{\Omega}_{F,S,m}}(\mathbf{x})$ is
another indicator function, which is equal to \emph{one} when $\mathbf{x}\in\tilde{\Omega}_{F,S,m}$
and \emph{zero} otherwise.

Note that the simulation of the PDD approximation in Equation \ref{pfmcs}
should not be confused with crude MCS commonly used for producing
benchmark results. The crude MCS, which requires numerical calculations
of $y(\mathbf{x}^{(l)})$ or $y_{i}(\mathbf{x}^{(l)})$ for input
samples $\mathbf{x}^{(l)},l=1,\cdots,L$, can be expensive or even
prohibitive, particularly when the sample size $L$ needs to be very
large for estimating small failure probabilities. In contrast, the
MCS embedded in PDD requires evaluations of simple analytical functions
that stem from an $S$-variate, $m$th-order approximation $\tilde{y}_{S,m}(\mathbf{x}^{(l)})$
or $\tilde{y}_{i,S,m}(\mathbf{x}^{(l)})$. Therefore, an arbitrarily
large sample size can be accommodated in the PDD-MCS method. In which
case, the PDD-MCS method also furnishes the approximate CDF $\tilde{F}_{y,PM}(\xi;\mathbf{d}):=P_{\mathbf{d}}[\tilde{y}_{S,m}(\mathbf{X})\le\xi]$
of $y(\mathbf{X})$ or even joint CDF of dependent stochastic responses,
if desired.

Although the PDD-SPA and PDD-MCS methods are both rooted in the same
PDD approximation, the former requires additional layers of approximations
to calculate the CGF and saddlepoint. Therefore, the PDD-SPA method,
when it works, is expected to be less accurate than the PDD-MCS method
at comparable computational efforts. However, the PDD-SPA method facilitates
an analytical means to estimate the probability distribution and reliability
$-$ a convenient process not supported by the PDD-MCS method. The
respective properties of both methods extend to sensitivity analysis,
presented in the following two sections.

\section{Design Sensitivity Analysis of Moments}

When solving RDO problems using gradient-based optimization algorithms,
at least first-order derivatives of the first and second moments of
a stochastic response with respect to each design variable are required.
In this section, a new method, developed by blending PDD with score
functions, for design sensitivity analysis of moments of an arbitrary
order, is presented.

\subsection{Score Functions}

Suppose that the first-order derivative of a moment $m^{(r)}(\mathbf{d})$,
where $r\in\mathbb{N}$, of a generic stochastic response $y(\mathbf{X})$
with respect to a design variable $d_{k}$, $1\le k\le M$, is sought.
Taking partial derivative of the moment with respect to $d_{k}$ and
then applying the Lebesgue dominated convergence theorem \cite{browder96},
which permits the differential and integral operators to be interchanged,
yields the sensitivity
\begin{equation}
\begin{array}{rcl}
{\displaystyle \frac{\partial m^{(r)}(\mathbf{d})}{\partial d_{k}}} & := & {\displaystyle \frac{\partial\mathbb{E}_{\mathbf{d}}\left[y^{r}(\mathbf{X})\right]}{\partial d_{k}}}\\
 & = & {\displaystyle \frac{\partial}{\partial d_{k}}}{\displaystyle \int_{\mathbb{R}^{N}}{\displaystyle y^{r}(\mathbf{x})f_{\mathbf{X}}(\mathbf{x};\mathbf{d})d\mathbf{x}}},\\
 & = & {\displaystyle \int_{\mathbb{R}^{N}}{\displaystyle y^{r}(\mathbf{x}){\displaystyle \frac{\partial\ln f_{\mathbf{X}}(\mathbf{x};\mathbf{d})}{\partial d_{k}}}f_{\mathbf{X}}(\mathbf{x};\mathbf{d})d\mathbf{x}}},\\
 & =: & {\displaystyle \mathbb{E}_{\mathbf{d}}\left[y^{r}(\mathbf{X})s_{d_{k}}^{(1)}(\mathbf{X};\mathbf{d})\right]}
\end{array}\label{momrsen}
\end{equation}
provided that $f_{\mathbf{X}}(\mathbf{x};\mathbf{d})>0$ and the derivative
$\partial\ln f_{\mathbf{X}}(\mathbf{x};\mathbf{d})\left/\partial d_{k}\right.$
exists. In last line of Equation \ref{momrsen},
$s_{d_{k}}^{(1)}(\mathbf{X};\mathbf{d}):=\partial\ln f_{\mathbf{X}}(\mathbf{X};\mathbf{d})\left/\partial d_{k}\right.$
is known as the first-order score function for the design variable
$d_{k}$ \cite{rubinstein93,rahman09}. In general, the sensitivities
are not available analytically since the moments are not either. Nonetheless,
the moments and their sensitivities have both been formulated as expectations
of stochastic quantities with respect to the same probability measure,
facilitating their concurrent evaluations in a single stochastic simulation
or analysis.
\begin{rem}
The evaluation of score functions, $s_{d_{k}}^{(1)}(\mathbf{X};\mathbf{d})$,
$k=1,\cdots,M$, requires differentiating only the PDF of $\mathbf{X}$.
Therefore, the resulting score functions can be determined easily
and, in many cases, analytically $-$ for instance, when $\mathbf{X}$
follows classical probability distributions \cite{rahman09}. If the
density function of $\mathbf{X}$ is arbitrarily prescribed, the score
functions can be calculated numerically, yet inexpensively, since
no evaluation of the performance function is involved.
\end{rem}
\,

When $\mathbf{X}$ comprises independent variables, as assumed here,
$\ln f_{\mathbf{X}}(\mathbf{X};\mathbf{d})=\sum_{i=1}^{i=N}\ln f_{X_{i}}(x_{i};\mathbf{d})$
is a sum of $N$ univariate log-density (marginal) functions of random
variables. Hence, in general, the score function for the $k$th design
variable, expressed by
\begin{equation}
s_{d_{k}}^{(1)}(\mathbf{X};\mathbf{d})={\displaystyle \sum_{i=1}^{N}}{\displaystyle {\displaystyle \frac{\partial\ln f_{X_{i}}(X_{i};\mathbf{d})}{\partial d_{k}}}}={\displaystyle \sum_{i=1}^{N}}s_{ki}(X_{i};\mathbf{d}),\label{sf}
\end{equation}
is also a sum of univariate functions $s_{ki}(X_{i};\mathbf{d}):=\partial\ln f_{X_{i}}(X_{i};\mathbf{d})\left/\partial d_{k}\right.$,
$i=1,\cdots,N$, which are the derivatives of log-density (marginal)
functions. If $d_{k}$ is a distribution parameter of a single random
variable $X_{i_{k}}$, then the score function reduces to $s_{d_{k}}^{(1)}(\mathbf{X};\mathbf{d})=\partial\ln f_{X_{i_{k}}}(X_{i_{k}};\mathbf{d})\left/\partial d_{k}\right.=:s_{ki_{k}}(X_{i_{k}};\mathbf{d})$,
the derivative of the log-density (marginal) function of $X_{i_{k}}$,
which remains a univariate function. Nonetheless, combining Equations
\ref{momrsen} and \ref{sf}, the sensitivity is obtained from
\begin{equation}
\frac{\partial m^{(r)}(\mathbf{d})}{\partial d_{k}}=\sum_{i=1}^{N}\mathbb{E}_{\mathbf{d}}\left[y^{r}(\mathbf{X})s_{ki}(X_{i};\mathbf{d})\right],\label{momrsen2}
\end{equation}
the sum of expectations of products comprising stochastic response
and log-density derivative functions with respect to the probability
measure $P_{\mathbf{d}}$, $\mathbf{d}\in\mathcal{D}$.

\subsection{Sensitivities of First- and Second-Order Moments}

For independent coordinates of $\mathbf{X}$, consider the Fourier-polynomial
expansion of the $k$th log-density derivative function
\begin{equation}
s_{ki}(X_{i};\mathbf{d})=s_{ki,\emptyset}(\mathbf{d})+{\displaystyle \sum_{j=1}^{\infty}}D_{k,ij}(\mathbf{d})\psi_{ij}(X_{i};\mathbf{d}),\label{sffpe}
\end{equation}
consisting of its own expansion coefficients
\begin{equation}
s_{ki,\emptyset}(\mathbf{d}):=\int_{\mathbb{R}}s_{ki}(x_{i};\mathbf{d})f_{X_{i}}(x_{i};\mathbf{d})dx_{i}\label{sfcoeff}
\end{equation}
and
\begin{equation}
D_{k,ij}(\mathbf{d}):=\int_{\mathbb{R}}s_{ki}(x_{i};\mathbf{d})\psi_{ij}(x_{i};\mathbf{d})f_{X_{i}}(x_{i};\mathbf{d})dx_{i}.\label{sfcoeff2}
\end{equation}
The expansion is valid if $s_{ki}$ is square integrable with respect
to the probability measure of $X_{i}$. When blended with the PDD
approximation, the score function leads to analytical or closed-form
expressions of the exact or approximate sensitivities as follows.

\subsubsection{Exact Sensitivities}

Employing Equations \ref{pddfull} and \ref{sffpe}, the product appearing on the right side of Equation
\ref{momrsen2} expands to
\begin{equation}
\begin{array}{rcl}
y^{r}(\mathbf{X})s_{ki}(X_{i};\mathbf{d}) & = & \left(y_{\emptyset}(\mathbf{d})+{\displaystyle \sum_{\emptyset\ne u\subseteq\{1,\cdots,N\}}}{\displaystyle \sum_{{\textstyle {\mathbf{j}_{|u|}\in\mathbb{N}_{0}^{|u|}\atop j_{1},\cdots,j_{|u|}\neq0}}}}\!\! C_{u\mathbf{j}_{|u|}}(\mathbf{d})\psi_{u\mathbf{j}_{|u|}}(\mathbf{X}_{u};\mathbf{d})\right)^{r}\times\\
 &  & \left(s_{ki,\emptyset}(\mathbf{d})+{\displaystyle \sum_{j=1}^{\infty}}D_{k,ij}(\mathbf{d})\psi_{ij}(X_{i};\mathbf{d})\right),
\end{array}\label{prod}
\end{equation}
encountering the same orthonormal polynomial bases that are consistent
with the probability measure $f_{\mathbf{X}}(\mathbf{x};\mathbf{d})d\mathbf{x}$.
The expectations of Equation \ref{prod} for $r=1$
and 2, aided by Propositions \ref{p1} and \ref{p2}, lead Equation
\ref{momrsen2} to
\begin{equation}
\frac{\partial m^{(1)}(\mathbf{d})}{\partial d_{k}}=\sum_{i=1}^{N}\left[y_{\emptyset}(\mathbf{d})s_{ki,\emptyset}(\mathbf{d})+{\displaystyle \sum_{j=1}^{\infty}}C_{ij}(\mathbf{d})D_{k,ij}(\mathbf{d})\right]\label{mom1sexact}
\end{equation}
and
\begin{equation}
\frac{\partial m^{(2)}(\mathbf{d})}{\partial d_{k}}=\sum_{i=1}^{N}\left[m^{(2)}(\mathbf{d})s_{ki,\emptyset}(\mathbf{d})+2y_{\emptyset}(\mathbf{d}){\displaystyle \sum_{j=1}^{\infty}}C_{ij}(\mathbf{d})D_{k,ij}(\mathbf{d})+T_{ki}\right],\label{mom2sexact}
\end{equation}
representing closed-form expressions of the sensitivities in terms
of the PDD or Fourier-polynomial expansion coefficients of the response
or log-density derivative functions. The last term on the right side of Equation \ref{mom2sexact} is
\begin{eqnarray}
T_{ki} & = & \sum_{i_{1}=1}^{N}{\displaystyle \sum_{i_{2}=1}^{N}}{\displaystyle \sum_{j_{1}=1}^{\infty}}{\displaystyle \sum_{j_{2}=1}^{\infty}}{\displaystyle \sum_{j_{3}=1}^{\infty}}C_{i_{1}j_{1}}(\mathbf{d})C_{i_{2}j_{2}}(\mathbf{d})D_{k,ij_{3}}(\mathbf{d})\times\nonumber \\
 &  & \mathbb{E_{\textrm{\textbf{d}}}}\left[\psi_{i_{1}j_{1}}(X_{i_{1}};\mathbf{d})\psi_{i_{2}j_{2}}(X_{i_{2}};\mathbf{d})\psi_{i_{3}j_{3}}(X_{i};\mathbf{d})\right],\label{tk}
\end{eqnarray}
which requires expectations of various products of three random orthonormal
polynomials and is further discussed in Subsection 3.2.4. Note that
these sensitivity equations are exact because PDD and Fourier-polynomial
expansions are exact representations of square-integrable functions.

\subsubsection{Approximate Sensitivities }

When $y(\mathbf{X})$ and $s_{ki}(X_{i};\mathbf{d})$ are replaced
by their $S$-variate, $m$th-order PDD and $m'$th-order Fourier-polynomial
approximations, respectively, the resultant sensitivity equations,
expressed by
\begin{equation}
\frac{\partial\tilde{m}_{S,m}^{(1)}(\mathbf{d})}{\partial d_{k}}:=\frac{\partial\mathbb{E}_{\mathbf{d}}\left[\tilde{y}_{S,m}(\mathbf{X})\right]}{\partial d_{k}}=\sum_{i=1}^{N}\left[y_{\emptyset}(\mathbf{d})s_{ki,\emptyset}(\mathbf{d})+{\displaystyle \sum_{j=1}^{m_{\min}}}C_{ij}(\mathbf{d})D_{k,ij}(\mathbf{d})\right]\label{mom1spdd}
\end{equation}
and
\begin{equation}
\frac{\partial\tilde{m}_{S,m}^{(2)}(\mathbf{d})}{\partial d_{k}}:=\frac{\partial\mathbb{E}_{\mathbf{d}}\left[\tilde{y}_{S,m}^{2}(\mathbf{X})\right]}{\partial d_{k}}=\sum_{i=1}^{N}\left[\tilde{m}_{S,m}^{(2)}(\mathbf{d})s_{ki,\emptyset}(\mathbf{d})+2y_{\emptyset}(\mathbf{d}){\displaystyle \sum_{j=1}^{m_{\min}}}C_{ij}(\mathbf{d})D_{k,ij}(\mathbf{d})+\tilde{T}_{ki,m,m'}\right],\label{mom2spdd}
\end{equation}
where $m_{\textrm{min}}:=\min(m,m')$ and
\begin{eqnarray}
\tilde{T}_{ki,m,m'} & = & \sum_{i_{1}=1}^{N}{\displaystyle \sum_{i_{2}=1}^{N}}{\displaystyle \sum_{j_{1}=1}^{m}}{\displaystyle \sum_{j_{2}=1}^{m}}{\displaystyle \sum_{j_{3}=1}^{m'}}C_{i_{1}j_{1}}(\mathbf{d})C_{i_{2}j_{2}}(\mathbf{d})D_{k,ij_{3}}(\mathbf{d})\times\nonumber \\
 &  & \mathbb{E_{\textrm{\textbf{d}}}}\left[\psi_{i_{1}j_{1}}(X_{i_{1}};\mathbf{d})\psi_{i_{2}j_{2}}(X_{i_{2}};\mathbf{d})\psi_{ij_{3}}(X_{i};\mathbf{d})\right],\label{tkmmprime}
\end{eqnarray}
become approximate, relying on the truncation parameters $S$, $m$,
and $m'$ in general. At appropriate limits, the approximate sensitivities
of the moments converge to exactness as described by Proposition \ref{p3}.
\begin{prop}
\label{p3}Let $\tilde{y}_{S,m}(\mathbf{X})$ be an $S$-variate,
$m$th-order PDD approximation of a square-integrable function $y(\mathbf{X})$,
where $\mathbf{X}=(X_{1},\cdots,X_{N})\in\mathbb{R}^{N}$ comprises
independent random variables with marginal probability distributions
$f_{X_{i}}(x_{i};\mathbf{d})$, $i=1,\cdots,N$, and $\mathbf{d}=(d_{1},\cdots,d_{M})\in\mathcal{D}$
is a design vector with non-empty closed set $\mathcal{D}\subseteq\mathbb{R}^{M}$.
Given the distribution parameter $d_{k}$, let the $k$th log-density
derivative function $s_{ki}(X_{i};\mathbf{d})$ of the $i$th random
variable $X_{i}$ be square integrable. Then for $k=1,\cdots M,$
\begin{equation}
\lim_{S\to N,\, m,m'\to\infty}\frac{\partial\tilde{m}_{S,m}^{(1)}(\mathbf{d})}{\partial d_{k}}=\frac{\partial m^{(1)}(\mathbf{d})}{\partial d_{k}}
\end{equation}
and
\begin{equation}
\lim_{S\to N,\, m,m'\to\infty}\frac{\partial\tilde{m}_{S,m}^{(2)}(\mathbf{d})}{\partial d_{k}}=\frac{\partial m^{(2)}(\mathbf{d})}{\partial d_{k}}.
\end{equation}
\end{prop}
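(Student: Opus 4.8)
The plan is to handle the two limits separately, in each case writing the exact sensitivity as an $L^{2}$-convergent Fourier/PDD series and the approximate sensitivity as a partial sum of that same series, and then showing that the remainder vanishes as $m_{\min}=\min(m,m')\to\infty$ and $S\to N$.

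For the first moment, subtracting Equation \ref{mom1spdd} from Equation \ref{mom1sexact} cancels the constant-mode terms $y_{\emptyset}(\mathbf{d})s_{ki,\emptyset}(\mathbf{d})$ identically, leaving only the tail $\sum_{i=1}^{N}\sum_{j>m_{\min}}C_{ij}(\mathbf{d})D_{k,ij}(\mathbf{d})$. By the Cauchy--Schwarz inequality this is dominated by $\sum_{i=1}^{N}(\sum_{j>m_{\min}}C_{ij}^{2})^{1/2}(\sum_{j>m_{\min}}D_{k,ij}^{2})^{1/2}$. Square integrability of $y$ gives $\sum_{j}C_{ij}^{2}\le m^{(2)}(\mathbf{d})<\infty$ by Bessel's inequality, and square integrability of $s_{ki}$ gives $\sum_{j}D_{k,ij}^{2}<\infty$; hence each tail factor tends to zero and, the sum over the finitely many $i$ being finite, the first-moment limit follows. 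Note that $S$ plays no role here, since only univariate modes survive the expectation taken against the univariate score function.

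For the second moment I would write the difference of Equations \ref{mom2sexact} and \ref{mom2spdd} as $\sum_{i=1}^{N}$ of three pieces: (A) $s_{ki,\emptyset}(\mathbf{d})[m^{(2)}(\mathbf{d})-\tilde{m}_{S,m}^{(2)}(\mathbf{d})]$; (B) $2y_{\emptyset}(\mathbf{d})\sum_{j>m_{\min}}C_{ij}(\mathbf{d})D_{k,ij}(\mathbf{d})$; and (C) $T_{ki}-\tilde{T}_{ki,m,m'}$. Piece (A) vanishes because $\tilde{m}_{S,m}^{(2)}\to m^{(2)}$ in the mean-square sense already established for Equation \ref{mom2}, with $s_{ki,\emptyset}$ a fixed constant; piece (B) vanishes by the same Cauchy--Schwarz/Bessel tail estimate used for the first moment.

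Piece (C) is the crux. First I would use Propositions \ref{p1} and \ref{p2} to collapse the triple-product expectation $\mathbb{E}_{\mathbf{d}}[\psi_{i_{1}j_{1}}\psi_{i_{2}j_{2}}\psi_{ij_{3}}]$: by independence it factorizes over the distinct variable indices, and any index occurring only once contributes a zero-mean factor, so with only three polynomials present a nonzero value forces $i_{1}=i_{2}=i$. Both $T_{ki}$ and its truncation therefore reduce to single-variable triple sums, giving $T_{ki}=\mathbb{E}_{\mathbf{d}}[y_{\{i\}}^{2}(X_{i})\,s_{ki}^{\mathrm{nc}}(X_{i})]$ and $\tilde{T}_{ki,m,m'}=\mathbb{E}_{\mathbf{d}}[(P_{m}y_{\{i\}})^{2}\,P_{m'}s_{ki}^{\mathrm{nc}}]$, where $y_{\{i\}}=\sum_{j}C_{ij}\psi_{ij}$, $s_{ki}^{\mathrm{nc}}=\sum_{j}D_{k,ij}\psi_{ij}$, and $P_{m},P_{m'}$ denote degree truncations. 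Splitting $T_{ki}-\tilde{T}_{ki,m,m'}=\mathbb{E}_{\mathbf{d}}[y_{\{i\}}^{2}(s_{ki}^{\mathrm{nc}}-P_{m'}s_{ki}^{\mathrm{nc}})]+\mathbb{E}_{\mathbf{d}}[(y_{\{i\}}^{2}-(P_{m}y_{\{i\}})^{2})P_{m'}s_{ki}^{\mathrm{nc}}]$, the first term vanishes as $m'\to\infty$ by Cauchy--Schwarz once $y_{\{i\}}\in L^{4}$ (so that $y_{\{i\}}^{2}\in L^{2}$) and $P_{m'}s_{ki}^{\mathrm{nc}}\to s_{ki}^{\mathrm{nc}}$ in $L^{2}$, while the second, after factoring $y_{\{i\}}^{2}-(P_{m}y_{\{i\}})^{2}=(y_{\{i\}}-P_{m}y_{\{i\}})(y_{\{i\}}+P_{m}y_{\{i\}})$, needs $P_{m}y_{\{i\}}\to y_{\{i\}}$ in $L^{4}$ rather than merely $L^{2}$. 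This $L^{4}$ (rather than $L^{2}$) convergence is the one genuinely delicate point, since partial sums of an orthonormal expansion need not converge in $L^{p}$ for $p>2$ in general. I would close the gap either by adding a fourth-moment/product-integrability hypothesis on $y_{\{i\}}$ and $s_{ki}$, strong enough to make the reduced triple series converge absolutely, so that the partial sum $\tilde{T}_{ki,m,m'}$ converges to $T_{ki}$ automatically, or by invoking an $L^{4}$-boundedness property of the particular orthonormal polynomial system in use. With (A), (B), and (C) all tending to zero and the outer sum over the finitely many $i$ finite, the second-moment limit follows.
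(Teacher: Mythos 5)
Your proposal takes the same overall route as the paper's proof---compare the truncated sensitivities, Equations \ref{mom1spdd} and \ref{mom2spdd}, with the exact ones, Equations \ref{mom1sexact} and \ref{mom2sexact}, and show that the truncation error vanishes---but it supplies the analysis that the paper omits. The paper's proof is a single term-by-term limit: it lets $S\to N$, $m,m'\to\infty$ and simply ``recognizes'' that $\tilde{m}_{S,m}^{(2)}(\mathbf{d})\to m^{(2)}(\mathbf{d})$ and $\tilde{T}_{ki,m,m'}\to T_{ki}$, with no estimates. Your Cauchy--Schwarz/Bessel bound on the tail $\sum_{j>m_{\min}}C_{ij}(\mathbf{d})D_{k,ij}(\mathbf{d})$ disposes of the first moment and of your pieces (A) and (B) rigorously, using nothing beyond the square-integrability hypotheses actually stated in Proposition \ref{p3}; your observation that $S$ is irrelevant to the first moment matches the paper's own remark following the proposition.

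Where you go beyond the paper is piece (C), and there you have put your finger on a genuine gap in the paper's own argument rather than in yours. The convergence $\tilde{T}_{ki,m,m'}\to T_{ki}$, asserted without justification in the paper, does not follow from square integrability of $y$ and $s_{ki}$ alone. Your reduction of the triple-product expectation to the diagonal case $i_{1}=i_{2}=i$ is correct (any variable index appearing exactly once contributes a zero-mean factor, by independence and Proposition \ref{p1}), and after that reduction the claim is $\mathbb{E}_{\mathbf{d}}\bigl[(P_{m}y_{\{i\}})^{2}\,P_{m'}s_{ki}^{\mathrm{nc}}\bigr]\to\mathbb{E}_{\mathbf{d}}\bigl[y_{\{i\}}^{2}\,s_{ki}^{\mathrm{nc}}\bigr]$, which needs $L^{4}$-type control of $y_{\{i\}}$ and its partial sums; without some such hypothesis the limiting quantity $T_{ki}$---equivalently the term $\mathbb{E}_{\mathbf{d}}[y^{2}(\mathbf{X})s_{ki}(X_{i};\mathbf{d})]$ appearing in Equation \ref{momrsen2} with $r=2$---need not even be finite, so the proposition implicitly presumes more than it hypothesizes. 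Of your two proposed remedies, the added fourth-moment/absolute-convergence hypothesis is the more robust one: uniform-in-degree $L^{4}$ bounds for partial sums of orthonormal expansions are themselves delicate (fixed-degree hypercontractive estimates degrade as the degree grows), so strengthening the integrability assumptions is the cleaner way to close the argument. With that strengthening stated explicitly, your proof is complete and is, in effect, the paper's proof made rigorous.
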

\begin{proof}
Taking the limits $S\to N$, $m\to\infty$, and $m'\to\infty$ on
Equations \ref{mom1spdd} and \ref{mom2spdd} and recognizing $\tilde{m}_{S,m}^{(2)}(\mathbf{d})\to m^{(2)}(\mathbf{d})$
and $\tilde{T}_{ki,m,m'}\to T_{ki}$,
\begin{eqnarray}
\lim\limits _{S\rightarrow N,\ m,m'\rightarrow\infty}\frac{\partial\tilde{m}_{S,m}^{(1)}(\mathbf{d})}{\partial d_{k}} & = & \lim\limits _{S\rightarrow N,\ m,m'\rightarrow\infty}\;\sum_{i=1}^{N}\left[y_{\emptyset}(\mathbf{d})s_{ki,\emptyset}(\mathbf{d})+{\displaystyle \sum_{j=1}^{m_{\min}}}C_{ij}(\mathbf{d})D_{k,ij}(\mathbf{d})\right]\nonumber \\
 & = & \sum_{i=1}^{N}\left[y_{\emptyset}(\mathbf{d})s_{ki,\emptyset}(\mathbf{d})+{\displaystyle \sum_{j=1}^{\infty}}C_{ij}(\mathbf{d})D_{k,ij}(\mathbf{d})\right]\\
 & = & \frac{\partial m^{(1)}(\mathbf{d})}{\partial d_{k}}\nonumber
\end{eqnarray}
and
\begin{eqnarray}
 &  & \lim\limits _{s\rightarrow N,\ m,m'\rightarrow\infty}\frac{\partial\tilde{m}_{S,m}^{(2)}(\mathbf{d})}{\partial d_{k}}\nonumber \\
 & = & \lim\limits _{S\rightarrow N,\ m,m'\rightarrow\infty}\;\sum_{i=1}^{N}\left[\tilde{m}_{S,m}^{(2)^{2}}(\mathbf{d})s_{ki,\emptyset}(\mathbf{d})+2y_{\emptyset}(\mathbf{d}){\displaystyle \sum_{j=1}^{m_{\min}}}C_{ij}(\mathbf{d})D_{k,ij}(\mathbf{d})+\tilde{T}_{ki,m,m'}\right]\nonumber \\
 & = & \sum_{i=1}^{N}\left[m^{(2)^{2}}(\mathbf{d})s_{ki,\emptyset}(\mathbf{d})+2y_{\emptyset}(\mathbf{d}){\displaystyle \sum_{j=1}^{\infty}}C_{ij}(\mathbf{d})D_{k,ij}(\mathbf{d})+T_{ki}\right]\\
 & = & \frac{\partial m^{(2)}(\mathbf{d})}{\partial d_{k}},\nonumber
\end{eqnarray}
where the last lines follow from Equations \ref{mom1sexact} and \ref{mom2sexact}.
\end{proof}

Of the two sensitivities, $\partial\tilde{m}_{S,m}^{(1)}(\mathbf{d})/\partial d_{k}$
does not depend on $S$, meaning that both the univariate ($S=1$)
and bivariate ($S=2$) approximations, given the same $m_{\min}<\infty$,
form the same result, as displayed in Equation \ref{mom1spdd}. However, the sensitivity equations of $\partial\tilde{m}_{S,m}^{(2)}(\mathbf{d})/\partial d_{k}$
for the univariate and bivariate approximations vary with respect
to $S$, $m$, and $m'$. For instance, the univariate approximation
results in
\begin{equation}
\frac{\partial\tilde{m}_{1,m}^{(2)}(\mathbf{d})}{\partial d_{k}}=\sum_{i=1}^{N}\left[\tilde{m}_{1,m}^{(2)}(\mathbf{d})s_{ki,\emptyset}(\mathbf{d})+2y_{\emptyset}(\mathbf{d}){\displaystyle \sum_{j=1}^{m_{\min}}}C_{ij}(\mathbf{d})D_{k,ij}(\mathbf{d})+\tilde{T}_{ki,m,m'}\right],
\end{equation}
whereas the bivariate approximation yields
\begin{equation}
\frac{\partial\tilde{m}_{2,m}^{(2)}(\mathbf{d})}{\partial d_{k}}=\sum_{i=1}^{N}\left[\tilde{m}_{2,m}^{(2)}(\mathbf{d})s_{ki,\emptyset}(\mathbf{d})+2y_{\emptyset}(\mathbf{d}){\displaystyle \sum_{j=1}^{m_{\min}}}C_{ij}(\mathbf{d})D_{k,ij}(\mathbf{d})+\tilde{T}_{ki,m,m'}\right].
\end{equation}
Analogous to the moments, the univariate and bivariate approximations
of the sensitivities of the moments involve only univariate and at
most bivariate expansion coefficients of $y$, respectively. Since
the expansion coefficients of log-density derivative functions do
not involve the response function, no additional cost is incurred
from response analysis. In other words, the effort required to obtain
the statistical moments of a response also furnishes the sensitivities
of moments, a highly desirable trait for efficiently solving RDO problems.
\begin{rem}
Since the derivatives of log-density functions are univariate functions,
their expansion coefficients require only univariate integration for
their evaluations. When $X_{i}$ follows classical distributions $-$
for instance, the Gaussian distribution $-$ then the coefficients
can be calculated exactly or analytically. Otherwise, numerical quadrature
is required. Nonetheless, there is no need to employ dimension-reduction
integration for calculating the expansion coefficients of
the derivatives of log-density functions.
\end{rem}

\subsubsection{Special Cases}

There exist two special cases when the preceding expressions of the
sensitivities of moments simplify slightly. They are contingent on
how a distribution parameter affects the probability distributions
of random variables.

First, when $\mathbf{X}$ comprises independent variables such that
$d_{k}$ is a distribution parameter of a single random variable,
say, $X_{i_{k}}$, $1\le i_{k}\le N$ , then $s_{ki_{k}}(X_{i_{k}};\mathbf{d})$
$-$ the $k$th log-density derivative function of $X_{i_{k}}$ $-$
is the only relevant function of interest. Consequently, the expansion
coefficients $s_{ki,\emptyset}(\mathbf{d})=s_{ki_{k},\emptyset}(\mathbf{d})$
(say) and $D_{k,ij}(\mathbf{d})=D_{k,i_{k}j}(\mathbf{d})$ (say),
if $i=i_{k}$ and \emph{zero} otherwise. Moreover, the outer sums
of Equations \ref{mom1spdd} and \ref{mom2spdd} vanish, yielding
\begin{equation}
\frac{\partial\tilde{m}_{S,m}^{(1)}(\mathbf{d})}{\partial d_{k}}=y_{\emptyset}(\mathbf{d})s_{ki_{k},\emptyset}(\mathbf{d})+{\displaystyle \sum_{j=1}^{m_{\min}}}C_{i_{k}j}(\mathbf{d})D_{k,i_{k}j}(\mathbf{d})\label{mom1spdd2}
\end{equation}
and
\begin{equation}
\frac{\partial\tilde{m}_{S,m}^{(2)}(\mathbf{d})}{\partial d_{k}}=\tilde{m}_{S,m}^{(2)}(\mathbf{d})s_{ki_{k},\emptyset}(\mathbf{d})+2y_{\emptyset}(\mathbf{d}){\displaystyle \sum_{j=1}^{m_{\min}}}C_{i_{k}j}(\mathbf{d})D_{k,i_{k}j}(\mathbf{d})+\tilde{T}_{ki_{k},m,m'}.\label{mom2spdd2}
\end{equation}

Second, when $\mathbf{X}$ consists of independent and identical variables,
then $s_{ki}(X_{i};\mathbf{d})=s_{k}(X_{i};\mathbf{d})$ (say), that
is, the $k$th log-density derivative functions of all random variables
are alike. Accordingly, the expansion coefficients $s_{ki,\emptyset}(\mathbf{d})=s_{k,\emptyset}(\mathbf{d})$
(say) and $D_{k,ij}(\mathbf{d})=D_{k,j}(\mathbf{d})$ (say) for all
$i=1,\cdots,N$, producing
\begin{equation}
\frac{\partial\tilde{m}_{S,m}^{(1)}(\mathbf{d})}{\partial d_{k}}=\sum_{i=1}^{N}\left[y_{\emptyset}(\mathbf{d})s_{k,\emptyset}(\mathbf{d})+{\displaystyle \sum_{j=1}^{m_{\min}}}C_{ij}(\mathbf{d})D_{k,j}(\mathbf{d})\right]\label{mom1spdd3}
\end{equation}
and
\begin{equation}
\frac{\partial\tilde{m}_{S,m}^{(2)}(\mathbf{d})}{\partial d_{k}}=\sum_{i=1}^{N}\left[\tilde{m}_{S,m}^{(2)}(\mathbf{d})s_{k,\emptyset}(\mathbf{d})+2y_{\emptyset}(\mathbf{d}){\displaystyle \sum_{j=1}^{m_{\min}}}C_{ij}(\mathbf{d})D_{k,j}(\mathbf{d})+\tilde{T}_{ki,m,m'}\right].\label{mom2spdd3}
\end{equation}
It is important to clarify that the first special case, that is, Equations
\ref{mom1spdd2} and \ref{mom2spdd2}, coincide with those presented
in a previous work \cite{ren12} by the authors. However, the second
case, that is, Equations \ref{mom1spdd3} and \ref{mom2spdd3}, including
the generalized version, that is, Equations \ref{mom1spdd} and \ref{mom2spdd},
are new. The results of sensitivity equations from these two special
cases will be discussed in the Numerical Examples section.

\subsubsection{Evaluation of $\tilde{T}_{ki,m,m'}$}

The evaluation of $\tilde{T}_{ki,m,m'}$ in Equation \ref{tkmmprime}
requires expectations of various products of three random orthonormal
polynomials. The expectations vanish when $i_{1}\ne i_{2}\ne i_{3}$,
regardless of the probability measures of random variables. For classical
polynomials, such as Hermite, Laguerre, and Legendre polynomials,
there exist formulae for calculating the expectations when $i_{1}=i_{2}=i_{3}=i\;(\mathrm{say})$.

When $X_{i}$ follows the standard Gaussian distribution,
the expectations are determined from the properties of univariate
Hermite polynomials, yielding \cite{busbridge48}
\begin{equation}
\mathbb{E}_{\mathbf{d}}\left[\psi_{ij_{1}}(X_{i};\mathbf{d})\psi_{ij_{2}}(X_{i};\mathbf{d})\psi_{ij_{3}}(X_{i};\mathbf{d})\right]={\displaystyle \frac{\sqrt{j_{1}!j_{2}!j_{3}!}}{(q-j_{1})!(q-j_{2})!(q-j_{3})!}},
\end{equation}
if $q\in\mathbb{N}$, $2q=j_{1}+j_{2}+j_{3}$, and
$j_{1},j_{2},j_{3}\le q$, and \emph{zero} otherwise. When $X_{i}$ follows the exponential
distribution with unit mean, the expectations are attained from the
properties of univariate Laguerre polynomials, producing \cite{kleindienst93}
\begin{equation}
\begin{array}{rcl}
 &  & \mathbb{E}_{\mathbf{d}}\left[\psi_{ij_{1}}(X_{i};\mathbf{d})\psi_{ij_{2}}(X_{i};\mathbf{d})\psi_{ij_{3}}(X_{i};\mathbf{d})\right]\\
 & = & {\displaystyle (-1)^{j_{1}+j_{2}+j_{3}}}{\displaystyle {\displaystyle \sum\limits _{v=v_{\min}}^{v_{\max}}\frac{(j_{1}+j_{2}-v)!2^{j_{3}-j_{1}-j_{2}+2v}}{v!(j_{1}-v)!(j_{2}-v)!}}}{\displaystyle \binom{v}{j_{3}-j_{1}-j_{2}+2v}},
\end{array}
\end{equation}
if $|j_{1}-j_{2}|\le j_{3}\le j_{1}+j_{2}$, and
\emph{zero} otherwise, where
$v_{\min}=\frac{1}{2}(j_{1}+j_{2}+1-j_{3})$, $v_{\max}=\min(j_{1},j_{2},j_{1}+j_{2}-j_{3})$.
When $X_{i}$ follows the uniform distribution on the interval $[-1,1]$,
the expectations are obtained from the properties
of univariate Legendre polynomials, forming \cite{kleindienst93}
\begin{equation}
\begin{array}{rcl}
 &  & \mathbb{E}_{\mathbf{d}}\left[\psi_{ij_{1}}(X_{i};\mathbf{d})\psi_{ij_{2}}(X_{i};\mathbf{d})\psi_{ij_{3}}(X_{i};\mathbf{d})\right]\\
 & = & {\displaystyle \frac{1}{2}\sqrt{2(2j_{1}+1)(2j_{2}+1)(2j_{3}+1)}\times}\\
 &  & {\displaystyle \frac{(j_{1}+j_{2}-j_{3}-1)!!(j_{2}+j_{3}-j_{1}-1)!!(j_{1}+j_{2}+j_{3})!!(j_{1}+j_{3}-j_{2}-1)!!}{(j_{1}+j_{2}-j_{3})!!(j_{2}+j_{3}-j_{1})!!(j_{1}+j_{2}+j_{3}+1)!!(j_{1}+j_{3}-j_{2})!!},}
\end{array}\label{3termUniform}
\end{equation}
if $q\in\mathbb{N}$, $2q=j_{1}+j_{2}+j_{3}$, and
$|j_{1}-j_{2}|\le j_{3}\le j_{1}+j_{2}$, and \emph{zero} otherwise. The symbol $!!$ in Equation \ref{3termUniform} denotes
the double factorial. However, deriving a master formula for arbitrary
probability distributions of $X_{i}$ is impossible. In which case,
the non-trivial solution of the expectation can be obtained by numerical
integration of elementary functions.

\subsection{Sensitivities of Higher-Order Moments}

No closed-form or analytical expressions are possible for calculating
sensitivities of higher-order ($2<r<\infty)$ moments by the PDD approximation.
Two options, consistent with statistical moment analysis in Subsection
2.2, are proposed for sensitivity analysis.

In Option I, the sensitivity is obtained by replacing $y$ by $\tilde{y}_{S,m}$
in Equation \ref{momrsen} and utilizing Equations \ref{yrPDD} and
\ref{sf}, resulting in
\begin{equation}
\begin{array}{rcl}
{\displaystyle \frac{\partial\tilde{m}_{S,m}^{(r)}(\mathbf{d})}{\partial d_{k}}} & = & \int_{\mathbb{R}^{N}}\tilde{y}_{S,m}^{r}(\mathbf{x})s_{d_{k}}^{(1)}(\mathbf{x};\mathbf{d})f_{\mathbf{\mathbf{X}}}(\mathbf{x};\mathbf{d})d\mathbf{x}\\
 & = & g_{\emptyset}(\mathbf{d}){\displaystyle \sum_{i=1}^{N}}\int_{\mathbb{R}}s_{ki}(x_{i};\mathbf{d})f_{X_{i}}(x_{i};\mathbf{d})dx_{i}+\\
 &  & {\displaystyle \sum_{i=1}^{N}}{\displaystyle \sum_{{\textstyle {\emptyset\ne u\subseteq\{1,\cdots,N\},\, i\in u\atop 1\le|u|\le\min(rS,N)}}}\int_{\mathbb{R}^{|u|}}g_{u}(\mathbf{x}_{u};\mathbf{d})s_{ki}(x_{i};\mathbf{d})f_{\mathbf{\mathbf{X}}_{u}}(\mathbf{x}_{u};\mathbf{d})d\mathbf{x}_{u}}+\\
 &  & {\displaystyle \sum_{i=1}^{N}}{\displaystyle \sum_{{\textstyle {\emptyset\ne u\subseteq\{1,\cdots,N\},\, i\notin u\atop 1\le|u|\le\min(rS,N)}}}\int_{\mathbb{R}^{|u|}}g_{u}(\mathbf{x}_{u};\mathbf{d})f_{\mathbf{\mathbf{X}}_{u}}(\mathbf{x}_{u};\mathbf{d})d\mathbf{x}_{u}}\times\\
 &  & \int_{\mathbb{R}}s_{ki}(x_{i};\mathbf{d})f_{X_{i}}(x_{i};\mathbf{d})dx_{i},
\end{array}\label{momrsen3}
\end{equation}
which involves at most $\min(rS,N)$-dimensional integrations. Similar
to statistical moment analysis, this option becomes impractical when
$\min(rS,N)$ is large or numerous $\min(rS,N)$-dimensional
integrations are required.

In contrast, the sensitivity in Option II is attained by replacing
$\tilde{y}_{S,m}^{r}$ by $\tilde{z}_{\bar{S},\bar{m}}$ in the first
line of Equation \ref{momrsen3}, yielding
\begin{equation}
\begin{array}{rcl}
 &  & {\displaystyle \frac{\partial\tilde{m}_{S,m}^{(r)}(\mathbf{d})}{\partial d_{k}}}\\
 & \cong & \int_{\mathbb{R}^{N}}\tilde{z}_{\bar{S},\bar{m}}(\mathbf{x})s_{d_{k}}^{(1)}(\mathbf{x};\mathbf{d})f_{\mathbf{\mathbf{X}}}(\mathbf{x};\mathbf{d})d\mathbf{x}\\
 & = & z_{\emptyset}(\mathbf{d}){\displaystyle \sum_{i=1}^{N}}\int_{\mathbb{R}}s_{ki}(x_{i};\mathbf{d})f_{X_{i}}(x_{i};\mathbf{d})dx_{i}+\\
 &  & {\displaystyle \sum_{i=1}^{N}}{\displaystyle \sum_{{\textstyle {\emptyset\ne u\subseteq\{1,\cdots,N\}\atop 1\le|u|\le\bar{S},\, i\in u}}}}{\displaystyle \sum_{{\textstyle {\mathbf{j}_{|u|}\in\mathbb{N}_{0}^{|u|},||\mathbf{j}_{|u|}||_{\infty}\le\bar{m}\atop j_{1},\cdots,j_{|u|}\neq0}}}}\bar{C}_{u\mathbf{j}_{|u|}}(\mathbf{d})\int_{\mathbb{R}^{|u|}}\psi_{u\mathbf{j}_{|u|}}(\mathbf{x}_{u};\mathbf{d})s_{ki}(x_{i};\mathbf{d})f_{\mathbf{\mathbf{X}}_{u}}(\mathbf{x}_{u};\mathbf{d})d\mathbf{x}_{u},
\end{array}\label{momrsen4}
\end{equation}
requiring at most $\bar{S}$-dimensional integrations of at most
$\bar{m}$th-order polynomials, where the terms
related to $i\notin u$ vanish as per Proposition \ref{p1}. Therefore, a significant gain in efficiency
is possible in Option II for sensitivity analysis as well. The sensitivity
equations further simplify for special cases, as explained in Section
3.2. Nonetheless, numerical integrations are necessary for calculating
the sensitivities by either option.

\section{Design Sensitivity Analysis of Reliability}

When solving RBDO problems using gradient-based optimization algorithms,
at least first-order derivatives of the failure probability with respect
to each design variable is required. Two methods for the sensitivity
analysis of the failure probability, named the PDD-SPA and PDD-MCS
methods, are presented.

\subsection{The PDD-SPA Method}

Suppose that the first-order derivative $\partial\tilde{F}_{y,PS}(\xi;\mathbf{d})/\partial d_{k}$
of the CDF $\tilde{F}_{y,PS}(\xi;\mathbf{d})$ of $\tilde{y}_{S,m}(\mathbf{X})$,
obtained by the PDD-SPA method, with respect to a design variable
$d_{k}$, is desired. Applying the chain rule on the derivative of
Equation \ref{lr},
\begin{equation}
\frac{\partial\tilde{F}_{y,PS}(\xi;\mathbf{d})}{\partial d_{k}}=\sum\limits _{r=1}^{Q}\left(\frac{\partial\tilde{F}_{y,PS}}{\partial w}\frac{\partial w}{\partial\tilde{\kappa}_{S,m}^{(r)}}+\frac{\partial\tilde{F}_{y,PS}}{\partial v}\frac{\partial v}{\partial\tilde{\kappa}_{S,m}^{(r)}}\right)\frac{\partial\tilde{\kappa}_{S,m}^{(r)}}{\partial d_{k}}\label{pdspdd}
\end{equation}
is obtained via the partial derivatives
\begin{equation}
\frac{\partial\tilde{F}_{y,PS}}{\partial w}=\phi(w)\left(\frac{w}{v}-\frac{1}{w^{2}}\right),\;\frac{\partial\tilde{F}_{y,PS}}{\partial v}=\frac{\phi(w)}{v^{2}},\label{pdspdd2}
\end{equation}
\begin{eqnarray}
\frac{\partial\tilde{\kappa}_{S,m}^{(r)}}{\partial d_{k}}=\left\{ \begin{array}{l@{\quad:\quad}l}
{\displaystyle \frac{\partial\tilde{m}_{S,m}^{(1)}(\mathbf{d})}{\partial d_{k}}} & r=1,\\
{\displaystyle \frac{\partial\tilde{m}_{S,m}^{(r)}(\mathbf{d})}{\partial d_{k}}-\sum\limits _{p=1}^{r-1}\binom{r-1}{p-1}\left(\frac{\partial\tilde{\kappa}_{S,m}^{(r)}}{\partial d_{k}}\tilde{m}_{S,m}^{(r-p)}(\mathbf{d})+\tilde{\kappa}_{S,m}^{(p)}\frac{\partial\tilde{m}_{S,m}^{(r-p)}}{\partial d_{k}}\right)} & 2\le r\le Q,
\end{array}\right.\label{pdspdd3}
\end{eqnarray}
where the derivatives of moments, that is, $\partial\tilde{m}_{S,m}^{(r)}/\partial d_{k}$,
$r=1,\cdots,Q$, required to calculate the derivatives of cumulants,
are obtained using score functions, as described in Section 3. The
remaining two partial derivatives are expressed by
\begin{equation}
\frac{\partial w}{\partial\tilde{\kappa}_{S,m}^{(r)}}=\frac{\partial w}{\partial t_{s}}\frac{\partial t_{s}}{\partial\tilde{\kappa}_{S,m}^{(r)}}+\frac{\partial w}{\partial\tilde{K}_{y,Q,S,m}}\left[\frac{\partial\tilde{K}_{y,Q,S,m}}{\partial\tilde{\kappa}_{S,m}^{(r)}}+\frac{\partial\tilde{K}_{y,Q,S,m}}{\partial t_{s}}\frac{\partial t_{s}}{\partial\tilde{\kappa}_{S,m}^{(r)}}\right],\label{pdspdd4}
\end{equation}
and
\begin{equation}
\frac{\partial v}{\partial\tilde{\kappa}_{S,m}^{(r)}}=\frac{\partial v}{\partial t_{s}}\frac{\partial t_{s}}{\partial\tilde{\kappa}_{S,m}^{(r)}}+\frac{\partial v}{\partial\tilde{K}''_{y,Q,S,m}}\left[\frac{\partial\tilde{K}''_{y,Q,S,m}}{\partial\tilde{\kappa}_{S,m}^{(r)}}+\frac{\partial\tilde{K}''_{y,Q,S,m}}{\partial t_{s}}\frac{\partial t_{s}}{\partial\tilde{\kappa}_{S,m}^{(r)}}\right],\label{pdspdd5}
\end{equation}
where
\begin{equation}
\frac{\partial w}{\partial t_{s}}=\frac{\xi}{w},\;\frac{\partial w}{\partial\tilde{K}_{y,Q,S,m}}=-\frac{1}{w},\;\frac{\partial\tilde{K}_{y,Q,S,m}}{\partial t_{s}}=\xi,\;\frac{\partial v}{\partial t_{s}}=\left[\tilde{K}_{y,Q,S,m}''\right]^{\frac{1}{2}},\label{pdspdd6}
\end{equation}
\begin{equation}
\frac{\partial v}{\partial\tilde{K}''_{y,Q,S,m}}=\frac{t_{s}}{2\sqrt{\tilde{K}_{y,Q,S,m}''}},\;\frac{\partial t_{s}}{\partial\tilde{\kappa}_{S,m}^{(r)}}=-{\displaystyle \frac{{\displaystyle \frac{\partial\tilde{K}'_{y,Q,S,m}}{\partial\tilde{\kappa}_{S,m}^{(r)}}}}{{\displaystyle \frac{\partial\tilde{K}'_{y,Q,S,m}}{\partial t_{s}}}}.} \label{pdspdd7}
\end{equation}
The expressions of the partial derivatives $\partial\tilde{K}_{y,Q,S,m}/\partial\tilde{\kappa}_{S,m}^{(r)}$,
$\partial\tilde{K}'_{y,Q,S,m}/\partial\tilde{\kappa}_{S,m}^{(r)}$,
and $\partial\tilde{K}''_{y,Q,S,m}/\partial\tilde{\kappa}_{S,m}^{(r)}$,
not explicitly presented here, can be easily derived from Equation
\ref{cgf3} once the cumulants $\tilde{\kappa}_{S,m}^{(r)},\ r=1,\cdots,Q$,
and the saddlepoint $t_{s}$ are obtained. Similar sensitivity equations
were reported by Huang and Zhang \cite{huang12}. However, Equation
\ref{pdspdd} is built on the PDD approximation of a stochastic response,
as opposed to the RDD approximation used by Huang and Zhang. Furthermore,
no transformations of random variables are necessary in the proposed
PDD-SPA method.

Henceforth, the first-order derivative of the failure probability
estimate by the PDD-SPA method is easily determined from
\begin{equation}
\frac{\partial\tilde{P}_{F,PS}(\mathbf{d})}{\partial d_{k}}=\frac{\partial\tilde{F}_{y,PS}(0;\mathbf{d})}{\partial d_{k}},
\end{equation}
the sensitivity of the probability distribution evaluated at $\xi=0$.
Algorithm 1 describes the procedure of the PDD-SPA method for calculating
the reliability and its design sensitivity of a general stochastic
problem.

\begin{algorithm}
\caption{Numerical implementation of the PDD-SPA method for CDF $\tilde{F}_{y,PS}(\xi;\mathbf{d})$
and its sensitivity $\partial{\tilde{F}_{y,PS}(\xi;\mathbf{d})}/\partial{d_{k}}$
\label{alg:PDD-SPA}}

  \begin{algorithmic}[0]    \scriptsize     \State Define $\xi$ and $\mathbf{d}$      \State Specify $S$, $\bar{S}$, $m$, $\bar{m}$, and $Q$   \State Obtain the PDD approximation $\tilde{y}_{S,m}(\mathbf{X})$ \Comment{[from Equation \ref{pdd}]}   \For{$r \gets 1 \textrm{ to } Q$}       \State Calculate $\tilde{m}^{(r)}_{S,m}(\mathbf{d})$ \Comment{\begin{minipage}[t]{2.9in} [from Equation \ref{momr} for Option I, or Equation \ref{momr2} for Option II; \par\hspace{0.5mm} if $r=1$ and $2$, then Equations \ref{mom1} and \ref{mom2} can be used] \end{minipage}}        \State Calculate  $\partial{\tilde{m}^{(r)}_{S,m}(\mathbf{d})}/\partial{d_{k}} $ \Comment{\begin{minipage}[t]{2.9in} [from Equation \ref{momrsen3} for Option I, or Equation \ref{momrsen4} for Option II; \par\hspace{0.5mm} if $r=1$ and $2$, then Equations \ref{mom1spdd} and \ref{mom2spdd} can be used] \end{minipage}}        \EndFor       \For{$r \gets 1 \textrm{ to } Q$}       \State Calculate $\tilde{\kappa}^{(r)}_{S,m}(\mathbf{d})$ \Comment{[from Equation \ref{mom2cum}]}       \State Calculate  $\partial{\tilde{\kappa}^{(r)}_{S,m}(\mathbf{d})}/\partial{d_{k}} $ \Comment{[from Equation \ref{pdspdd3}]}       \EndFor      \State Obtain interval $(t_{l}, t_{u})$ for the saddlepoint    \Comment{[from Table \ref{tab:1-8case} if $Q=4$]}     \State Calculate $\tilde{K}'_{y,Q,S,m}(t_{l};\mathbf{d})$ and $\tilde{K}'_{y,Q,S,m}(t_{u};\mathbf{d})$ \Comment{[from Equation \ref{cgf3}]}     \If{$\xi\in(\tilde{K}'_{y,Q,S,m}(t_{l};\mathbf{d}),\tilde{K}'_{y,Q,S,m}(t_{u};\mathbf{d}))$ }  \State Calculate saddlepoint $t_{s}$   \Comment{[from Equations \ref{cgf3} and \ref{sp}]}       \State Calculate $\tilde{F}_{y,PS}(\xi;\mathbf{d})$  \Comment{[from Equation \ref{lr}]}     \State Calculate $\partial{\tilde{F}_{y,PS}(\xi;\mathbf{d})}/\partial{d_{k}}$  \Comment{[from Equations \ref{pdspdd}-\ref{pdspdd7}]}    \Else   \State Stop    \Comment{[the PDD-SPA method fails]}    \EndIf   \end{algorithmic}
\end{algorithm}

\subsection{The PDD-MCS Method}

Taking a partial derivative of the PDD-MCS estimate of the failure
probability in Equation \ref{pfmcs} with respect to $d_{k}$ and
then following the same arguments in deriving Equation \ref{momrsen}
produces
\begin{equation}
\begin{array}{rcl}
{\displaystyle \frac{\partial\tilde{P}_{F,PM}(\mathbf{d})}{\partial d_{k}}} & := & {\displaystyle \frac{\partial\mathbb{E}_{\mathbf{d}}\left[I_{\tilde{\Omega}_{F,S,m}}(\mathbf{X})\right]}{\partial d_{k}}}\\
 & = & \mathbb{E}_{\mathbf{d}}\left[I_{\tilde{\Omega}_{F,S,m}}(\mathbf{X})s_{d_{k}}^{(1)}(\mathbf{X};\mathbf{d})\right]\\
 & = & {\displaystyle \lim_{L\rightarrow\infty}}{\displaystyle \frac{1}{L}\sum_{l=1}^{L}}\left[I_{\tilde{\Omega}_{F,S,m}}(\mathbf{x}^{(l)})s_{d_{k}}^{(1)}(\mathbf{x}^{(l)};\mathbf{d})\right],
\end{array}\label{pfsmcs}
\end{equation}
where $L$ is the sample size, $\mathbf{x}^{(l)}$ is the $l$th realization
of $\mathbf{X}$, and $I_{\tilde{\Omega}_{F,S,m}}(\mathbf{x})$ is
the PDD-generated indicator function, which is equal to \emph{one}
when $\mathbf{x}\in\tilde{\Omega}_{F,S,m}$ and \emph{zero} otherwise.
Again, they are easily and inexpensively determined by sampling analytical
functions that describe $\tilde{y}_{S,m}$ and $s_{d_{k}}^{(1)}$.
A similar sampling procedure can be employed to
calculate the sensitivity of the PDD-MCS generated CDF $\tilde{F}_{y,PM}(\xi;\mathbf{d}):=P_{\mathbf{d}}[\tilde{y}_{S,m}(\mathbf{X})\le\xi]$.
It is important to note that the effort required to calculate the
failure probability or CDF also delivers their sensitivities, incurring
no additional cost. Setting $S=1$ or $2$ in Equations \ref{pfmcs}
and \ref{pfsmcs}, the univariate or bivariate approximation of the
failure probability and its sensitivities are determined.
\begin{rem}
It is important to recognize that no Fourier-polynomial expansions
of the derivatives of log-density functions are required or invoked
in the PDD-MCS method for sensitivity analysis of failure probability.
This is in contrast to the sensitivity analysis of the first two moments,
where such Fourier-polynomial expansions aid in generating analytical
expressions of the sensitivities. No analytical expressions are possible
in the PDD-MCS method for sensitivity analysis of reliability or probability
distribution of a general stochastic response.
\end{rem}
\,
\begin{rem}
The score function method has the nice property that it requires differentiating
only the underlying PDF $f_{\mathbf{X}}(\mathbf{x};\mathbf{d})$.
The resulting score functions can be easily and, in most cases, analytically
determined. If the performance function is not differentiable or discontinuous
$-$ for example, the indicator function that comes from reliability
analysis $-$ the proposed method still allows evaluation of the sensitivity
if the density function is differentiable. In reality, the density
function is often smoother than the performance function, and therefore
the proposed sensitivity methods will be able to calculate sensitivities
for a wide variety of complex mechanical systems.
\end{rem}

\section{Calculation of Expansion Coefficients}

The determination of PDD expansion coefficients $y_{\emptyset}(\mathbf{d})$
and $C_{u\mathbf{j}_{|u|}}(\mathbf{d})$, where $\emptyset\ne u\subseteq\{1,\cdots,N\}$
and $\mathbf{j}_{|u|}\in\mathbb{N}_{0}^{|u|}|$; $||\mathbf{j}_{|u|}||_{\infty}\le m$;
$j_{1},\cdots,j_{|u|}\neq0$, is vitally important for evaluating
the statistical moments and probabilistic characteristics, including
their design sensitivities, of stochastic responses. The coefficients,
defined in Equations \ref{ANOVA2} and \ref{coeff}, involve various
$N$-dimensional integrals over $\mathbb{R}^{N}$. For large $N$,
a full numerical integration employing an $N$-dimensional tensor
product of a univariate quadrature formula is computationally prohibitive
and is, therefore, ruled out. The authors propose that the dimension-reduction
integration scheme, developed by Xu and Rahman \cite{xu04}, followed
by numerical quadrature, be used to estimate the coefficients accurately
and efficiently.

\subsection{Dimension-Reduction Integration}

Let $\mathbf{c}=(c_{1},\cdots,c_{N})\in\mathbb{R}^{N}$, which is
commonly adopted as the mean of $\mathbf{X}$, be a reference point,
and $y(\mathbf{x}_{v},\mathbf{c}_{-v})$ represent an $|v|$-variate
RDD component function of $y(\mathbf{x})$, where $v\subseteq\{1,\cdots,N\}$
\cite{rahman11,rahman12}. Given a positive integer $S\le R\le N$,
when $y(\mathbf{x})$ in Equations \ref{ANOVA2} and \ref{coeff}
is replaced with its $R$-variate RDD approximation, the coefficients $y_{\emptyset}(\mathbf{d})$ and $C_{u\mathbf{j}_{|u|}}(\mathbf{d})$
are estimated from \cite{xu04}
\begin{equation}
y_{\emptyset}(\mathbf{d})\cong{\displaystyle \sum_{i=0}^{R}}(-1)^{i}{N-R+i-1 \choose i}\sum_{{\textstyle {v\subseteq\{1,\cdots,N\}\atop |v|=R-i}}}\!\int_{\mathbb{R}^{|v|}}{\displaystyle y(\mathbf{x}_{v},\mathbf{c}_{-v})}f_{\mathbf{X}_{v}}(\mathbf{x}_{v};\mathbf{d})d\mathbf{x}_{v}\label{pddcoeff1}
\end{equation}
 and
\begin{equation}
C_{u\mathbf{j}_{|u|}}(\mathbf{d})\cong{\displaystyle \sum_{i=0}^{R}}(-1)^{i}{N-R+i-1 \choose i}\sum_{{\textstyle {v\subseteq\{1,\cdots,N\}\atop |v|=R-i,u\subseteq v}}}\!\int_{\mathbb{R}^{|v|}}{\displaystyle y(\mathbf{x}_{v},\mathbf{c}_{-v})\psi_{u\mathbf{j}_{|u|}}(\mathbf{x}_{u})}f_{\mathbf{X}_{v}}(\mathbf{x}_{v};\mathbf{d})d\mathbf{x}_{v},\label{pddcoeff2}
\end{equation}
respectively, requiring evaluation of at most $R$-dimensional integrals.
The reduced integration facilitates calculation of the coefficients
approaching their exact values as $R\to N$, and is significantly
more efficient than performing one $N$-dimensional integration, particularly
when $R\ll N$. Hence, the computational effort is significantly lowered
using the dimension-reduction integration. For instance, when $R=1$
or $2$, Equations \ref{pddcoeff1} and \ref{pddcoeff2} involve one-,
or at most, two-dimensional integrations, respectively.

For a general function $y$, numerical integrations are still required
for performing various $|v|$-dimensional integrals over $\mathbb{R}^{|v|}$,
$0\le|v|\le R$, in Equations \ref{pddcoeff1} and \ref{pddcoeff2}.
When $R>1$, multivariate numerical integrations are conducted by
constructing a tensor product of underlying univariate quadrature
rules. For a given $v\subseteq\{1,\cdots,N\}$, $1<|v|\le R$, let
$v=\{i_{1},\cdots i_{|v|}\}$, where $1\le i_{1}<\cdots<i_{|v|}\le N$.
Denote by $\{x_{i_{p}}^{(1)},\cdots,x_{i_{p}}^{(n)}\}\subset\mathbb{R}$
a set of integration points of $x_{i_{p}}$ and by $\{w_{i_{p}}^{(1)},\cdots,w_{i_{p}}^{(n)}\}$
the associated weights generated from a chosen univariate quadrature
rule and a positive integer $n\in\mathbb{N}$. Denote by $P^{(n)}=\times_{p=1}^{p=|v|}\{x_{i_{p}}^{(1)},\cdots,x_{i_{p}}^{(n)}\}$
a rectangular grid consisting of all integration points generated
by the variables indexed by the elements of $v$. Then the coefficients
using dimension-reduction integration and numerical quadrature are
approximated by
\begin{equation}
y_{\emptyset}(\mathbf{d})\cong{\displaystyle \sum_{i=0}^{R}}(-1)^{i}{N-R+i-1 \choose i}\sum_{{\textstyle {v\subseteq\{1,\cdots,N\}\atop |v|=R-i}}}\!\sum_{\mathbf{k}_{|v|}\in P^{(n)}}w^{(\mathbf{k}_{|v|})}y(\mathbf{x}_{v}^{(\mathbf{k}_{|v|})},\mathbf{c}_{-v})\label{pddcoeff1b}
\end{equation}
and
\begin{equation}
C_{u\mathbf{j}_{|u|}}(\mathbf{d})\cong{\displaystyle \sum_{i=0}^{R}}(-1)^{i}{N-R+i-1 \choose i}\sum_{{\textstyle {v\subseteq\{1,\cdots,N\}\atop |v|=R-i,u\subseteq v}}}\!\sum_{\mathbf{k}_{|v|}\in P^{(n)}}w^{(\mathbf{k}_{|v|})}y(\mathbf{x}_{v}^{(\mathbf{k}_{|v|})},\mathbf{c}_{-v})\psi_{u\mathbf{j}_{|u|}}(\mathbf{x}_{u}^{(\mathbf{k}_{|u|})}),\label{pddcoeff2b}
\end{equation}
where $\mathbf{x}_{v}^{(\mathbf{k}_{|v|})}=\{x_{i_{1}}^{(k_{1})},\cdots,x_{i_{|v|}}^{(k_{|v|})}\}$
and $w^{(\mathbf{k}_{|v|})}=\prod_{p=1}^{p=|v|}w_{i_{p}}^{(k_{p})}$
is the product of integration weights generated by the variables indexed
by the elements of $v$. Similarly, the coefficients $z_{\emptyset}(\mathbf{d})$
and $\bar{C}_{u\mathbf{j}_{|u|}}(\mathbf{d})$ of an $\bar{S}$-variate,
$\bar{m}$th-order PDD approximation of $\tilde{y}_{S,m}^{r}(\mathbf{X})$,
required in Option II for obtaining higher-order moments and their
sensitivities, can also be estimated from the dimension-reduction
integration. For independent coordinates of $\mathbf{X}$, as assumed
here, a univariate Gauss quadrature rule is commonly used, where the
integration points and associated weights depend on the probability
distribution of $X_{i}$. They are readily available, for example,
the Gauss-Hermite or Gauss-Legendre quadrature rule, when $X_{i}$
follows Gaussian or uniform distribution. For an arbitrary probability
distribution of $X_{i}$, the Stieltjes procedure can be employed
to generate the measure-consistent Gauss quadrature formulae \cite{rahman09b,gautschi04}.
An $n$-point Gauss quadrature rule exactly integrates a polynomial
with a total degree of at most $2n-1$.

\subsection{Computational Expense}

The $S$-variate, $m$th-order PDD approximation
requires evaluations of $\sum_{k=0}^{k=S}\binom{N}{k}m^{k}$ expansion
coefficients, including $y_{\emptyset}(\mathbf{d})$. If these coefficients
are estimated by dimension-reduction integration with $R=S<N$ and,
therefore, involve at most an $S$-dimensional tensor product of an
$n$-point univariate quadrature rule depending on $m$, then the
total cost for the $S$-variate, $m$th-order approximation entails
a maximum of $\sum_{k=0}^{k=S}\binom{N}{k}n^{k}(m)$ function evaluations.
If the integration points include a common point in each coordinate
$-$ a special case of symmetric input probability density functions
and odd values of $n$ $-$ the number of function evaluations reduces
to $\sum_{k=0}^{k=S}\binom{N}{k}(n(m)-1)^{k}$. Nonetheless, the computational
complexity of the $S$-variate PDD approximation is an $S$th-order
polynomial with respect to the number of random variables or integration
points. Therefore, PDD with dimension-reduction integration of the
expansion coefficients alleviates the curse of dimensionality to an
extent determined by $S$.

\section{Numerical Examples}

Six numerical examples, comprising various mathematical functions
and solid-mechanics problems, are illustrated to examine the accuracy,
efficiency, and convergence properties of the PDD methods developed
for calculating the first-order sensitivities of statistical moments,
probability distributions, and reliability. The PDD expansion coefficients
were estimated by dimension-reduction integration with the mean input
as the reference point, $R=S$, and $n=m+1$, where $S$ and $m$
vary depending on the problem. In all examples, orthonormal polynomials
and associated Gauss quadrature rules consistent with the probability
distributions of input variables, including classical forms, if they
exist, were employed. The first three examples entail independent
and identical random variables, where $d_{k}$ is a distribution parameter
of all random variables, whereas the last three examples contain merely
independent random variables, where $d_{k}$ is a distribution parameter
of a single random variable. The sample size for the embedded simulation
of the PDD-MCS method is $10^{6}$ in Examples 2 and 3, and $10^{7}$
in Example 5. Whenever possible, the exact sensitivities were applied
to verify the proposed methods, as in Examples 1 and 3. However, in
Examples 2, 4, and 5, which do not support exact solutions, the benchmark
results were generated from at least one of two
crude MCS-based approaches: (1) crude MCS in conjunction with score
functions (crude MCS/SF), which requires sampling of both the original
function $y$ and the score function $s_{d_{k}}^{(1)}$; and (2) crude
MCS in tandem with one-percent perturbation of finite-difference analysis
(crude MCS/FD), which entails sampling of the original function $y$
only. The sample size for either version of the crude MCS is $10^{6}$
in Examples 2, 3, and 4, and $10^{7}$ in Example 5. The derivatives
of log-density functions associated with the five types of random
variables used in all examples are described in Table \ref{tab:2-score}.

\begin{table}[tbph]
\centering
\begin{threeparttable}
\caption{Derivatives of log-density functions for various probability distributions}
\label{tab:2-score}
\begin{centering}
\renewcommand*\arraystretch{1.5}
\begin{tabular}{>{\centering}p{0.58in}>{\centering}p{0.3in}>{\centering}p{1.3in}>{\centering}p{1.2in}>{\centering}p{1.3in}}
\hline
{\footnotesize Distribution}  & $\mathbf{d}$ & {\footnotesize $f_{X}(x;\mathbf{d})$}  & {\footnotesize $\frac{\partial{\ln f_{X}(x;\mathbf{d})}}{\partial{d_{1}}}$}  & {\footnotesize $\frac{\partial{\ln f_{X}(x;\mathbf{d})}}{\partial{d_{2}}}$} \tabularnewline
\hline
{\footnotesize Exponential}  &{\footnotesize$\{\lambda\}^{{\scriptsize T}}$} & {\footnotesize $\lambda\exp\left(-\lambda x\right)$; }{\footnotesize \par}
{\footnotesize $0\le x\le+\infty$}  & {\footnotesize $\frac{1}{\lambda} - x$}  & -  \tabularnewline
{\footnotesize Gaussian }  &{\footnotesize$\{\mu,\sigma\}^{{\scriptsize T}}$} & {\footnotesize $\frac{1}{\sqrt{2\pi}\sigma}\exp\left[-\frac{1}{2}\left(\frac{x-\mu}{\sigma}\right)^{2}\right]$;}{\footnotesize \par}
{\footnotesize $-\infty\le x\le+\infty$} & {\footnotesize $\frac{1}{\sigma}\left(\frac{x-\mu}{\sigma}\right)$}  & {\footnotesize $\frac{1}{\sigma}\left[\left(\frac{x-\mu}{\sigma}\right)^{2}-1\right]$}  \tabularnewline
{\footnotesize Lognormal$^{\tnote{(a)}}$}  &{\footnotesize$\{\mu,\sigma\}^{{\scriptsize T}}$} & {\footnotesize $\frac{1}{\sqrt{2\pi}x\tilde{\sigma}}\exp\left[-\frac{1}{2}\left(\frac{\ln x-\tilde{\mu}}{\tilde{\sigma}}\right)^{2}\right]$; }{\footnotesize \par}

{\footnotesize $0<x\le+\infty$} & {\footnotesize $\begin{array}{c}
-\frac{1}{\tilde{\sigma}}\frac{\partial{\tilde{\sigma}}}{\partial{\mu}}+\frac{1}{\tilde{\sigma}^{2}}\left(\frac{\ln x-\tilde{\mu}}{\tilde{\sigma}}\right)\times\\
\left[\tilde{\sigma}\frac{\partial{\tilde{\mu}}}{\partial{\mu}}+\left(\ln x-\tilde{\mu}\right)\frac{\partial{\tilde{\sigma}}}{\partial{\mu}}\right]
\end{array}$}  & {\footnotesize $\begin{array}{c}
-\frac{1}{\tilde{\sigma}}\frac{\partial{\tilde{\sigma}}}{\partial{\sigma}}+\frac{1}{\tilde{\sigma}^{2}}\left(\frac{\ln x-\tilde{\mu}}{\tilde{\sigma}}\right)\times\\
\left[\tilde{\sigma}\frac{\partial{\tilde{\mu}}}{\partial{\sigma}}+\left(\ln x-\tilde{\mu}\right)\frac{\partial{\tilde{\sigma}}}{\partial{\sigma}}\right]
\end{array}$} \tabularnewline
{\footnotesize Truncated$^{\tnote{(b)}}$ Gaussian}  &{\footnotesize$\{\mu,\sigma\}^{{\scriptsize T}}$} & {\footnotesize $\begin{array}{c}
\frac{1}{\Phi(D)-\Phi(-D)}\frac{1}{\sqrt{2\pi}\sigma}\times\\
\exp\left[-\frac{1}{2}\left(\frac{x-\mu}{\sigma}\right)^{2}\right];
\end{array}$}{\footnotesize \par}

{\footnotesize $\mu-D\le x\le\mu+D$} & {\footnotesize $\frac{1}{\Phi(D)-\Phi(-D)}\frac{1}{\sigma}\left(\frac{x-\mu}{\sigma}\right)$}  & {\footnotesize $\frac{1}{\Phi(D)-\Phi(-D)}\frac{1}{\sigma}\left[\left(\frac{x-\mu}{\sigma}\right)^{2}-1\right]$} \tabularnewline
{\footnotesize Weibull}  & {\footnotesize$\{\lambda,k\}^{{\scriptsize T}}$} & {\footnotesize $\frac{k}{\lambda}\left(\frac{x}{\lambda}\right)^{k-1}\exp\left[-\left(\frac{x}{\lambda}\right)^{k}\right]$; }{\footnotesize \par}

{\footnotesize $0\le x\le+\infty$ } & {\footnotesize $\frac{k}{\lambda}\left[\left(\frac{x}{\lambda}\right)^{k}-1\right]$}  & {\footnotesize $\begin{array}{c}\frac{1}{k}+(\ln x-\ln\lambda)\times \\  \left[1-\left(\frac{x}{\lambda}\right)^{k}\right]\end{array}$} \tabularnewline
\hline
\end{tabular}
\renewcommand*\arraystretch{1.0}
\begin{tablenotes}
\footnotesize
\item [(a)] $\tilde{\sigma}^{2}=\ln\left(1+\sigma^{2}/\mu^{2}\right)$
and $\tilde{\mu}=\ln\mu-\tilde{\sigma}^{2}/2$. The partial derivatives
of $\tilde{\mu}$ and $\tilde{\sigma}$ with respect to $\mu$ or
$\sigma$ can be easily obtained, so they are not reported here.
\item [(b)] $\Phi(\cdot)$ is the cumulative distribution
function of a standard Gaussian variable; $D>0$ is a constant.
\end{tablenotes}
\end{centering}
\end{threeparttable}
\end{table}

\subsection{Example 1: A Trigonometric-Polynomial Function}

Consider the function
\begin{equation}
y(\mathbf{X})=\mathbf{a}_{1}^{T}\mathbf{X}+\mathbf{a}_{2}^{T}\sin\mathbf{X}+\mathbf{a}_{3}^{T}\cos\mathbf{X}+\mathbf{X}^{T}\mathbf{MX},\label{trigpoly}
\end{equation}
introduced by Oakley and O'Hagan \cite{oakley04}, where $\mathbf{X}=\{X_{1},\cdots,X_{15}\}^{T}\in\mathbb{R}^{15}$
is a $15$-dimensional Gaussian input vector with mean vector $\mathbb{E}[\mathbf{X}]=\{\mu,\cdots,\mu\}^{T}\in\mathbb{R}^{15}$
and covariance matrix $\mathbb{E}[\mathbf{XX}^{T}]=\sigma^{2}\mbox{diag}[1,\cdots,1]=:\sigma^{2}\mathbf{I}\in\mathbb{R}^{15\times15}$;
$\mathbf{d}=\{\mu,\sigma\}^{T}$; $\sin\mathbf{X}:=\{\sin X_{1},\cdots,\sin X_{15}\}^{T}\in\mathbb{R}^{15}$
and $\cos\mathbf{X}:=\{\cos X_{1},\cdots,\cos X_{15}\}^{T}\in\mathbb{R}^{15}$
are compact notations for $15$-dimensional vectors of sine and cosine
functions, respectively; and $\mathbf{a}_{i}\in\mathbb{R}^{15}$,
$i=1,2,3$, and $\mathbf{M}\in\mathbb{R}^{15\times15}$ are coefficient
vectors and matrix, respectively, obtained from Oakley and O'Hagan's
paper \cite{oakley04}. The objective of this example is to evaluate
the accuracy of the proposed PDD approximation in calculating the
sensitivities of the first two moments, $m^{(1)}(\mathbf{d}):=\mathbb{E}_{\mathbf{d}}[y(\mathbf{X})]$
and $m^{(2)}(\mathbf{d}):=\mathbb{E}_{\mathbf{d}}[y^{2}(\mathbf{X})]$,
with respect to the mean $\mu$ and standard deviation $\sigma$ of
$X_{i}$ at $\mathbf{d}_{0}=\{0,1\}^{T}$.

Figures \ref{fig:1ex1}(a) through \ref{fig:1ex1}(d) present the
plots of the relative errors in the approximate sensitivities, $\partial\tilde{m}_{S,m}^{(1)}(\mathbf{d}_{0})/\partial\mu$,
$\partial\tilde{m}_{S,m}^{(1)}(\mathbf{d}_{0})/\partial\sigma$, $\partial\tilde{m}_{S,m}^{(2)}(\mathbf{d}_{0})/\partial\mu$,
and $\partial\tilde{m}_{S,m}^{(2)}(\mathbf{d}_{0})/\partial\sigma$,
obtained by the proposed univariate and bivariate PDD methods (Equations
\ref{mom1spdd3} and \ref{mom2spdd3}) for increasing orders of orthonormal
polynomials, that is, when the PDD truncation parameters $S=1$ and
2, $1\le m\le8$, and $m'=2$. The measure-consistent Hermite polynomials
and associated Gauss-Hermite quadrature rule were used. The relative
error is defined as the ratio of the absolute difference between the
exact and approximate sensitivities, divided by the exact sensitivity,
where the exact sensitivity can be easily calculated for the function
$y$ in Equation \ref{trigpoly}. Although $y$ is a bivariate function
of $\mathbf{X}$, the sensitivities of the first moment by the univariate
and bivariate PDD approximations are identical for any $m$. This
is because the expectations of $\tilde{y}_{1,m}(\mathbf{X})$ and
$\tilde{y}_{2,m}(\mathbf{X})$, when $\mathbf{X}$ comprises independent
variables, are the same function of $\mathbf{d}$. In this case, the
errors committed by both PDD approximations drop at the same rate,
as depicted in Figures \ref{fig:1ex1}(a) and \ref{fig:1ex1}(b),
resulting in rapid convergence of the sensitivities of the first moment.
However, the same condition does not hold true
for the sensitivities of the second moment, because the univariate
and bivariate PDD approximations yield distinct sets of results. Furthermore,
the errors in the sensitivities of the second moment by the univariate
PDD approximation do not decay strictly monotonically, leveling off
when $m$ crosses a threshold, as displayed in Figures \ref{fig:1ex1}(c)
and \ref{fig:1ex1}(d). In contrast, the errors in the sensitivities
of the second moment by the bivariate PDD approximation attenuate
continuously with respect to $m$, demonstrating rapid convergence
of the proposed solutions. The numerical results presented are consistent
with the mean-square convergence of the sensitivities described by
Proposition \ref{p3}.
\begin{figure}[h]
\begin{centering}
\includegraphics[clip, scale=0.6]{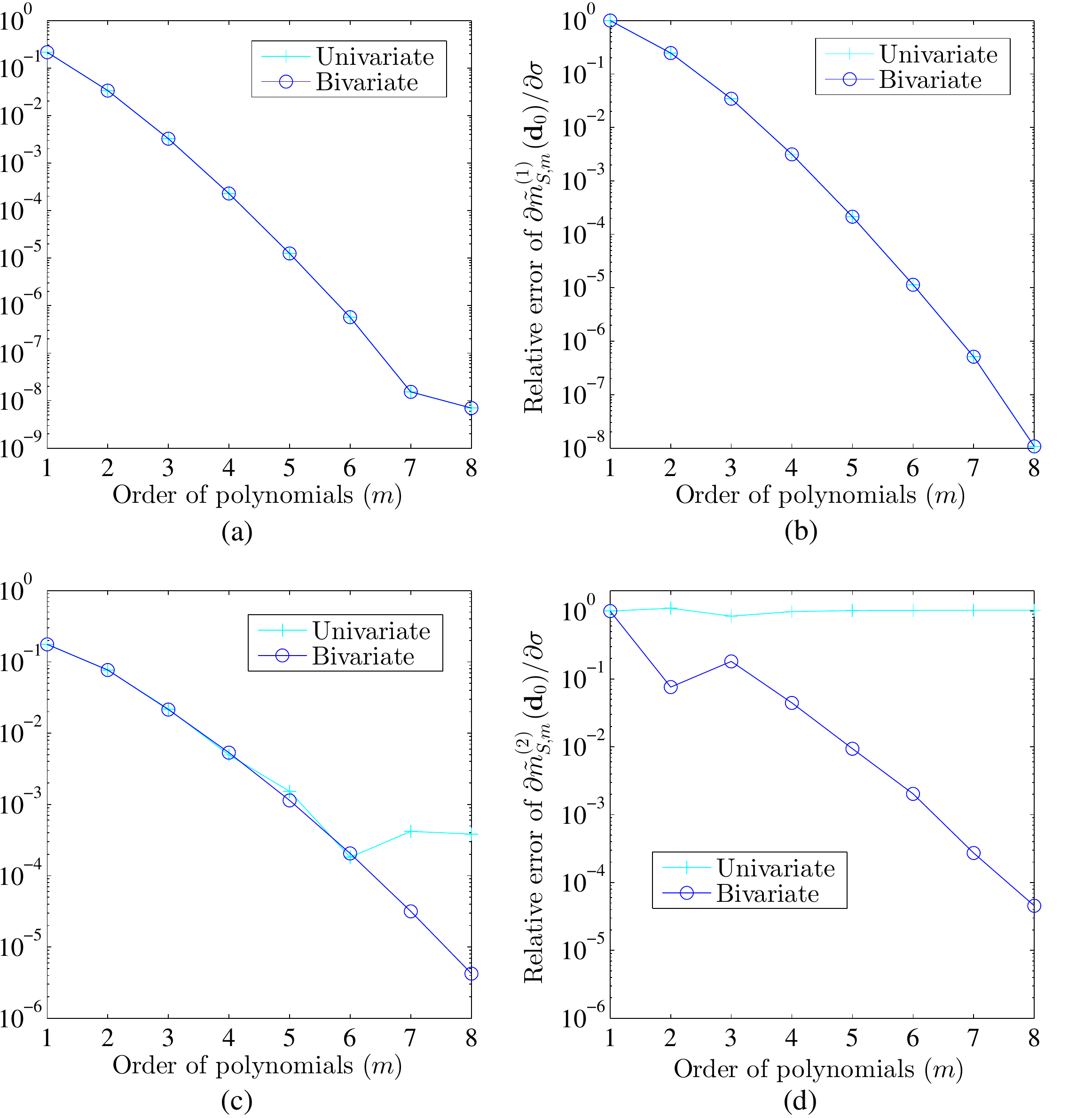}
\par\end{centering}
\caption{Relative errors in calculating the sensitivities of the first two
moments of $y$ due to various PDD truncations; (a) $\partial\tilde{m}_{S,m}^{(1)}(\mathbf{d}_{0})/\partial\mu$;
(b)$\partial\tilde{m}_{S,m}^{(1)}(\mathbf{d}_{0})/\partial\sigma$;
(c) $\partial\tilde{m}_{S,m}^{(2)}(\mathbf{d}_{0})/\partial\mu$;
(d) $\partial\tilde{m}_{S,m}^{(2)}(\mathbf{d}_{0})/\partial\sigma$ (Example 1)}
\label{fig:1ex1}
\end{figure}

\subsection{Example 2 : A Cubic Polynomial Function}

The second example is concerned with calculating the sensitivities
of the probability distribution of
\begin{equation}
y(\mathbf{X})=500-(X_{1}+X_{2})^{3}+X_{1}-X_{2}-X_{3}+X_{1}X_{2}X_{3}-X_{4},\label{cubic}
\end{equation}
where $X_{i}$, $i=1,2,3,4$, are four independent and identically
distributed random variables. The sensitivities were calculated by
the proposed PDD-MCS method using two approaches: (1) a direct approach
employing measure-consistent orthonormal polynomials as bases and
corresponding Gauss type quadrature rules for calculating the PDD
expansion coefficients, and (2) an indirect approach transforming
original random variables into Gaussian random variables, followed
by Hermite orthonormal polynomials as bases and the Gauss-Hermite
quadrature rule for calculating the expansion coefficients. Since
Equation \ref{cubic} represents a third-order polynomial, the measure-consistent
orthonormal polynomials with the largest order $m=3$ should exactly
reproduce $y$. In which case, the highest order of integrands for
calculating the PDD expansion coefficients is six; therefore, a four-point
($n=4$) measure-consistent Gauss quadrature should provide exact
values of the coefficients. In the direct approach, univariate ($S=1$),
bivariate ($S=2$), and trivariate ($S=3$) PDD approximations were
applied, where the expansion coefficients were calculated using $R=S$,
$m=3$, and $n=4$ in Equations \ref{pddcoeff1b} and \ref{pddcoeff2b}.
Therefore, the only source of error in a truncated PDD is the selection
of $S$. In the indirect approach, the transformation of $y$, if
the input variables follow non-Gaussian probability distributions,
leads to non-polynomials in the space of Gaussian variables; therefore,
approximation in a truncated PDD occur not only due to $S$, but also
due to $m$. Hence several values of $3\le m\le6$ were employed
for mappings into Gaussian variables. The coefficients in the indirect
approach were calculated by the $n$-point Gauss-Hermite quadrature
rule, where $n=m+1$.

A principal objective of this example is to gain insights on the choice
of orthonormal polynomials for solving this problem by PDD approximations.
Two distinct cases, depending on the probability distribution of input
variables, were studied.

\subsubsection{Case 1: Exponential Distributions}

For exponential distributions of input random variables, the PDF
\begin{equation}
f_{X_{i}}(x_{i};\lambda)=\left\{ \begin{array}{l@{\quad:\quad}l}
\lambda\exp(-\lambda x_{i})\; & x_{i}\ge0,\\
0 & x_{i}<0,
\end{array}\right.
\end{equation}
where $\lambda>0$ is the sole distribution parameter, $\mathbf{d}=\{\lambda\}\in\mathbb{R}$,
and $\mathbf{d}_{0}=\{1\}$.

Figure \ref{fig:2ex2-1}(a) presents the sensitivities of the probability
distribution of $y(\mathbf{X})$ with respect to $\lambda$ calculated
at $\mathbf{d}_{0}$ for different values of $\xi$ by the direct
approach. It contains four plots: one obtained from crude MCS/SF ($10^{6}$
samples) and the remaining three generated from univariate ($S=1$),
bivariate ($S=2$), and trivariate ($S=3$) PDD-MCS methods. For the
PDD-MCS methods, the measure-consistent Laguerre polynomials and associated
Gauss-Laguerre quadrature rule were used. The sensitivity of distributions,
all obtained for $m=3$, converge rapidly with respect to $S$. Compared
with crude MCS/SF, the univariate PDD-MCS method is less accurate
than others. This is due to the absence of cooperative effects of
random variables in the univariate approximation. The bivariate PDD-MCS
solution, which captures cooperative effects of any two variables,
is remarkably close to the crude Monte Carlo results. The results
from the trivariate decomposition and crude MCS/SF are coincident,
as $\tilde{y}_{3,3}(\mathbf{X})$ is identical to $y(\mathbf{X})$,
which itself is a trivariate function.

Using the indirect approach, Figures \ref{fig:2ex2-1}(b), \ref{fig:2ex2-1}(c),
and \ref{fig:2ex2-1}(d) depict the sensitivities of the distribution
of $y(\mathbf{X})$ by the univariate, bivariate, and trivariate PDD-MCS
methods for several values of $m$, calculated when the original variables
are transformed into standard Gaussian variables. The sensitivities
obtained by all three decomposition methods from the indirect approach
converge to the respective solutions from the direct approach when
$m$ and $n$ increase. However, the lowest order of Hermite polynomials
required to converge in the indirect approach is six, a number twice
that employed in the direct approach employing Laguerre polynomials.
This is due to higher nonlinearity of the mapped $y$ induced by the
transformation from exponential to Gaussian variables. Clearly, the
direct approach employing Laguerre polynomials and the Gauss-Laguerre
quadrature rule is the preferred choice for calculating sensitivities
of the probability distribution by the PDD-MCS method.
\begin{figure}[tbph]
\begin{centering}
\includegraphics[clip, scale=0.58]{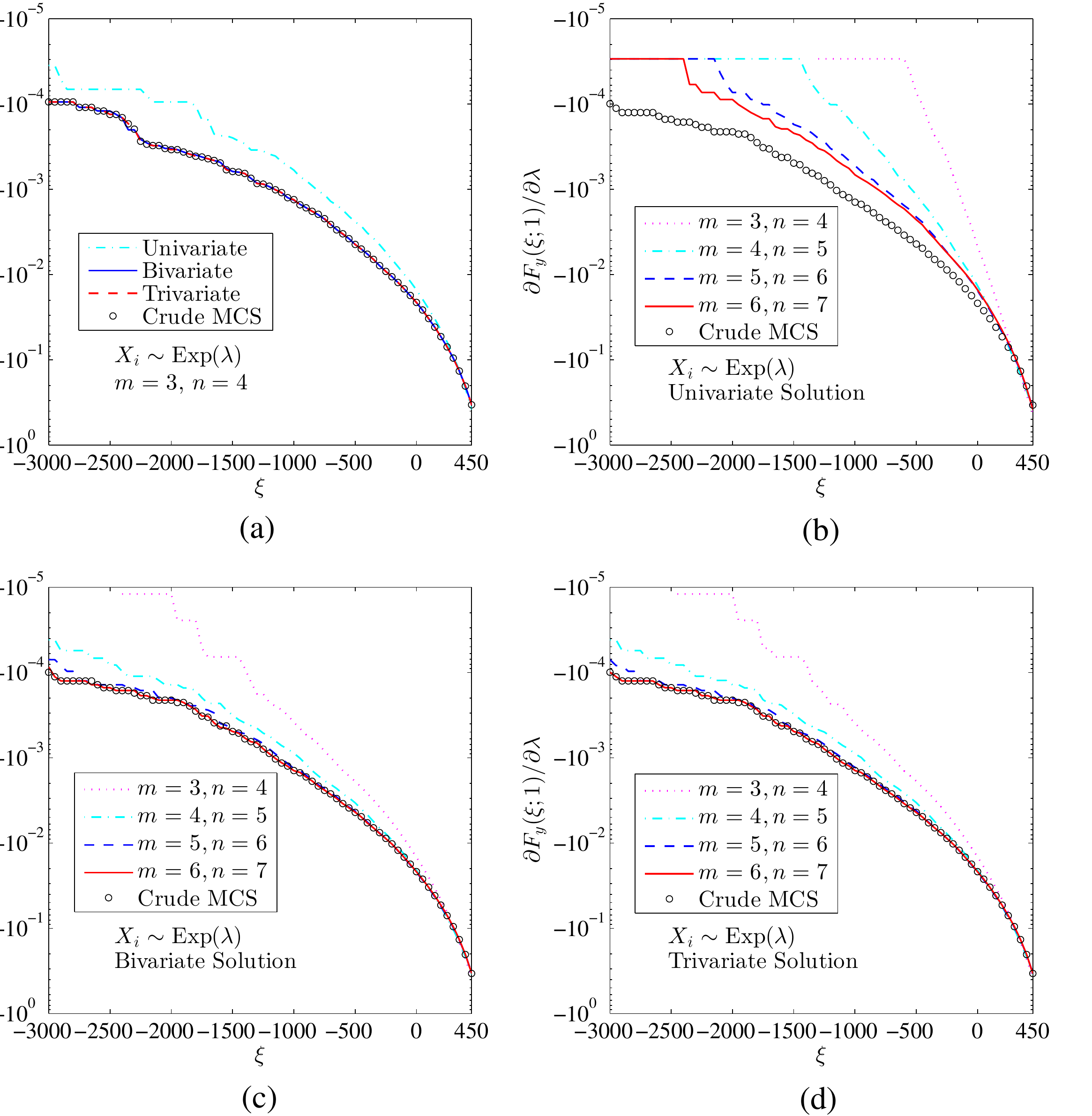}
\end{centering}
\caption{Sensitivities of the probability distribution of $y$ with respect
to $\lambda$ for exponential distributions of input
variables; (a) direct approach; (b) indirect approach-univariate;
(c) indirect approach-bivariate; (d) indirect approach-trivariate (Example 2)}
\label{fig:2ex2-1}
\end{figure}

\subsubsection{Case 2: Weibull Distributions}

For Weibull distributions of input random variables, the PDF
\begin{equation}
f_{X_{i}}(x_{i};\lambda,k)=\left\{ \begin{array}{l@{\quad:\quad}l}
{\displaystyle \frac{k}{\lambda}}\left({\displaystyle \frac{x_{i}}{\lambda}}\right)^{k-1}\exp\left[-\left({\displaystyle \frac{x_{i}}{\lambda}}\right)^{k}\right]\; & x_{i}\ge0,\\
0 & x_{i}<0,
\end{array}\right.
\end{equation}
where $\lambda>0$ and $k>0$ are scale and shape distribution parameters,
respectively, $\mathbf{d}=\{\lambda,k\}^{T}\in\mathbb{R}^{2}$, and
$\mathbf{d}_{0}=\{1,0.5\}^{T}$.

The sensitivities of the probability distribution of $y(\mathbf{X})$
with respect to $\lambda$ and $k$, calculated by the direct approach,
at $\mathbf{d}_{0}$ is exhibited in Figures \ref{fig:4ex2-2}(a)
and \ref{fig:3ex2-3}(a), respectively. Again, four plots, comprising
the solutions from crude MCS/SF ($10^{6}$ samples) and three PDD-MCS
methods using the direct approach, are illustrated. Since classical
orthonormal polynomials do not exist for Weibull probability measures,
the Stieltjes procedure was employed to numerically
determine the measure-consistent orthonormal polynomials and corresponding
Gauss quadrature formula \cite{rahman09b}. Similar to Case 1, both
sensitivities of the distribution by the PDD-MCS method in Figures \ref{fig:4ex2-2}(a)
and \ref{fig:3ex2-3}(a), all obtained for $m=3$, converge rapidly
to crude MCS solutions with respect to $S$. However, the sensitivities
of the distribution by all three PDD-MCS approximations, when calculated
using the indirect approach and shown in Figures \ref{fig:4ex2-2}(b)
through \ref{fig:4ex2-2}(d) and Figures \ref{fig:3ex2-3}(b) through
\ref{fig:3ex2-3}(d), fail to get closer even when the order of Hermite
polynomials is twice that employed in the direct approach. The lack
of convergence is attributed to a significantly higher nonlinearity
of the transformation from Weibull to Gaussian variables than that
from exponential to Gaussian variables. Therefore, a direct approach
entailing measure-consistent orthogonal polynomials and associated
Gauss quadrature rule, even in the absence of classical polynomials,
is desirable for generating both accurate and efficient solutions
by the PDD-MCS method.
\begin{figure}[tbph]
\begin{centering}
\includegraphics[clip, scale=0.58]{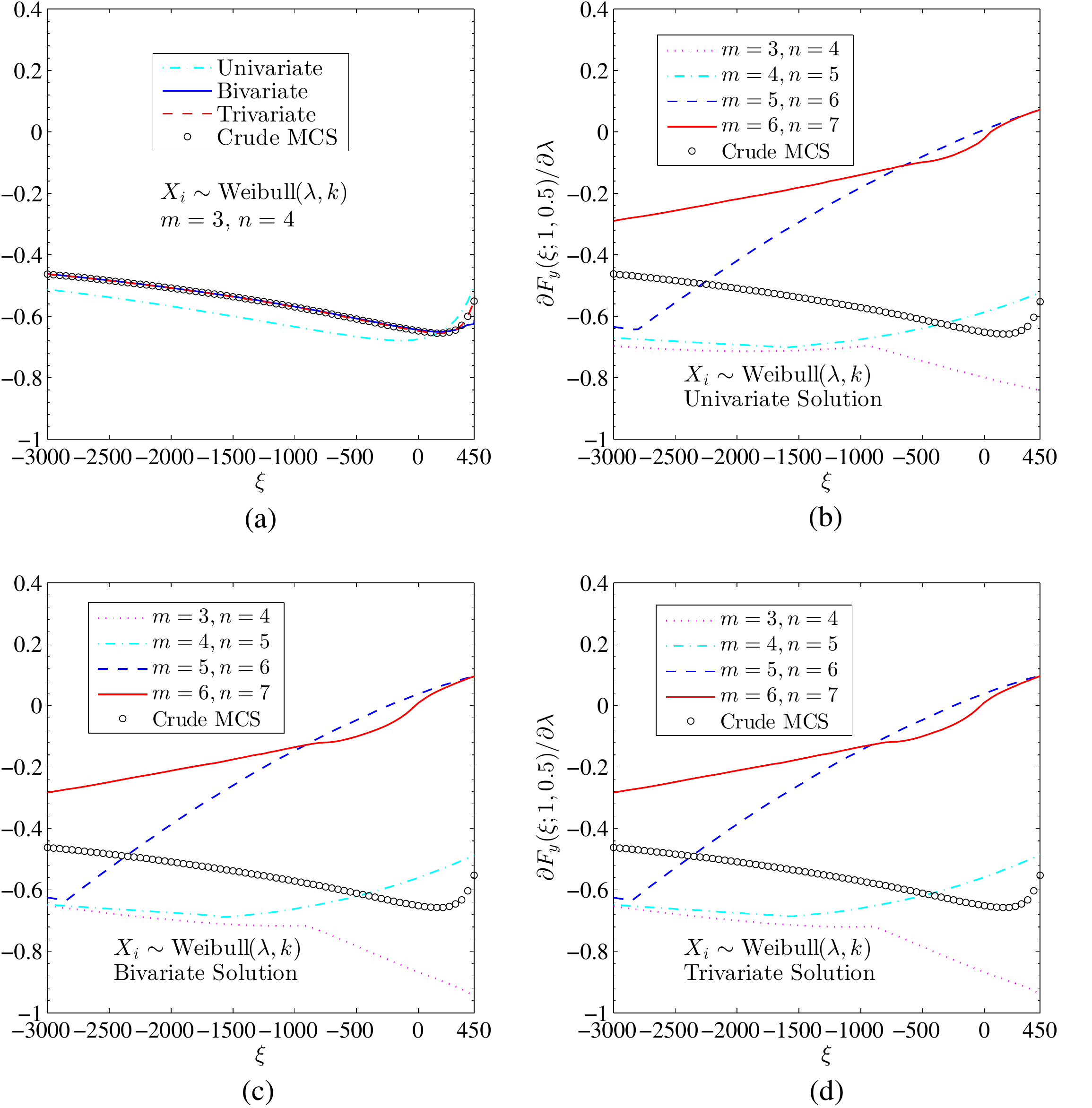}
\par\end{centering}
\caption{Sensitivities of the probability distribution of $y$ with respect
to $\lambda$ for Weibull distributions of input variables;
(a) direct approach; (b) indirect approach-univariate; (c) indirect
approach-bivariate; (d) indirect approach-trivariate (Example 2)}
\label{fig:4ex2-2}
\end{figure}
\begin{figure}[tbph]
\begin{centering}
\includegraphics[clip, scale=0.58]{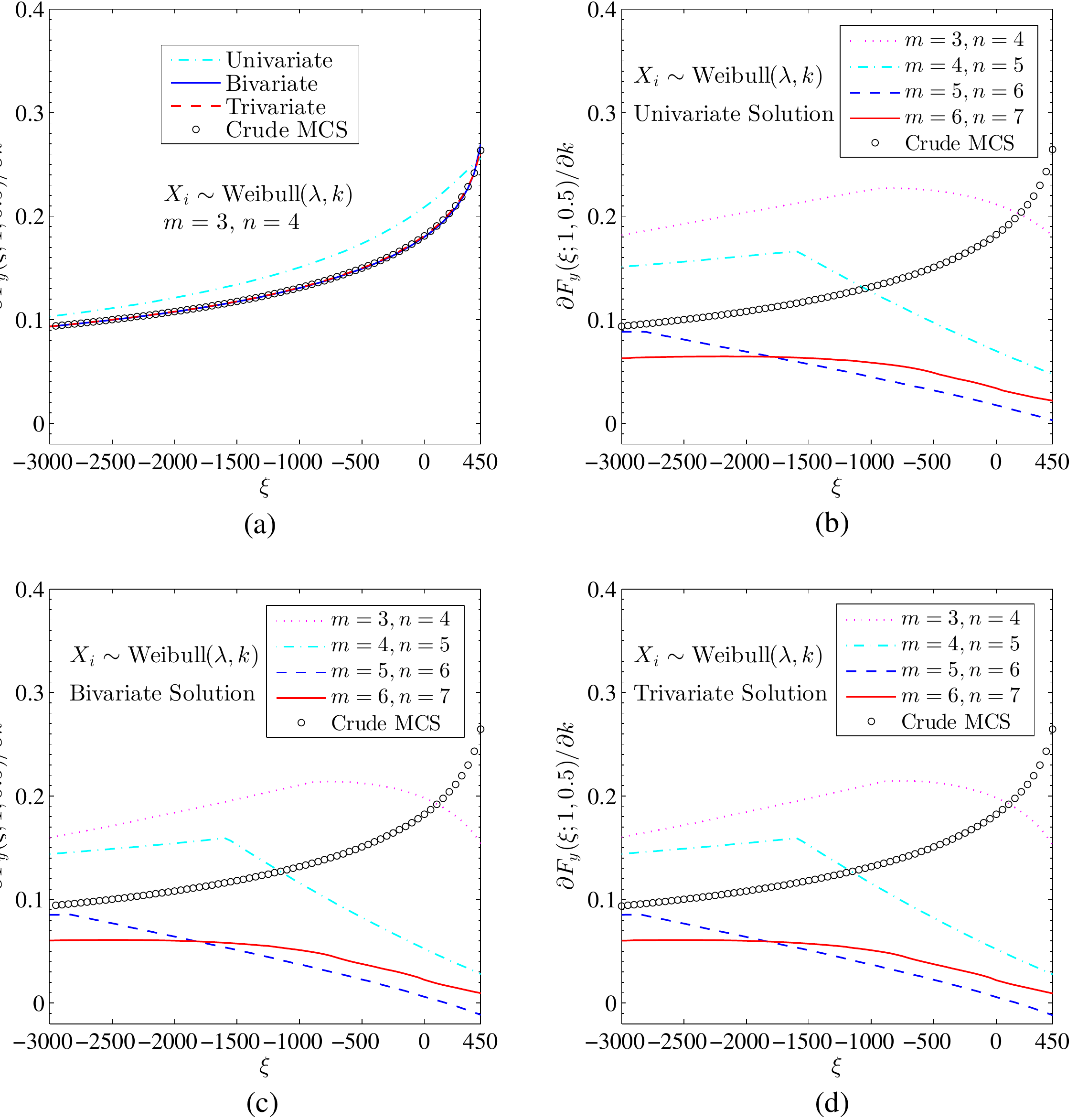}
\par\end{centering}

\caption{Sensitivities of the probability distribution of $y$ with respect
to $k$ for Weibull distributions of input variables;
(a) direct approach; (b) indirect approach-univariate; (c) indirect
approach-bivariate; (d) indirect approach-trivariate (Example 2)}
\label{fig:3ex2-3}
\end{figure}

\subsection{Example 3: A Function of Gaussian Variables}

Consider a component reliability problem with the performance function
\begin{equation}
y(\mathbf{X})=\frac{1}{1000+\sum\limits _{i=1}^{N}X_{i}}-\frac{1}{1000+3\sqrt{N}},\label{fgauss}
\end{equation}
where $\mathbf{X}\sim N(\boldsymbol{\mu},\boldsymbol{\Sigma})$ is
an $N$-dimensional Gaussian random vector with mean vector $\boldsymbol{\mu}=\{\mu,\cdots,\mu\}^{T}$
and covariance matrix $\boldsymbol{\Sigma}=\sigma^{2}\mbox{diag}[1,\cdots,1]=:\sigma^{2}\mathbf{I}$,
and $\mathbf{d}=\{\mu,\sigma\}^{T}$. The objective of this example
is to evaluate the accuracy of the proposed PDD-SPA and PDD-MCS methods
in calculating the failure probability $P_{F}(\mathbf{d}):=P_{\mathbf{d}}[y(\mathbf{X})<0]$
and its sensitivities $\partial P_{F}(\mathbf{d}_{0})\left/\partial\mu\right.$
and $\partial P_{F}(\mathbf{d}_{0})\left/\partial\sigma\right.$ at
$\mathbf{d}_{0}=\{0,1\}^{T}$ for two problem sizes or dimensions:
$N=10$ and $N=100$. The exact solutions for a general $N$-dimensional
problem are
\begin{equation}
P_{F}(\mathbf{d})=\Phi(-\beta),\;\frac{\partial P_{F}(\mathbf{d})}{\partial\mu}=\frac{\phi(-\beta)\sqrt{N}}{\sigma},\;\frac{\partial P_{F}(\mathbf{d})}{\partial\sigma}=\frac{\phi(-\beta)(3-\mu\sqrt{N})}{\sigma^{2}},
\end{equation}
where $\beta=(3-\mu\sqrt{N})\left/\sigma\right.$, provided that $0<\sigma^{2}<\infty$.

Since $y$ in Equation \ref{fgauss} is a non-polynomial function,
the univariate ($S=1$) or bivariate ($S=2$) truncation of PDD for
a finite value of $m$, regardless how large, provides only an approximation.
Nonetheless, using only $m=3$ and $n=4$, the univariate and bivariate
estimates of the failure probability and its two sensitivities by
the PDD-SPA and PDD-MCS methods for $N=10$ are listed in Table \ref{tab:3-N10}.
The measure-consistent Hermite polynomials and associated Gauss-Hermite
quadrature rule were used in both methods. The results of the PDD-SPA
method are further broken down according to Options I (Equation \ref{momrsen3})
and II (Equation \ref{momrsen4}) for calculating all moments of order
up to four to approximate the CGF of $y(\mathbf{X})$, as explained in Algorithm \ref{alg:PDD-SPA}. Option I requires
at most eight-dimensional integrations in the bivariate PDD-SPA method
for calculating the moments of $y(\mathbf{X})$, whereas Option II
entails at most two-dimensional integrations for the values of $\bar{S}=2$
and $\bar{m}=6$ selected. However, the differences between the two
respective estimates of the failure probability and its sensitivities
by these options, in conjunction with either the univariate or the bivariate
PDD approximation, are negligibly small. Therefore, Option II is not
only accurate, but also facilitates efficient solutions by the PDD-SPA
method, at least in this example. Compared with the results of crude
MCS/SF ($10^{6}$ samples) or the exact solution, also listed in Table
\ref{tab:3-N10}, both univariate and bivariate versions of the PDD-SPA
method, regardless of the option, are satisfactory. The same trend
holds for the univariate and bivariate PDD-MCS methods. No meaningful
difference is found between the respective accuracies of the PDD-SPA
and PDD-MCS solutions for a given truncation $S$. Indeed, the agreement
between the bivariate solutions from the PDD-SPA or PDD-MCS method
and the benchmark results is excellent.

\begin{table}[tbph]
\caption{Component failure probability and sensitivities at $\mathbf{d}_{0}=\{0,1\}^{T}$ for $N=10$ (Example 3)}
\label{tab:3-N10}
\begin{centering}
\begin{tabular}{>{\centering}p{0.64in}>{\centering}p{0.48in}>{\centering}p{0.48in}>{\centering}p{0.48in}>{\centering}p{0.48in}>{\centering}p{0.53in}>{\centering}p{0.53in}>{\centering}p{0.32in}>{\centering}p{0.25in}}
\hline
 &{\footnotesize PDD-SPA}{\footnotesize \par}{\footnotesize (Univariate, Option I)}&{\footnotesize PDD-SPA}{\footnotesize \par}{\footnotesize (Univariate, Option II)}&{\footnotesize PDD-SPA}{\footnotesize \par}{\footnotesize (Bivariate, Option I)}&{\footnotesize PDD-SPA}{\footnotesize \par}{\footnotesize (Bivariate, Option II)}&{\footnotesize PDD-MCS}{\footnotesize \par}{\footnotesize (Univariate)}&{\footnotesize PDD-MCS}{\footnotesize \par}{\footnotesize (Bivariate)}&{\footnotesize Crude MCS/SF}&{\footnotesize Exact}\tabularnewline
\hline
{\scriptsize$P_{F}(\mathbf{d}_{0})$\par}{\scriptsize$(\times10^{-3})$}&{\scriptsize$1.349$}&{\scriptsize$1.453$}&{\scriptsize$1.349$}&{\scriptsize$1.347$}&{\scriptsize$1.510$}&{\scriptsize$1.397$}&{\scriptsize$1.397$}&{\scriptsize$1.350$}\tabularnewline
{\scriptsize$\partial{P_{F}(\mathbf{d}_{0})}/\partial{\mu}$\par}{\scriptsize$(\times10^{-2})$}&{\scriptsize$1.401$}&{\scriptsize$1.529$}&{\scriptsize$1.401$}&{\scriptsize$1.550$}&{\scriptsize$1.553$}&{\scriptsize$1.447$}&{\scriptsize$1.447$}&{\scriptsize$1.401$}\tabularnewline
{\scriptsize$\partial{P_{F}(\mathbf{d}_{0})}/\partial{\sigma}$\par}{\scriptsize$(\times10^{-2})$}&{\scriptsize$1.330$}&{\scriptsize$1.409$}&{\scriptsize$1.330$}&{\scriptsize$1.326$}&{\scriptsize$1.472$}&{\scriptsize$1.371$}&{\scriptsize$1.371$}&{\scriptsize$1.330$}\tabularnewline
{\scriptsize No. of function eval.}&{\scriptsize 41}&{\scriptsize 41}&{\scriptsize 761}&{\scriptsize 761}&{\scriptsize 41}&{\scriptsize 761}&{\scriptsize$10^{6}$}&{\scriptsize -}\tabularnewline
\hline
\end{tabular}
\par\end{centering}
\end{table}

For high-dimensional problems, such as $N=100$, Table \ref{tab:4-N100}
summarizes the estimates of the failure probability and its sensitivities
by the PDD-SPA and PDD-MCS methods using $m=3$. Due to the higher
dimension, the PDD-SPA method with Option I requires numerous eight-dimensional
integrations for calculating moments of $y(\mathbf{X})$ and is no
longer practical. Therefore, the PDD-SPA method with Option II requiring
only two-dimensional $(\bar{S}=2,\ \bar{m}=6)$ integrations was used
for $N=100$. Again both univariate and bivariate approximations were
invoked for the PDD-SPA and PDD-MCS methods. Compared with the benchmark
results of crude MCS/SF ($10^{6}$ samples) or the exact solution,
listed in Table \ref{tab:4-N100}, the bivariate PDD-SPA method or
the bivariate PDD-MCS method provides highly accurate solutions for
this high-dimensional reliability problem.

\begin{table}[tbph]
\caption{Component failure probability and sensitivities at $\mathbf{d}_{0}=\{0,1\}^{T}$ for $N=100$ (Example 3)}
\label{tab:4-N100}
\begin{centering}
\begin{tabular}{>{\centering}p{0.64in}>{\centering}p{0.5in}>{\centering}p{0.5in}>{\centering}p{0.53in}>{\centering}p{0.53in}>{\centering}p{0.5in}>{\centering}p{0.5in}}
\hline
 & {\footnotesize PDD-SPA }{\footnotesize \par}

{\footnotesize (Univariate, Option II)}  & {\footnotesize PDD-SPA }{\footnotesize \par}

{\footnotesize (Bivariate, Option II)}  & {\footnotesize PDD-MCS }{\footnotesize \par}

{\footnotesize (Univariate)}  & {\footnotesize PDD-MCS }{\footnotesize \par}

{\footnotesize (Bivariate)}  & {\footnotesize Crude MCS/SF}  & {\footnotesize Exact} \tabularnewline
\hline
{\scriptsize$P_{F}(\mathbf{d}_{0})$\par}{\scriptsize$(\times10^{-3})$}&{\scriptsize$1.731$}&{\scriptsize$1.320$}&{\scriptsize $1.724$}& {\scriptsize$1.344$}&{\scriptsize$1.352$}&{\scriptsize$1.350$}\tabularnewline
{\scriptsize$\partial{P_{F}(\mathbf{d}_{0})}/\partial{\mu}$\par}{\scriptsize$(\times10^{-2})$}&{\scriptsize$5.994$}&{\scriptsize$6.412$}&{\scriptsize$5.538$}&{\scriptsize$4.413$}&{\scriptsize$4.437$}&{\scriptsize$4.432$}\tabularnewline
{\scriptsize$\partial{P_{F}(\mathbf{d}_{0})}/\partial{\sigma}$\par}{\scriptsize$(\times10^{-2})$}&{\scriptsize$1.612$}&{\scriptsize $1.277$}&{\scriptsize$1.556$}&{\scriptsize$1.291$}&{\scriptsize $1.302$}&{\scriptsize$1.330$}\tabularnewline
{\scriptsize No. of function eval.}&{\scriptsize 401}&{\scriptsize 79,601} &{\scriptsize 401} &{\scriptsize 79,601} &{\scriptsize$10^{6}$} &{\scriptsize -}\tabularnewline
\hline
\end{tabular}
\par\end{centering}
\end{table}

Tables \ref{tab:3-N10} and \ref{tab:4-N100} also specify the relative
computational efforts of the PDD-SPA and PDD-MCS methods, measured
in terms of numbers of original function evaluations, when $N=10$
and $N=100$. Given the truncation parameter $S$, the PDD-SPA and
PDD-MCS methods require identical numbers of function evaluations,
meaning that their computational costs are practically the same. Although
the bivariate approximation is significantly more expensive than the
univariate approximation, the former generates highly accurate solutions,
as expected. However, both versions of the PDD-SPA or PDD-MCS method
are markedly more economical than the crude MCS/SF method for solving
this high-dimensional reliability problem.

\subsection{Example 4 : A Function of Non-Gaussian Variables}

Consider the univariate function \cite{huang12}
\begin{equation}
y(\mathbf{X})=X_{1}+2X_{2}+2X_{3}+X_{4}-5X_{5}-5X_{6}
\end{equation}
of six statistically independent and lognormally distributed random
variables $X_{i}$ with means $\mu_{i}$ and standard deviations $c\mu_{i}$,
$i=1,\cdots,6$, where $c>0$ is a constant, representing the coefficient
of variation of $X_{i}$. The design vector $\mathbf{d}=\{\mu_{1},\cdots\mu_{6},\sigma_{1},\cdots,\sigma_{6}\}^{T}$.
The objective of this example is to evaluate the accuracy of the proposed
PDD-SPA method in estimating the failure probability $P_{F}(\mathbf{d}):=P_{\mathbf{d}}[y(\mathbf{X})<0]$
and its sensitivities $\partial P_{F}(\mathbf{d})\left/\partial\mu_{i}\right.$
and $\partial P_{F}(\mathbf{d})\left/\partial\sigma_{i}\right.$,
$i=1,\cdots,6$, at $\mathbf{d}=\mathbf{d}_{0}=\{120,120,120,120,50,40,120c,120c,120c,120c,50c,40c\}^{T}$
for $0.1\le c\le0.7$.

The function $y$, being both univariate and linear, is exactly reproduced
by the univariate ($S=1$), first-order ($m=1$) PDD approximation
when orthonormal polynomials consistent with lognormal probability
measures are used. Therefore, the univariate, first-order PDD approximation,
along with Option I (Equation \ref{momrsen3}), was employed in the
PDD-SPA method to approximate $P_{F}(\mathbf{d}_{0})$, $\partial P_{F}(\mathbf{d}_{0})\left/\partial\mu_{i}\right.$, and $\partial P_{F}(\mathbf{d}_{0})\left/\partial\sigma_{i}\right.$. All moments of order up to four were estimated according to Algorithm \ref{alg:PDD-SPA}. The measure-consistent solutions by the PDD-SPA method and crude MCS/SF
are presented in Figures \ref{fig:5ex4}(a), \ref{fig:5ex4}(b), and
\ref{fig:5ex4}(c). Huang and Zhang \cite{huang12}, who solved the
same problem, reported similar results, but at the expense of higher-order
integrations stemming from transformation to Gaussian variables. No
such transformation was required or performed in this work. According
to Figure \ref{fig:5ex4}(a), the failure probability curve generated
by the PDD-SPA method closely traces the path of crude MCS/SF ($10^{6}$
samples) for low coefficients of variation, although a slight deviation
begins to appear when $c$ exceeds about 0.4. The loss of accuracy
becomes more pronounced when comparing the sensitivities of the failure
probability with respect to means and standard deviations in Figures
\ref{fig:5ex4}(b) and \ref{fig:5ex4}(c). Indeed, for large coefficients
of variation, that is, for $c>0.4$, some of the sensitivities are
no longer accurately calculated by the PDD-SPA method. This is because
the fourth-order ($Q=4)$ approximation of the CGF of $y(\mathbf{X})$,
used for constructing the PDD-SPA method, is inadequate. Indeed, Table
\ref{tab:5-cgfError} reveals that the relative errors in the fourth-order
Taylor approximation of the CGF, obtained by MCS ($10^{8}$ samples)
and evaluated at respective saddlepoints, rises with increasing values
of the coefficient of variation from $0.2$ to $0.7$. Therefore,
a truncation larger than four is warranted for higher-order approximations
of CGF, but doing so engenders an added difficulty in finding a unique
saddlepoint. The topic merits further study.

\begin{table}[tbph]
\centering
\caption{Relative errors in calculating CGF (Example 4)}
\label{tab:5-cgfError}
\begin{threeparttable}
\begin{centering}
\renewcommand*\arraystretch{1.2}
\begin{tabular}{ccc}
\hline
$c$ & $t_{s}$  & Relative error $^{\tnote{(a)}}$ \tabularnewline
\hline
$0.1$  & $-1.0029\times10^{-1}$  & $0.0248$ \tabularnewline
$0.2$  & $-2.5008\times10^{-2}$  & $0.0068$ \tabularnewline
$0.3$  & $-1.1066\times10^{-2}$  & $0.0125$ \tabularnewline
$0.4$  & $-6.1850\times10^{-3}$  & $0.0183$ \tabularnewline
$0.5$  & $-3.9250\times10^{-3}$  & $0.0329$ \tabularnewline
$0.6$  & $-2.6966\times10^{-3}$  & $0.0447$ \tabularnewline
$0.7$  & $-1.9551\times10^{-3}$  & $0.2781$ \tabularnewline
\hline
\end{tabular}
\renewcommand*\arraystretch{1.0}
\begin{tablenotes}
\footnotesize
\item [(a)] The sample size of MCS is $10^{8}$.
\end{tablenotes}
\par\end{centering}
\end{threeparttable}
\end{table}

It is important to note that the univariate, first-order PDD-MCS method,
employing measure-consistent orthonormal polynomials, should render
the same solution of crude MCS/SF. This is the primary reason why
the PDD-MCS results are not depicted in Figures \ref{fig:5ex4}(a)
through \ref{fig:5ex4}(c). Nonetheless, the PDD-MCS method should
be more accurate than the PDD-SPA method in solving this problem,
especially at larger coefficients of variation.
\begin{figure}[tbph]
\begin{centering}
\includegraphics[clip,scale=0.62]{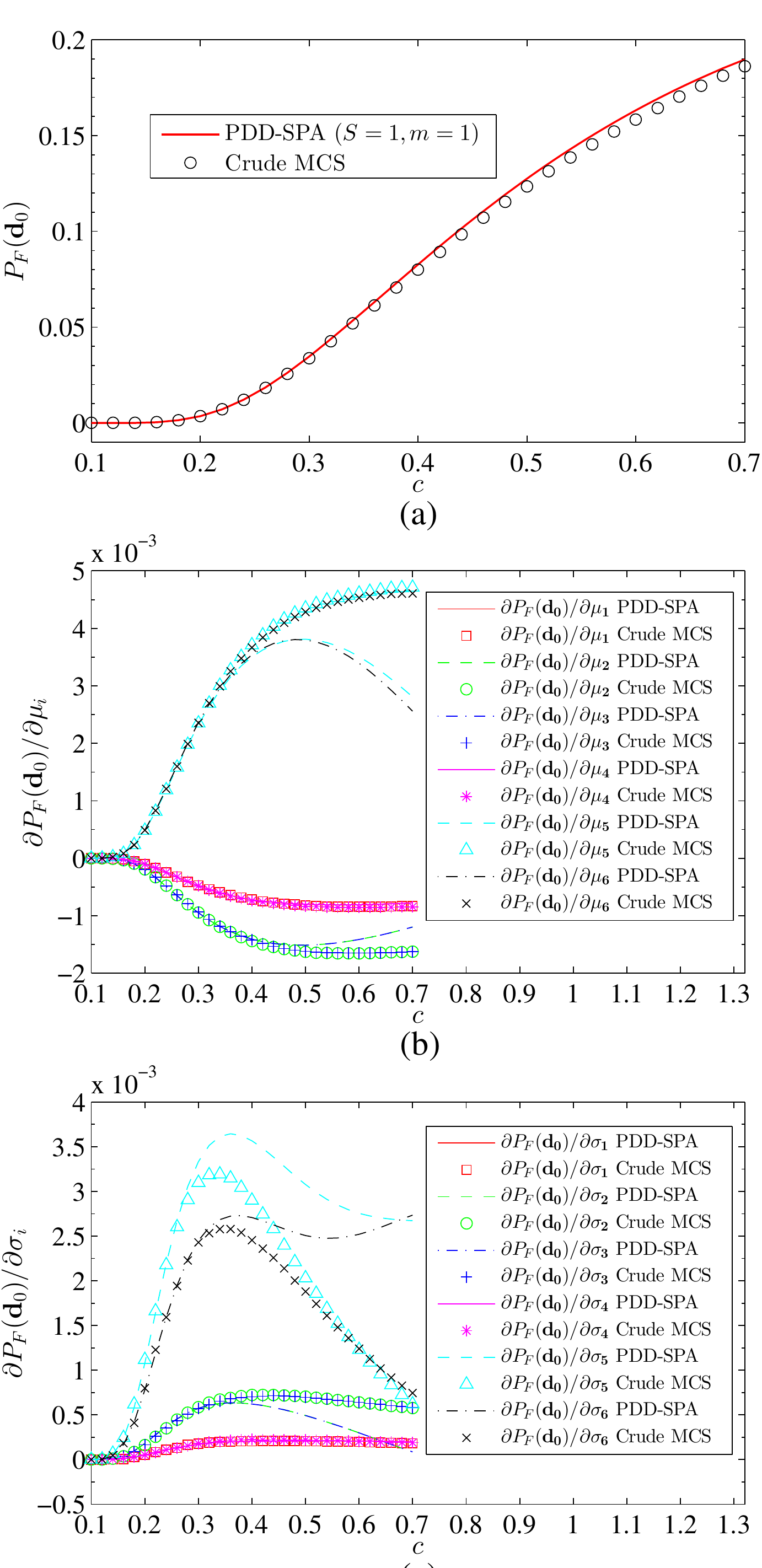}
\par\end{centering}
\caption{Results of the reliability and sensitivity analyses by
the PDD-SPA method and crude MCS/SF; (a) failure probability; (b)
sensitivities of failure probability with respect to means; (c) sensitivities
of failure probability with respect to standard deviations (Example 4)}
\label{fig:5ex4}
\end{figure}

\subsection{Example 5: A Six-Bay, Twenty-One-Bar Truss}

This example demonstrates how system reliability and its sensitivities
can be efficiently estimated with the PDD-MCS method. A linear-elastic,
six-bay, twenty-one-bar truss structure, with geometric properties
shown in Figure \ref{fig:6ex5-21bar}, is simply supported at nodes
1 and 12, and is subjected to four concentrated loads of 10,000 lb
(44,482 N) at nodes 3, 5, 9, and 11 and a concentrated load of 16,000
lb (71,172 N) at node 7. The truss material is made of an aluminum
alloy with the Young's modulus $E=10^{7}$ psi (68.94 GPa). The random
input is $\mathbf{X}=\{X_{1},\cdots,X_{21}\}^{T}\in\mathbb{R}^{21}$,
where $X_{i}$ is the cross-sectional areas of the $i$th truss member.
The random variables are independent and lognormally distributed with
means $\mu_{i}$, $i=1,\cdots,21$, each of which has a ten percent
coefficient of variation. From linear-elastic finite-element analysis
(FEA), the maximum vertical displacement $v_{\max}(\mathbf{X})$ and
maximum axial stress $\sigma_{\max}(\mathbf{X})$ occur at node $7$
and member 3 or 4, respectively, where the permissible displacement
and stress are limited to $d_{\text{allow}}=0.266$ in (6.76 mm) and
$\sigma_{\text{allow}}=37,680$ psi (259.8 MPa), respectively. The
system-level failure set is defined as $\Omega_{F}:=\{\mathbf{x}:\{y_{1}(\mathbf{x})<0\}\cup\{y_{2}(\mathbf{x})<0\}\}$,
where the performance functions
\begin{equation}
y_{1}(\mathbf{X})=1-\frac{|v_{\max}(\mathbf{X})|}{d_{\text{allow}}},\ y_{2}(\mathbf{X})=1-\frac{|\sigma_{\max}(\mathbf{X})|}{\sigma_{\text{allow}}}.
\end{equation}
The design vector is $\mathbf{d}=\{\mu_{1},\cdots,\mu_{21}\}^{T}$. The
objective of this example is to evaluate the accuracy of the proposed
PDD-MCS method in estimating the system failure probability $P_{F}(\mathbf{d}):=P_{\mathbf{d}}\left[\{y_{1}(\mathbf{X})<0\}\cup\{y_{2}(\mathbf{X})<0\}\right]$
and its sensitivities $\partial{P_{F}(\mathbf{d})}/\partial{\mu_{i}},\ i=1,\dots,21$
at $\mathbf{d}=\mathbf{d}_{0}=\{2,2,2,2,2,2,10,10,10,10,10,10,3,3,3,3,3,1,1,1,1\}^{T}\ \mbox{in}^{2}$
($\times2.54^{2}\;\mathrm{cm^{2}}$).
\begin{figure}[tbph]
\begin{centering}
\includegraphics[clip,scale=0.75]{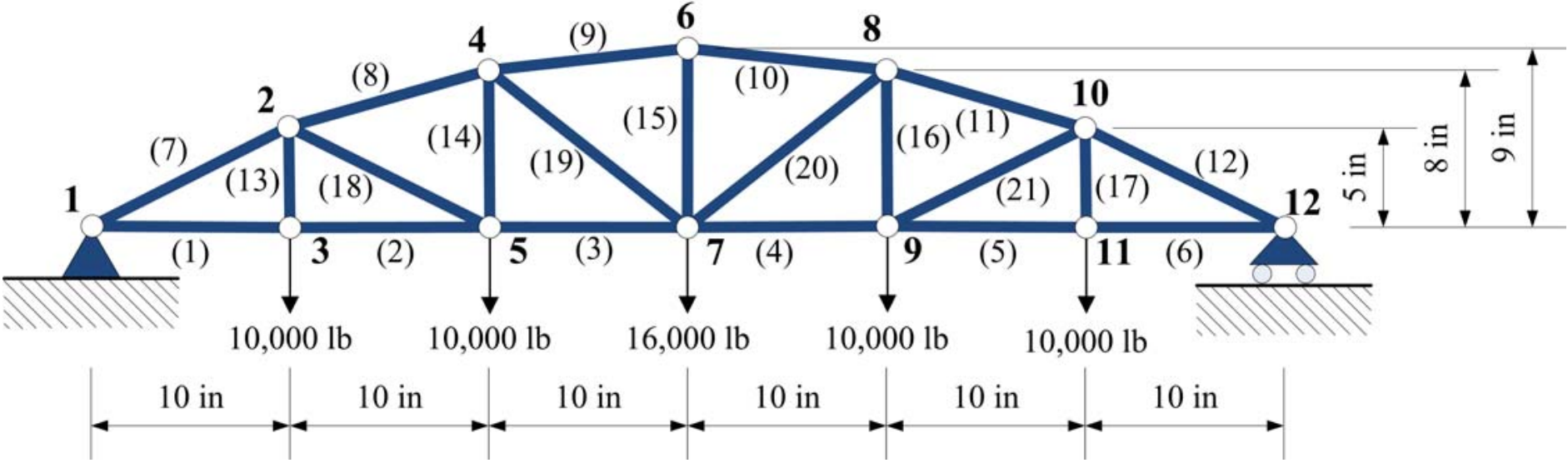}
\par\end{centering}
\caption{A six-bay, twenty-one-bar truss structure (Example 5)}
\label{fig:6ex5-21bar}
\end{figure}

Table \ref{tab:6-21bar} presents the system failure probability and
its 21 sensitivities obtained using the bivariate ($S=2)$, third-order
($m=3$) PDD approximations of $y_{1}(\mathbf{X})$ and $y_{2}(\mathbf{X})$
and two versions of crude MCS: crude MCS/SF and crude MCS/FD, providing
benchmark solutions. The crude MCS/FD method does not depend on score
functions and, therefore, facilitates an independent verification
of the PDD-MCS method. The respective sensitivities obtained by the
PDD-MCS method and crude MCS/SF are practically the same. However,
crude MCS/FD typically gives biased sensitivity estimates, where slight
fluctuations in the results are expected due to a finite variance
of the estimator. For two instances, such as when the sensitivities
are too small, crude MCS/FD produces trivial solutions and hence cannot
be used as reference solutions. Nonetheless, the general quality of
agreement between the results of the PDD-MCS method and crude MCS/FD
is very good. Comparing the computational efforts, only 3445 FEA were
required to produce the results of the PDD-MCS method in Table \ref{tab:6-21bar},
whereas $10^{7}$ and $22\times10^{7}$ FEA (samples) were incurred
by crude MCS/SF and crude MCS/FD, respectively. The 22-fold increase
in the number of FEA in crude MCS/FD is due to forward finite-difference
calculations entailing all 21 sensitivities. Therefore, the PDD-MCS
method provides not only highly accurate, but also vastly efficient,
solutions of system reliability problems.

\begin{table}[tbph]
\begin{centering}
\caption{System failure probability and sensitivities for the six-bay, twenty-one-bar
truss (Example 5)}
\label{tab:6-21bar}
\renewcommand*\arraystretch{1.1}
\begin{tabular}{>{\centering}p{3cm}>{\centering}p{3cm}>{\centering}p{3cm}>{\centering}p{3cm}}
\hline
 & PDD-MCS  & Crude MCS/SF  & Crude MCS/FD \tabularnewline
\hline
$P_{F}(\mathbf{d}_{0})$  & $\ \ \ 8.1782\times10^{-3}$  & $\ \ \ 8.3890\times10^{-3}$  & $\ \ \ 8.3890\times10^{-3}$ \tabularnewline
$\partial{P_{F}}(\mathbf{d}_{0})/\partial{\mu_{1}}$  & $-2.6390\times10^{-2}$  & $-2.6546\times10^{-2}$  & $-2.3895\times10^{-2}$ \tabularnewline
$\partial{P_{F}}(\mathbf{d}_{0})/\partial{\mu_{2}}$  & $-2.6385\times10^{-2}$  & $-2.6505\times10^{-2}$  & $-2.3810\times10^{-2}$ \tabularnewline
$\partial{P_{F}}(\mathbf{d}_{0})/\partial{\mu_{3}}$  & $-1.0010\times10^{-1}$  & $-1.0320\times10^{-1}$  & $-8.8875\times10^{-2}$ \tabularnewline
$\partial{P_{F}}(\mathbf{d}_{0})/\partial{\mu_{4}}$  & $-3.5684\times10^{-2}$  & $-3.5972\times10^{-2}$  & $-3.1960\times10^{-2}$ \tabularnewline
$\partial{P_{F}}(\mathbf{d}_{0})/\partial{\mu_{5}}$  & $-2.6356\times10^{-2}$  & $-2.6469\times10^{-2}$  & $-2.3825\times10^{-2}$ \tabularnewline
$\partial{P_{F}}(\mathbf{d}_{0})/\partial{\mu_{6}}$  & $-2.6266\times10^{-2}$  & $-2.6364\times10^{-2}$  & $-2.3950\times10^{-2}$ \tabularnewline
$\partial{P_{F}}(\mathbf{d}_{0})/\partial{\mu_{7}}$  & $-1.3189\times10^{-3}$  & $-1.3213\times10^{-3}$  & $-1.1970\times10^{-3}$ \tabularnewline
$\partial{P_{F}}(\mathbf{d}_{0})/\partial{\mu_{8}}$  & $-1.3294\times10^{-3}$  & $-1.3244\times10^{-3}$  & $-1.2820\times10^{-3}$ \tabularnewline
$\partial{P_{F}}(\mathbf{d}_{0})/\partial{\mu_{9}}$  & $-1.6665\times10^{-3}$  & $-1.6514\times10^{-3}$  & $-1.5610\times10^{-3}$ \tabularnewline
$\partial{P_{F}}(\mathbf{d}_{0})/\partial{\mu_{10}}$  & $-1.7554\times10^{-3}$  & $-1.7576\times10^{-3}$  & $-1.5670\times10^{-3}$ \tabularnewline
$\partial{P_{F}}(\mathbf{d}_{0})/\partial{\mu_{11}}$  & $-1.3892\times10^{-3}$  & $-1.3945\times10^{-3}$  & $-1.2530\times10^{-3}$ \tabularnewline
$\partial{P_{F}}(\mathbf{d}_{0})/\partial{\mu_{12}}$  & $-1.3136\times10^{-3}$  & $-1.3140\times10^{-3}$  & $-1.2060\times10^{-3}$ \tabularnewline
$\partial{P_{F}}(\mathbf{d}_{0})/\partial{\mu_{13}}$  & $\ \ \ 9.1378\times10^{-5}$  & $\ \ \ 7.2857\times10^{-5}$  & $0.0$ \tabularnewline
$\partial{P_{F}}(\mathbf{d}_{0})/\partial{\mu_{14}}$  & $\ \ \ 2.3126\times10^{-4}$  & $\ \ \ 2.0942\times10^{-4}$  & $\ \ \ 1.3000\times10^{-4}$ \tabularnewline
$\partial{P_{F}}(\mathbf{d}_{0})/\partial{\mu_{15}}$  & $-6.3125\times10^{-4}$  & $-6.2761\times10^{-4}$  & $-5.8333\times10^{-4}$ \tabularnewline
$\partial{P_{F}}(\mathbf{d}_{0})/\partial{\mu_{16}}$  & $\ \ \ 2.2333\times10^{-4}$  & $\ \ \ 2.2261\times10^{-4}$  & $\ \ \ 1.3333\times10^{-4}$ \tabularnewline
$\partial{P_{F}}(\mathbf{d}_{0})/\partial{\mu_{17}}$  & $-3.0844\times10^{-5}$  & $-3.9551\times10^{-5}$  & $0.0$ \tabularnewline
$\partial{P_{F}}(\mathbf{d}_{0})/\partial{\mu_{18}}$  & $-2.0729\times10^{-4}$  & $-2.6582\times10^{-4}$  & $-8.8000\times10^{-4}$ \tabularnewline
$\partial{P_{F}}(\mathbf{d}_{0})/\partial{\mu_{19}}$  & $-3.5881\times10^{-3}$  & $-3.4714\times10^{-3}$  & $-3.2900\times10^{-3}$ \tabularnewline
$\partial{P_{F}}(\mathbf{d}_{0})/\partial{\mu_{20}}$  & $-4.1604\times10^{-3}$  & $-4.0774\times10^{-3}$  & $-3.2200\times10^{-3}$ \tabularnewline
$\partial{P_{F}}(\mathbf{d}_{0})/\partial{\mu_{21}}$  & $-7.7002\times10^{-4}$  & $-7.2830\times10^{-4}$  & $-8.5000\times10^{-4}$ \tabularnewline
No. of FEA & $3445$ & $10^{7}$ & $22\times10^{7}$\tabularnewline
\hline
\end{tabular}
\renewcommand*\arraystretch{1.0}
\par\end{centering}
\end{table}

It is important to recognize that the PDD-SPA method can be applied
to solve this series-system reliability problem by interpreting the
failure domain as $\Omega_{F}:=\{\mathbf{x}:y_{s}(\mathbf{x})<0\}$,
where $y_{s}(\mathbf{X}):=\min\{y_{1}(\mathbf{X}),y_{2}(\mathbf{X})\}$
and then constructing a PDD approximation of $y_{s}(\mathbf{X})$.
In doing so, however, $y_{s}$ is no longer a smooth function of $\mathbf{X}$,
meaning that the convergence properties of the PDD-SPA method can
be significantly deteriorated. More importantly, the PDD-SPA method
is not suitable for a general system reliability problem involving
multiple, interdependent component performance functions. This is
the primary reason why the results of the PDD-SPA method are not included
in this example.

\subsection{Example 6: A Three-Hole Bracket}

The final example involves robust shape design optimization of a two-dimensional,
three-hole bracket, where nine random shape parameters, $X_{i}$,
$i=1,\cdots,9$, describe its inner and outer boundaries, while maintaining
symmetry about the central vertical axis. The design variables, $d_{k}={\displaystyle \mathbb{E}_{\mathbf{d}}}[X_{k}]$,
$k=1,\cdots,$9, are the means of these independent random variables,
with Figure \ref{fig:7ex6}(a) depicting the initial design of the
bracket geometry at the mean values of the shape parameters. The bottom
two holes are fixed, and a deterministic horizontal force $F=15,000$
N is applied at the center of the top hole. The bracket material has
a deterministic mass density $\rho=7810$ $\mathrm{kg}/\mathrm{m}^{3}$,
deterministic elastic modulus $E=207.4$ GPa, deterministic Poisson's
ratio $\nu=0.3$, and deterministic uniaxial yield strength $S_{y}=800$
MPa. The objective is to minimize the second-moment properties of
the mass of the bracket by changing the shape of the geometry such
that the maximum von Mises stress $\sigma_{e,\max}(\mathbf{X})$ does
not exceed the yield strength $S_{y}$ of the material with 99.875\%
probability if $y_{1}$ is Gaussian. Mathematically, the RDO problem
is defined to
\begin{equation}
\begin{array}{rcl}
{\displaystyle \min_{\mathbf{d}\in\mathcal{D}}c_{0}(\mathbf{d})} & = & {\displaystyle 0.5{\displaystyle \frac{{\displaystyle \mathbb{E}_{\mathbf{d}}}\left[y_{0}(\mathbf{X})\right]}{{\displaystyle \mathbb{E}_{\mathbf{d}_{0}}}\left[y_{0}(\mathbf{X})\right]}}+0.5\frac{\sqrt{\mathrm{var}_{\mathbf{d}}\left[y_{0}(\mathbf{X})\right]}}{\sqrt{\mathrm{var}_{\mathbf{d}_{0}}\left[y_{0}(\mathbf{X})\right]}},}\\
\mathrm{subject\: to\:}c_{1}(\mathbf{d}) & = & {\displaystyle 3\sqrt{\mathrm{\mathrm{var}_{\mathbf{d}}}\left[y_{1}(\mathbf{X})\right]}-{\displaystyle \mathbb{E}_{\mathbf{d}}}\left[y_{1}(\mathbf{X})\right]\le0},\\
\\
 &  & 0\;\mathrm{mm}\le d_{1}\le14\;\mathrm{mm},\;17\;\mathrm{mm}\le d_{2}\le35\;\mathrm{mm},\\
 &  & 10\;\mathrm{mm}\le d_{3}\le30\;\mathrm{mm},\;30\;\mathrm{mm}\le d_{4}\le40\;\mathrm{mm},\\
 &  & 12\;\mathrm{mm}\le d_{5}\le30\;\mathrm{mm},\;12\;\mathrm{mm}\le d_{6}\le30\;\mathrm{mm},\\
 &  & 50\;\mathrm{mm}\le d_{7}\le140\;\mathrm{mm},\;-15\;\mathrm{mm}\le d_{8}\le10\;\mathrm{mm},\\
 &  & -8\;\mathrm{mm}\le d_{9}\le15\;\mathrm{mm},
\end{array}\label{rdo}
\end{equation}
where $\mathbf{d}=(d_{1},\cdots,d_{9})\in\mathcal{D}\subset\mathbb{R}^{9}$
is the design vector;
\begin{equation}
y_{0}(\mathbf{X})=\rho\int_{\mathcal{D}'(\mathbf{X})}d\mathcal{D}'\label{44}
\end{equation}
and
\begin{equation}
y_{1}(\mathbf{X})=S_{y}-\sigma_{e,\max}(\mathbf{X})\label{45}
\end{equation}
are two random response functions; ${\displaystyle \mathbb{E}_{\mathbf{d}}}[y_{0}(\mathbf{X})]$
and ${\displaystyle \mathrm{var}_{\mathbf{d}}}[y_{0}(\mathbf{X})]:={\displaystyle \mathbb{E}_{\mathbf{d}}}[y_{0}(\mathbf{X})-{\displaystyle \mathbb{E}_{\mathbf{d}}}[y_{0}(\mathbf{X})]]^{2}$
are the mean and variance, respectively, of $y_{0}$ at design $\mathbf{d}$;
and ${\displaystyle \mathbb{E}_{\mathbf{d}}}[y_{1}(\mathbf{X})]$
and ${\displaystyle \mathrm{var}_{\mathbf{d}}}[y_{1}(\mathbf{X})]:={\displaystyle \mathbb{E}_{\mathbf{d}}}[y_{1}(\mathbf{X})-{\displaystyle \mathbb{E}_{\mathbf{d}}}[y_{1}(\mathbf{X})]]^{2}$
are the mean and variance, respectively, of $y_{1}$ at design $\mathbf{d}$.
The initial design $\mathbf{d}_{0}=\{0,30,10,40,20,20,75,0,0\}^{T}$ mm.
Figure \ref{fig:7ex6}(b) portrays the contours of the von Mises stress
calculated by the FEA of the initial bracket design, which comprises
11,908 nodes and 3914 eight-noded quadrilateral elements. A plane
stress condition was assumed. The approximate optimal solution is
denoted by $\tilde{\mathbf{d}}^{*}=\{\tilde{d}_{1}^{*},\cdots,\tilde{d}_{9}^{*}\}^{T}$.
\begin{figure}[tbph]
\begin{centering}
\includegraphics[clip,scale=0.73]{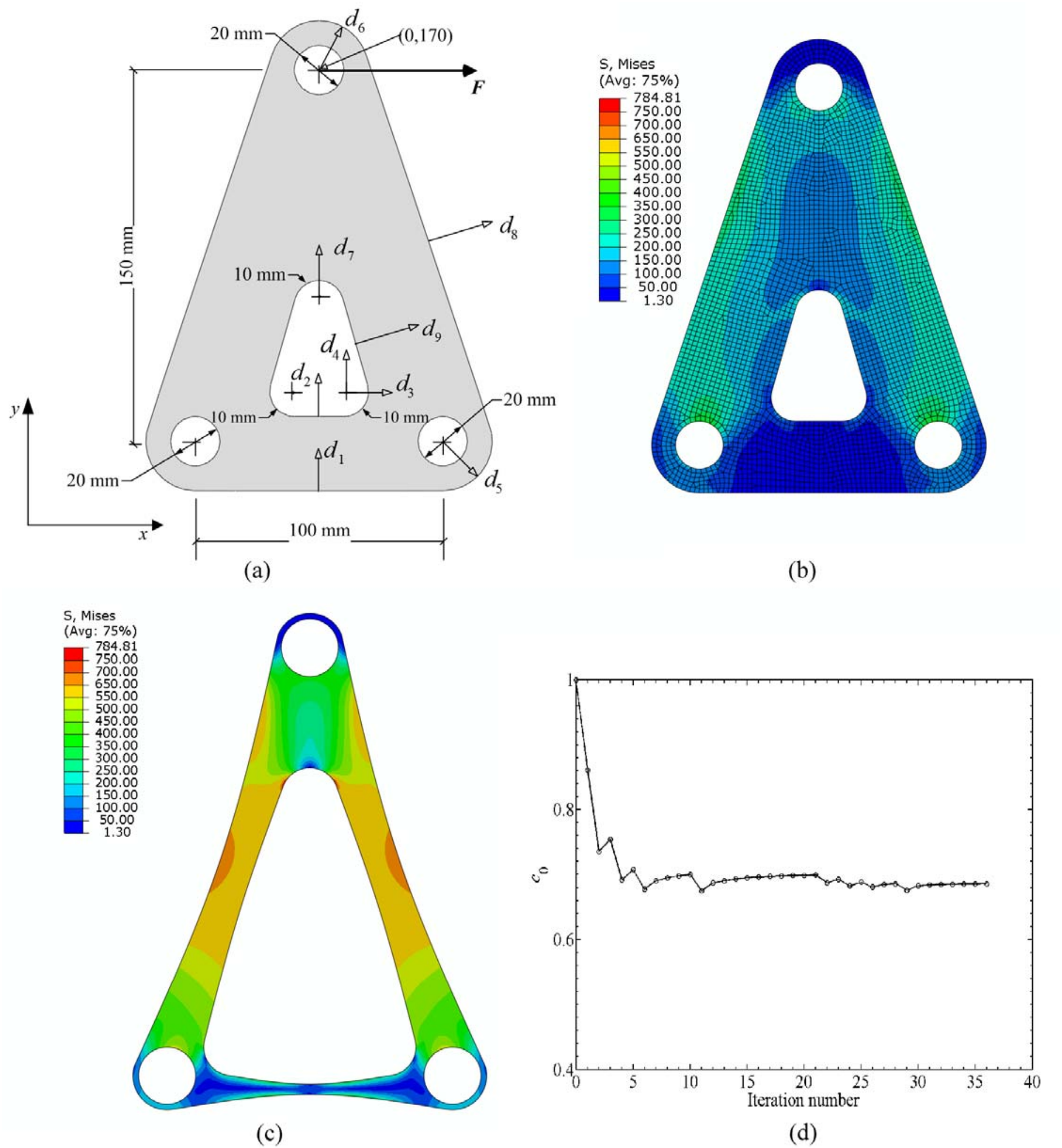}
\par\end{centering}
\caption{A three-hole bracket; (a) design parameterization; (b) von Mises stress
at the initial design; (c) von Mises stress at the final design; (d)
iteration history of the objective function (Example 6)}
\label{fig:7ex6}
\end{figure}

Due to their finite bounds, the random variables $X_{i}$, $i=1,\cdots,N$,
were assumed to follow truncated Gaussian distributions with densities
\begin{equation}
f_{X_{i}}(x_{i};\mathbf{d})=\left\{ \begin{array}{l@{\quad:\quad}l}
{\displaystyle \frac{1}{\Phi(D_{i})-\Phi(-D_{i})}}{\displaystyle \phi\left(\frac{x_{i}-d_{i}}{\sigma_{i}}\right)}\; & \alpha_{i}\le x_{i}\le\beta_{i},\\
0 & \mathrm{otherwise},
\end{array}\right.
\end{equation}
where $\sigma_{i}=0.2$; and $\alpha_{i}=d_{i}-D_{i}$
and $\beta_{i}=d_{i}+D_{i}$ are the lower and upper bounds, respectively,
of $X_{i}$. To avoid unrealistic designs, the bounds were chosen
as follows: $D_{i}=2$ for all $i=1,\cdots,9$. These conditions are
consistent with the bound constraints of design variables stated in
Equation \ref{rdo}.

A multi-point single-step PDD method \cite{ren12}, employing univariate
($S=1$), first-order ($m=1$) PDD approximation of the objective
and constraint functions and their design sensitivities from the proposed
method, was employed to solve this RDO problem. Since classical orthonormal
polynomials do not exist for truncated Gaussian distributions, again
the Stieltjes procedure was employed to determine the measure-consistent
orthonormal polynomials and corresponding Gauss quadrature formula
\cite{rahman09b}. The largest order $m'=2$ for the Fourier polynomial
expansions of the derivatives of log-density functions. The sensitivities
of the first two moments of $y_{0}(\mathbf{X})$ and $y_{1}(\mathbf{X})$,
required in the sequential quadratic optimization, were analytically
calculated from Equations \ref{mom1spdd2} and \ref{mom2spdd2}. Table
\ref{tab:7ex6-1} summarizes the optimization results, requiring 37
design iterations and 703 FEA to attain the final optimal design with
the corresponding mean shape presented in Figure \ref{fig:7ex6}(c).
The iteration history, depicted in Figure \ref{fig:7ex6}(d), indicates
rapid convergence due to accurate and efficient calculation of the
design sensitivities. Compared with the initial design in Figure \ref{fig:7ex6}(b),
the overall area of the optimal design has been substantially reduced,
mainly due to significant alteration of the inner boundary and moderate
alteration of the outer boundary of the bracket. All nine design variables
have undergone moderate to significant changes from their initial
values. The optimal mass of the bracket is $0.1207$ kg - about a
$65\%$ reduction from the initial mass of $0.3415$ kg. Due to robust design, the reduction of the mean is $65.1\%$, whereas the standard deviation diminishes by $4.4\%$. The smaller drop in the standard deviation is attributed to the objective function that combines both the mean and standard deviation of $y_{0}$.

\begin{table}[tbph]
\begin{centering}
\caption{Optimization results by the univariate PDD approximation ($S=1$,
$m=1$) (Example 6)}
\label{tab:7ex6-1}
\renewcommand*\arraystretch{1.2}
\begin{tabular}{ccc}
\hline
 & Initial design ($\mathbf{d}_{0}$) \  & Final design ($\mathbf{\tilde{d}}^{*}$) \tabularnewline
\hline
$\tilde{d}_{1}^{*}$, mm  & 0  & 13.4031 \tabularnewline
$\tilde{d}_{2}^{*}$, mm  & 30  & 17.0003 \tabularnewline
$\tilde{d}_{3}^{*}$, mm  & 10  & 27.1802 \tabularnewline
$\tilde{d}_{4}^{*}$, mm  & 40  & 30.0056 \tabularnewline
$\tilde{d}_{5}^{*}$, mm  & 20  & 12.0004 \tabularnewline
$\tilde{d}_{6}^{*}$, mm  & 20  & 12.0000 \tabularnewline
$\tilde{d}_{7}^{*}$, mm  & 75  & 118.035 \tabularnewline
$\tilde{d}_{8}^{*}$, mm  & 0  & -13.8359 \tabularnewline
$\tilde{d}_{9}^{*}$, mm  & 0  & 14.9785 \tabularnewline
$\tilde{c}_{0}(\tilde{\mathbf{d}}^{*})$  & 1  & 0.6858 \tabularnewline
$\tilde{c}_{1}(\tilde{\mathbf{d}}^{*})$ MPa  & -433.328  & -8.084 \tabularnewline
\hline
\end{tabular}
\renewcommand*\arraystretch{1.0}
\par\end{centering}
\end{table}

\section{Conclusions}

Three novel computational methods grounded in PDD were developed for
design sensitivity analysis of high-dimensional complex systems subject
to random input. The first method, capitalizing on a novel integration
of PDD and score functions, provides analytical expressions of approximate
design sensitivities of the first two moments that are mean-square
convergent. Applied to higher-order moments, the method also estimates
design sensitivities by two distinct options, depending on how the
high-dimensional integrations are performed. The second method, the
PDD-SPA method, integrates PDD, SPA, and score functions, leading
to analytical formulae for calculating design sensitivities of probability
distribution and component reliability. The third method, the PDD-MCS
method, also relevant to probability distribution or reliability analysis,
utilizes the embedded MCS of the PDD approximation and score functions.
Unlike the PDD-SPA method, however, the sensitivities in the PDD-MCS
method are estimated via efficient sampling of approximate stochastic
responses, thereby affording the method to address both component
and system reliability problems. Furthermore, the PDD-MCS method is
not influenced by any added approximations, involving calculations
of the saddlepoint and higher-order moments, of the PDD-SPA method.
For all three methods developed, both the statistical moments or failure
probabilities and their design sensitivities are determined concurrently
from a single stochastic analysis or simulation. Numerical results
from mathematical examples corroborate fast convergence of the sensitivities
of the first two moments. The same condition holds for the sensitivities
of the tails of probability distributions when orthonormal polynomials
are constructed consistent with the probability measure of random
variables. Otherwise, the convergence properties may markedly degrade
or even disappear when resorting to commonly used transformations.
For calculating the sensitivities of reliability, the PDD-MCS method,
especially its bivariate version, provides excellent solutions to
all problems, including a 100-dimensional mathematical function, examined.
In contrast, the PDD-SPA method also generates very good estimates
of the sensitivities, but mostly for small to moderate uncertainties
of random input. When the coefficient of variation is large, the PDD-SPA
method may produce inaccurate results, suggesting a need for further
improvements. Finally, a successful application on robust design optimization
of a three-hole bracket demonstrates the usefulness of the methods
developed.

The computational effort of the univariate PDD method varies linearly
with respect to the number of random variables and, therefore, the
univariate method is highly economical. In contrast, the bivariate
PDD method, which generally outperforms the univariate PDD method,
demands a quadratic cost scaling, making it also more expensive than
the latter method. Nonetheless, both versions of the PDD method are
substantially more efficient than crude MCS.

\ack The authors acknowledge financial support from the U.S. National Science
Foundation under Grant No. CMMI-0969044.


\begin{thebibliography}{99}
\bibitem{lecuyer94}L'Ecuyer P, Perron G. On the convergence rates
of IPA and FDC derivative estimators. \emph{Operations Research} 1994;
\textbf{42}(4):643--656.

\bibitem{glasserman91}Glasserman P. \emph{Gradient estimation via
perturbation analysis}, vol. 116. Kluwer Academic Pub, 1990.

\bibitem{rubinstein93}Rubinstein R, Shapiro A. \emph{Discrete event
systems: sensitivity analysis and stochastic optimization by the score
function method}. Wiley series in probability and mathematical statistics,
Wiley, 1993.

\bibitem{rahman09}Rahman S. Stochastic sensitivity analysis by dimensional
decomposition and score functions. \emph{Probabilistic Engineering
Mechanics} 2009; \textbf{24}(3):278--287.

\bibitem{hoeffding48}Hoeffding W. \textit{\emph{A class of statistics
with asymptotically normal distributions.}}\emph{ }\textit{Annals
of Mathematical Statistics} 1948; \textbf{19} (3):293-325.

\bibitem{efron81}Efron B, Stein C. The jackknife estimate of variance.
\emph{The Annals of Statistics} 1981; \textbf{9}(3):586--596.

\bibitem{sobol03}Sobol I. Theorems and examples on high dimensional
model representation. \emph{Reliability Engineering \& System Safety}
2003; \textbf{79}(2):187--193.

\bibitem{rahman12}Rahman S. Approximation errors in truncated dimensional
decompositions, accepted in \emph{Mathematics of Computation} 2013;.

\bibitem{bellman57}Bellman, R. \textit{Dynamic programming}. Princeton
University Press, Princeton, NJ, 1957.

\bibitem{caflisch97}Caflisch R, Morokoff W, Owen A. Valuation of
mortgage backed securities using Brownian bridges to reduce effective
dimension. \emph{Journal of Computational Finance} 1997; \textbf{1}:27--46.

\bibitem{huang12}Huang X, Zhang Y. Reliability--sensitivity analysis
using dimension reduction methods and saddlepoint approximations.
\emph{International Journal for Numerical Methods in Engineering}
2012;\textbf{ 93}(8):857--886.

\bibitem{daniels54}Daniels HE. Saddlepoint approximations in statistics.
\emph{The Annals of Mathematical Statistics} 1954; \textbf{25}(4):631--650.

\bibitem{xu04}Xu H, Rahman S. A generalized dimension-reduction method
for multidimensional integration in stochastic mechanics. \emph{International
Journal for Numerical Methods in Engineering} 2004; \textbf{61}(12):1992--2019.

\bibitem{rahman11}Rahman S. Decomposition methods for structural
reliability analysis revisited. \emph{Probabilistic Engineering Mechanics}
2011; \textbf{26}(2):357--363.

\bibitem{rahman08}Rahman S. A polynomial dimensional decomposition
for stochastic computing. \emph{International Journal for Numerical
Methods in Engineering} 2008; \textbf{76}(13):2091--2116.

\bibitem{rahman09b}Rahman S. Extended polynomial dimensional decomposition
for arbitrary probability distributions. \emph{Journal of Engineering
Mechanics-ASCE} 2009; \textbf{135}(12):1439--1451.

\bibitem{gautschi04}Gautschi W. \emph{Orthogonal polynomials: computation
and approximation}. Numerical mathematics and scientific computation,
Oxford University Press, 2004.

\bibitem{rahman10}Rahman S. Statistical moments of polynomial dimensional
decomposition. \emph{Journal of Engineering Mechanics-ASCE} 2010;
\textbf{136}(7):923--927.

\bibitem{lugannani80}Lugannani R, Rice S. Saddle point approximation
for the distribution of the sum of independent random variables. \emph{Advances
in applied probability} 1980; \textbf{12}(2):475--490.

\bibitem{yuen07}Yuen KV, Wang J, Au SK. Application of saddlepoint
approximation in reliability analysis of dynamic systems. \emph{Earthquake
Engineering and Engineering Vibration} 2007; \textbf{6}(4):391--400.

\bibitem{browder96}Browder A. \emph{Mathematical analysis: an introduction}.
Undergraduate texts in mathematics, Springer Verlag, 1996.

\bibitem{ren12}Ren X, Rahman S. Robust design optimization by polynomial
dimensional decomposition. \emph{Structural and Multidisciplinary
Optimization} 2012; \textbf{48}(1):127--148.

\bibitem{busbridge48}Busbridge I. Some integrals involving Hermite
polynomials. \emph{Journal of the London Mathematical Society} 1948;
\textbf{23}:135--141.

\bibitem{kleindienst93}Kleindienst H, Luchow A. Multiplication
theorems for orthogonal polynomials. \emph{International Journal of
Quantum Chemistry} 1993; \textbf{48}(4):239--247.

\bibitem{oakley04}Oakley JE, O'Hagan A. Probabilistic sensitivity
analysis of complex models: a Bayesian approach. \emph{Journal of
the Royal Statistical Society: Series B (Statistical Methodology)}
2004; \textbf{66}(3):751--769.

\end{thebibliography}
\end{document}